\newcommand{\pd}{\partial}
 \newcommand{\PP}{\mathbb{P}}  \newcommand{\cP}{\mathcal{P}}
 \newcommand{\B}{\mathcal{B}}  
 \newcommand{\I}{\mathcal{I}} \newcommand{\cJ}{\mathcal{J}}
 \newcommand{\W}{\mathcal{W}} \newcommand{\cO}{\mathcal{O}}
 \newcommand{\Z}{\mathbb{Z}}
 \newcommand{\C}{\mathbb{C}}  \newcommand{\cC}{\mathcal{C}}
 \newcommand{\bone}{\mathbf{1}}
\newcommand{\Mbar}{\overline{\mathcal M}}
 \DeclareMathOperator{\coeff}{coeff}
 \DeclareMathOperator{\vdim}{vdim}  \DeclareMathOperator{\Spec}{Spec}
 \DeclareMathOperator{\diag}{diag}  \DeclareMathOperator{\Li}{Li}
  \DeclareMathOperator{\Ch}{Ch}
\newcommand{\be}{\begin{equation}}
\newcommand{\ee}{\end{equation}}
\newcommand{\bea}{\begin{eqnarray}}
\newcommand{\eea}{\end{eqnarray}}
\newcommand{\ben}{\begin{eqnarray*}}
\newcommand{\een}{\end{eqnarray*}}
\newcommand{\half}{\frac{1}{2}}
\newtheorem{cor}{Corollary}[section] \newtheorem{conjecture}{Conjecture}
 \newtheorem{lem}[cor]{Lemma}
 \newtheorem{prop}[cor]{Proposition}
 \newtheorem{thm}[cor]{Theorem}
\theoremstyle{remark}
 \newtheorem{defn}[cor]{Definition}
 \newtheorem{ex}[cor]{Example}
 \newtheorem{rmk}[cor]{Remark}
 \newtheorem{conjdefn}[cor]{Conjecture/Definition}
\definecolor{A}{rgb}{.75,1,.75}
\begin{document}
\title[GV Invariants, Hilbert Schemes and Quasimodular Forms]
{Gopakumar-Vafa BPS Invariants, Hilbert Schemes and Quasimodular Forms. I.}
\author{Shuai Guo}
\address{Beijing International Center for
Mathematical Research\\Peiking University\\Beijng, 100871, China}
\email{gs0202@gmail.com}
\author{Jian Zhou}
\address{Department of Mathematical Sciences\\Tsinghua University\\Beijng, 100084, China}
\email{jzhou@math.tsinghua.edu.cn}

\begin{abstract}
We prove a closed formula for leading Gopakumar-Vafa BPS invariants of local Calabi-Yau geometries given by the canonical line bundles of toric Fano surfaces.
It shares some similar features with G\"ottsche-Yau-Zaslow formula:
Connection with Hilbert schemes, connection with quasimodular forms,
and quadratic property after suitable transformation.
In Part I of this paper we will present the case of projective plane,
more general cases will be presented in Part II.
\end{abstract}

\maketitle

 \section{Introduction}

The problem of counting curves in algebraic varieties  dates back
to the nineteenth century.
Classical examples include the famous 27 lines on a cubic surface
and the 2875 lines on a quintic 3-fold.
Through the interaction with string theory,
spectacular progresses in this classical branch of algebraic geometry
have been made over the years since 1990's.
First of all,
Gromov-Witten theory and its various variants have laid the foundation
for the modern treatment of many classical enumerative problems
and greatly expanded the range of enumerative problems being considered.
See \cite{PT} for an introduction to some of the exciting developments.

The study of the problems of counting curves in surfaces and Calabi-Yau 3-folds
have served both as motivations and applications of Gromov-Witten theory.
We will recall some results in these directions in \S \ref{sec:Motivations1} and \S \ref{sec:Motivations2}
to serve as motivations for this work.
We emphasize on the following three salient features for the curving counting problems
for algebraic surfaces:
connection with Hilbert schemes,
connection with quasimodular forms,
and quadratic properties of the node polynomials after
suitable transformation.
Our main results   will show that these features are also shared
in the curve counting problems of some noncompact Calabi-Yau 3-folds,
arising as the the total space of the canonical line bundles
of toric Fano surfaces.

From their constructions,
Gromov-Witten invariants are rational numbers in general.
It is amazing that for Calabi-Yau 3-folds,
physicists \cite{GV} have suggested a way
to convert them into integer invariants,
called the Gopakumar-Vafa BPS invariants.
These invariants in general are not the ``numbers" of algebraic curves of some fixed degree and genus,
embedded in the Calabi-Yau 3-folds,
but serve as a useful alternative.
Closed formulas for these invariants are very desirable to find,
our result adds an item to the very short list of examples known at present.

For compact Calabi-Yau 3-folds such as the quintic 3-fold,
mathematical computations of their Gromov-Witten invariants are available only in geneus 0 \cite{Giv, LLY} and genus 1 \cite{Zin},
based on the string theorists' prediction using mirror symmetry \cite{CDGP, BCOVK}.
Unfortunately it is not known at present how to generalize the computations
to higher genera (see Chang-Li \cite{CL} for some recent progresses.)

On the other hand,
in the noncompact setting
a method to compute the Gromov-Witten invariants of toric Calabi-Yau 3-folds
called the topological vertex \cite{AKMV} has been developed by string theorists,
based on duality with Chern-Simons theory \cite{Wit1, Wit2, Gop-Vaf, Oog, Mar-Vaf}.
A mathematical theory of the topological vertex has been developed in \cite{LLLZ}
to justify this method.
Based on this method the Gopakumar-Vafa integrality has been established
in this case in \cite{P, Ko1, Ko2}.
In this paper we will study the Gopakumar-Vafa invariants of
the Calabi-Yau 3-folds that are the total spaces of the canonical lines bundle of
toric Fano surfaces.
In this case the duality of their Gromov-Witten invariants with Chern-Simons link invariants
was more straightforward and was developed in \cite{AMV, I},
and its mathematical proof was presented in \cite{zhou3}.
As mentioned in \cite{AMV},
the complexity of computations increases very fast:
For degree $d=12$,
it involves evaluating $18239$ terms,
while for degree $20$ involves $943304$ terms.
It turns out that the colored HOMFLY polynomials play a key role in the computations.
They can be given by some specializations of the skew Schur functions,
and this leads us to the theory basic hypergeometric series
and some simple results in that theory can be applied.
This is one of the key useful technical tools in our work.

As mentioned above,
it is very interesting to compare our results with the corresponding results on algebraic surfaces \cite{Yau-Zas, Got}.
They share many common features,
and their similarities and differences may shed some lights on each other.
First of all,
both cases involve the  theory of quasimodular forms \cite{Kan-Zag},
which first appeared in the counting problems on elliptic curves \cite{Dij}.
Secondly,
both cases involve Hilbert schemes of points on surfaces.
Thirdly,
both involve polynomials that after suitable transformations become quadratic.

Let us present here our result for the special case of $X = \kappa_{\PP^2} = \cO_{\PP^2}(-3)$,
the total space of the canonical line bundle of $\PP^2$,
to give a sample of our results.
In an earlier work by the second author \cite{zhou1},
it was observed based on the table in \cite{AMV} that after a suitable transformation,
the Gopakumar-Vafa invariants of $\kappa_{\PP^2}$ become quadratic
polynomials in the degree $d$:
\bea
&& M^0_d = \frac{1}{2}(d^2+3d+2), \label{eqn:M0} \\
&& M^1_d = \frac{1}{2} (d^2+3d-4), \;\; (d \geq 3) \\
&& M_d^2 = \frac{3}{2} (d^2 +3d-6), \;\;\; (d \geq 4) \\
&& M_d^3 = 3(d^2+3d)-24, \;\;\; (d \geq 5) \\
&& M_d^4 = 6 (d^2+3d-11), \;\; (d \geq 6) \\
&& M_d^5 = \frac{21}{2}(d^2+3d)-144, \;\;\; (d \geq 7) \\
&& M_d^6 = 20(d^2+3d-16), \;\;\; (d \geq 8) \\
&& M_d^7 = \frac{67}{2}(d^2+3d)-626, \;\;\; (d \geq 9) \\
&& M_d^8 = \frac{117}{2}(d^2+3d)-1233, \;\;\; (d \geq 10) \label{eqn:M8}
\eea
It was conjectured in \cite{zhou1} that in general for $d \geq m+2$,
\ben
&& M^m_d = \frac{a(m)}{2}(d^2+3d)-b(m)
\een
for some positive integers $a(m)$ and $b(m)$.
(Similar observations were also made for $\kappa_{\PP^1 \times \PP^1}$.)
In this paper we will prove this conjecture,
and obtain the following closed formula for $a(m)$ and $b(m)$ in terms
of quasimodular forms (cf. Theorem \ref{thm:Mdelta}):
\begin{multline} \label{eqn:P2Case}
\frac{1}{(1-q)^2(1-q^2)}\sum_{m=0}^\infty M^m_d q^m \\
= \frac{1}{\prod_{n=1}^\infty (1-q^n)^3}
\biggl( \frac{(d+1)(d+2)}{2}
- 3 G_2(q) \biggr),
\end{multline}
where $G_2(q) = \sum_{n=1}^\infty \sum_{d|n} d q^n$ is the second Einsenstein series.
The right-hand side of the above formula has a striking similarity with
the Yau-Zaslow formula \cite{Yau-Zas}.
We will also show that \eqref{eqn:P2Case} verifies the predictions
made by Katz-Klemm-Vafa \cite{KKV} based on M-theory.
In this paper,
we will also show that our method can be used to obtain some results
not predicted by \cite{KKV}
and not observed in \cite{zhou1}.
In Theorem \ref{thm:Mdelta2} we prove the following formula holds for $0 \leq m \leq 2d-5$:
\be
\begin{split}
& M^m_d  = \coeff\biggl(\frac{[1]^2[2]}{[\infty]!^3} \biggl(\binom{d+2}{2}
- 3 \sum_{i\geq 1} \frac{q^{i}}{(1-   q^{i})^2}\biggr)\\
&- \frac{3\cdot q^{d-1}[2][3]}{[\infty]!^3}\biggl(\binom{d+1}{2}
- 3 \sum_{i\geq 1} \frac{q^{i}}{(1-   q^{i})^2}
-3\frac{q^{3}}{1-q^{3}} \biggr),q^m \biggr).
\end{split}
\ee
Based on such results we will also make some observations and speculations
about higher order corrections:
The quadratic property of the transformed GV invariants seems to persist
also in higher order corrections.

We now explain the title of this paper.
On the left-hand-side of \eqref{eqn:P2Case} we have the generating series
for transformed Gopakumar-Vafa invariants,
on the right-hand side,
\be
\frac{1}{\prod_{n=1}^\infty (1-q^n)^3} = \sum_{n=0}^\infty \chi((\PP^2)^{[n]}) q^n,
\ee
where $(\PP^2)^{[n]}$ is the Hilbert schemes of zero dimensional subschemes
of length $n$ in $\PP^2$.
Also on the right-hand, $G_2$ is a quasimodular form.

Furthermore,
the term $\frac{(d+1)(d+2)}{2}$ on the right-hand side has the combinatorial
meaning of number of lattice points in the triangle that describes
the toric geometry of $\PP^2$.
In Part II of this paper,
we will present the same type of formulas
for all toric Fano surfaces.
In this general formula,
the meaning of each term on the right-hand side will become
more transparent.
More precisely,
let $S$ be a toric Fano surface and $X = \kappa_S$.
For $\beta \in H_2(S;\Z)$,
there is a $\Z$-valued function $P^S(\beta)$ quadratic in $\beta$
such that the generating series of the leading transformed Gopakumar-Vafa
invariants is given by:
\be
\frac{[1]^2[2]}{[\infty]!^{e(S)}} \cdot
\biggl( P^S(\beta) - e(S) \cdot
\sum_{i\geq 1} \frac{q^{i}}{(1-   q^{i})^2}\biggr).
\ee
The combinatorial meaning of $P^S(\beta)$ is the number of lattice
points in the integral lattices associated to $S$,
with size controlled by $\beta$.

We work with Gromov-Witten invariants of noncompact Calabi-Yau 3-folds in this paper.
It is also very interesting to compare our results with the corresponding results
for compact Calabi-Yau 3-fold such as the quintic 3-fold.
We hope our results can have some counterparts in the compact case.
In a work in progress,
we extend our results to open string invariants.

The rest of this paper is arranged as follows.
In Section 2 we recall some work on counting curves in algebraic surfaces.
In Section 3 we recall the definition of Gopakumar-Vafa invariants
and the observations on the quadratic properties of
the transformed Gopakumar-Vafa invariants.
In Section 4 we recall some technical preliminaries.
In Section 5 we will prove  \eqref{eqn:P2Case}
and show that it matches with the prediction by Katz-Klemm-Vafa \cite{KKV},
we will also make a comparison of this result with the corresponding result
for counting curves in $\PP^2$.
In Section 6 we refine our approach to get stronger results not predicted
in the literature and make more observations on the general behavior of
the transformed Gopakumar-Vafa invariants of $\kappa_{\PP^2}$.

\vspace{.1in}
{\em Acknowledgements}.
The second author is partially supported by  NSFC grant 1171174.
The first author thanks Professor Gang Tian for helpful suggestions.

\section{Counting curves in algebraic surfaces}
\label{sec:Motivations1}

In this section we recall some results  on counting curves in a linear system on an algebraic surface,
especially on a K3 surface.
Even though we will not use the results and methods in this section,
the content of this section will suggest a suitable perspective
to understand our results.
We will see that the following three  features will emerge
also in the Calabi-Yau 3-fold setting that we will study in this paper:
Connection with quasimodular forms,
connection with Hilbert schemes,
quadratic property of node polynomials after some transformation.

\subsection{Counting curves in the projective plane}

 Let $N_d$ be the number of  irreducible rational curves in $\PP^2$
of degree $d$ through $3d-1$ points in general position.
Then by an application of the WDVV equation Kontsevich \cite{K-M} proved
the following recursive formula:
\ben
&& N_1 = 1, \\
&& N_d = \sum_{\substack{d_1+d_2 =d\\d_1,d_2\geq 1}} N_{d_1}N_{d_2}
\biggl[d_1^2d_2^2\binom{3d-4}{3d_1-2} - d_1^3d_2 \binom{3d-4}{3d_1-1}\biggr], \;\;\; d> 1.
\een
The genus formula states that an irreducible algebraic curve in $\PP^2$ of degree $d$
with only $\delta$ nodes as singularities has geometric genus
\be
g = \frac{(d-1)(d-2)}{2} - \delta.
\ee
So the irreducible rational curves of degree $d$ must have $g(d) = \frac{(d-1)(d-2)}{2}$ nodes.
For $0 \leq \delta \leq g(d)$,
denote by $N^{g(d) - \delta}_d$ the number of irreducible plane curves of degree $d$,
genus $g=g(d) - \delta$,
with $\delta$ simple nodes, through $3d-1+g$ points in general position.
For example \cite{Kle-Pie, Vai}:
\ben
N^{g(d)-1}_d & = & 3(d-1)^2, \;\; d \geq  3, \\
N^{g(d)-2}_d & = & \frac{3}{2}(d-1)(d-2)(3d^2-3d-11), \;\; d \geq 4, \\
N^{g(d)-3}_d & = & \frac{9}{2}d^6 - 27d^5 +\frac{9}{2}d^4
+\frac{423}{2}d^3 - 229d^2- \frac{829}{2}d \\
& + & 525 - \delta_{d,4}\binom{11}{2}, \;\; d \geq 4, \\
N^{g(d)-4}_d & = & \frac{27}{8}d^8 - 27d^7 + \frac{1809}{4}d^5 - 642d^4 - 2529d^3 \\
& + & \frac{37881}{8}d^2 + \frac{18057}{4}d - 8865 -\delta_{d,5}\binom{16}{2}, \;\;\; d \geq 5.
\een
The following observation was made in \cite{DiF-Itz}:
For fixed $\delta$,
when $d$ is large enough, $N^{g(d)-\delta}_d$ is a polynomial in $d$ of degree $2\delta$:
\ben
z_{\delta}(d) & = &
\frac{3^\delta}{\delta!} \biggl[d^{2\delta} - 2\delta d^{2\delta-1}
+ \frac{\delta(4 - \delta)}{3}d^{2\delta-2} \\
& + & \frac{\delta(\delta - 1)(20\delta - 13)}{6}d^{2\delta-3} \\
& - & \frac{\delta(\delta - 1)(69\delta^2 - 85\delta + 92)}{54}d^{2\delta-4}
+ \cdots \biggr].
\een
See Fomin-Mikhalkin \cite{F-M} for a proof.

\subsection{Counting rational nodal curves in K3 surfaces}

Let $C$ be a smooth curve in a generic K3 surface $X$ representing a primitive homology class,
with $C \cdot C = 2n-2$.
Then $C$ has genus $n$ and move in a complete linear system $|C| \cong \PP^n$.
By a result of Chen \cite{Che},
all the rational curves in  $|C|$ are nodal,
hence by the genus formula,
they all have $n$ nodes.
Denote by $N(n)$ the number of such curves in $|C|$.
Then Yau-Zaslow formula \cite{Yau-Zas} is
\be \label{eqn:YauZaslow}
\sum_{n \geq 0} N(n) q^{n-1} = \frac{1}{\Delta(q)}
= \frac{1}{q\prod_{n=1}^\infty (1-q^n)}.
\ee
Let us briefly outline the beautiful argument of Yau and Zaslow,
which has been completed into a mathematical proof by  \cite{Bea}
under suitable conditions to be specified below.
Look at the compactified universal Jacobian $\pi:  \cJ \to |C|$ for the linear system $|C|$.
Under the assumption that
If one assumes that all the curves in $|C|$ are reduced and irreducible,
$\cJ$ is a smooth hyperk\"ahler manifold of dimension $2n$,
and $\cJ$ is birationally equivalent to the Hilbert scheme $X^{[n]}$.
Under the  assumption
that each member in the linear system $|C|$ has at most nodal singularities,
then one can argue that
\be
N_0(n) = \chi(\cJ).
\ee
Now by a result of Batyrev \cite{Bat},
\be
\chi(\cJ) q^{n-1} =  \chi(X^{[n]}) q^{n-1}.
\ee
By a result of G\"ottsche \cite{Got},
\be
 \sum_{n=0}^\infty \chi(X^{[n]}) q^{n-1}
= \frac{1}{q\prod_{n=1}^{\infty} (1-q^n)^{\chi(X)}}= \frac{1}{\Delta(q)}.
\ee
Hence \eqref{eqn:YauZaslow} is obtained by combining  the three equalities above.

\subsection{Counting nodal curves on algebraic surfaces}

Let $L$ be a line bundle on a projective algebraic surface $S$.
Denote by  $t_\delta^S(L)$ of the numbers of $\delta$-nodal curves in a general $\delta$-dimensional
sub-linear system of $|L|$.
The following result was conjectured  by G\"ottsche \cite{Got}
and proved by Liu \cite{Liu} and Tzeng \cite{Tz}:
For every integer $\delta \geq  0$,
there exists a universal polynomial $T_\delta(x, y, z, t)$ of degree $\delta$ with the following property:
Given a smooth projective
surface S and a $(5\delta-1)$-very ample ($5$-very ample if $\delta = 1$) line bundle $L$ on S,
$$t_\delta^S(L) = T_r(L^2,L\kappa_S, c_1(S)^2, c_2(S)).$$
Kool-Shende-Thomas \cite{Koo-She-Tho} gave a different proof
and weakened the condition to $L$ being $\delta$-very ample.

Inspired by the Yau-Zaslow formula,
G\"ottsche \cite{Got} conjectured the following closed form of this generating function:
There exist universal power series $B_1(q)$ and $B_2(q)$ such that
\be \label{eqn:GYZ}
\begin{split}
& \sum_{\delta \geq 0}
T_\delta(L^2,L\cdot \kappa_S, c_1(S)^2, c_2(S)) \cdot (DG_2(q))^\delta \\
= & \frac{(DG_2(q)/q)^{\chi(L)}B_1(q)^{K^2_S} B_2(q)^{L\cdot \kappa_S}}
{(\Delta(q)D^2G_2(q)/q^2)^{\chi(\cO_S)/2}},
\end{split}
\ee
where $ D = q \frac{d}{d q}$,
$G_2$ is the second Eisenstein series
$$G_2(q) = -\frac{1}{24} + \sum_{n>0} \sum_{d|n} d \cdot q^n.$$
This formula is called the G\"ottsche-Yau-Zaslow formula.
See Liu \cite{Liu} and Tzeng \cite{Tz} for proofs.

For a K3 surface $S$,
G\"ottsche \cite{Got} conjectured:
\be
\sum_{l \in \Z} n^S_\delta(l) q^l
= \frac{(DG_2(q))^{\delta}}{\Delta(q)},
\ee
where for $L$ sufficiently ample $n^S_{\delta}(L^2/2)$ stands for
the number of $\chi(L) - \delta-1$-nodal curves in a $\delta$-dimensional sub-linear system of $L$.
See Bryan-Leung \cite{Bry-Leu} and Liu \cite{Liu} for proofs.
For Gromov-Witten theoretical approach to this problm,
see the work by Klemm {\em et al} \cite{KMPS}.

G\"ottsche \cite{Got} made a connection to Hilbert schemes of $S$
in a different way from that of Yau and Zaslow \cite{Yau-Zas}.
Let $Z_n(s) \subset S \times S^{[n]}$ be the universal family with projections $p_n: Z_n(S) \to S$,
$q_n: Z_n(S) \to S^{[n]}$.
Then for any line bundle $L$ on $S$,
$L_n:=(q_n)_*p_n^*L$ is a vector bundle of rank $n$ on $S^{[n]}$.
Let $S_2^\delta \subset S^{[3\delta]}$ be the closure of the locally closed subset
$$S_{2,0}^\delta
= \biggl\{ \coprod_{i=1}^\delta \Spec(\cO_{S,x_i}/m_{S,x_i}^2) \biggl| x_1, \dots, x_{\delta}
\text{\;\; are distinct points on $S$} \biggr\}.$$
G\"ottsche showed that when $L$ is $(5\delta-1)$-very ample
($5$-very ample if $\delta = 1$),
then a general $\delta$-dimensional sublinear system of $|L|$ contains exactly
\be
d_\delta(L) = \int_{S_2^\delta} c_{2\delta}(L_{3\delta})
\ee
curves with precisely $\delta$ nodes as singularities.
This connection is crucial to Tseng's proof of the G\"ottsche-Yau-Zaslow formula.
Another connection with Hilbert schemes was made in \cite{KST}.

\subsection{Quadratic property of node polynomials after transformation}
\label{sec:SurfacesQuad}

The Severi degree $N^{d, \delta}$ is the number of plane curves of degree $d$ with $\delta$ nodes
passing through $(d^2+3d)/2-\delta$ general points.
G\"ottsche \cite{Got} conjectured that for $\delta \leq 2d-2$,
$N^{d,\delta} = t_{\delta}^{\PP^2}(\cO(d))$.
He also conjectured for $d > 0$,
\be
\sum_{\delta} N^{d, \delta} x^\delta
= \exp (d^2 C_1(x) + d C_2(x) + C_3(x))
\ee
modulo the ideal generated by $x^{2d-1}$.
In particular,
for $\delta \leq 2d-2$,
the numbers $N^{d, \delta}$ are given by a polynomial $z_\delta(d)$ (called the node polynomials) of degree $2\delta$ in $d$,
as conjectured by DiFrancesco-Itzykson \cite{DiF-Itz}
and proved by Fomin-Mikhalkin \cite{F-M}.
Kleiman and Piene \cite{Kle-Pie2}  made a slightly different formulation of
G\"ottsche's conjecture as follows (here we use the version of Fomin-Mikkhalkin \cite{F-M}).
Define polynomials $A_j(d)$ by:
\be
\sum_{j=1}^\infty A_j(d) \frac{x^j}{j}
= \log \sum_{\delta} z_{\delta}(d) x^d.
\ee
Then $A_j(t)$ are quadratic polynomials with integral coefficients.
Kleiman and Piene \cite{Kle-Pie2} established this for $j \leq 8$:
\ben
&& A_1(d) = 3(d^2 -2d +1), \\
&& A_2(d) = - 3(14d^2 -39 d + 25), \\
&& A_3(d) = 3(230d^2 - 788d + 633), \\
&& A_4(d) = -9(1340d^2 - 5315d + 5023), \\
&& A_5(d) = 9(24192d^2 - 107294d + 114647), \\
&& A_6(d) = -9(445592d^2 - 2161292d + 2545325), \\
&& A_7(d) = 54(1386758d^2 - 7245004d + 9242081), \\
&& A_8(d) = -9(156931220d^2 - 873420627d + 1191950551).
\een
Recently,
building on ideas of Fomin and Mikhalkin \cite{F-M},
Block \cite{Blo} developed an explicit algorithm for computing node polynomials,
and used it to compute $N_\delta(d)$ for $\delta \leq 14$,
hence verified the above conjecture and found $A_j(d)$ up to $j = 14$.

Note the polynomials $A_1, A_2, \dots$ are obtained from the node polynomials $z_1, z_2, \dots$
by the following transformation:
\ben
&& A_1 = z_1, \\
&& A_2 = 2z_2 - z_1^2, \\
&& A_3 = 3z_3 - 3z_1z_2 + z_1^3,
\een
in general,
\ben
A_n = n \sum_{\substack{m_1, \cdots, m_k \geq 0 \\ \sum_j jm_j = n}}
(-1)^{m_1+\cdots + m_k-1} \frac{(\sum_j m_j-1)!}{m_1!\cdots m_k!} z_1^{m_1} \cdots z_k^{m_k}.
\een
Note the coefficients on the right-hand side are all integers.

\section{Counting Curves in Calabi-Yau 3-folds}
\label{sec:Motivations2}

In this section we first recall in \S \ref{sec:Quintic} some results for curve counting
in quintic Calabi-Yau 3-fold.
Then we review the definition of the Gopakumar-Vafa invariants in \S \ref{sec:GV},
and review in \S \ref{sec:Transform} the observations made in \cite{zhou1}
about the quadratic property mentioned in the Introduction of the Gopakumar-Vafa invariants
under a suitable transformation.

\subsection{Gromov-Witten invariants of  quintic 3-fold} \label{sec:Quintic}

Let $X_5$ be the quintic 3-fold,
and let
\ben
K^g_d = \int_{[\Mbar_{g,0}(X_5;d)]} 1
\een
be degree $d$ genus $g$ Gromov-Witten invariants of $X$.
The calculations of such invariants are very difficult
and predictions made by physicists have played a crucial role
in the mathematical research on them.
Let
\be
\sum_{j=0}^\infty I_j(t) w^j
= e^{wt} \sum_{d=0}^\infty e^{dt} \frac{\prod_{r=1}^{5d} (5w+r)}{\prod_{r=1}^d (w+r)^5}.
\ee
For example,
\ben
I_0(t) & = & 1 + \sum_{d =1}^\infty e^{dt} \frac{(5d)!}{(d!)^5}, \\
I_1(t) & = & t I_0(t)  + 5 \sum_{d=1}^\infty e^{dt} \frac{(5d)!}{(d!)^5}
\sum_{r=d+1}^{5d} \frac{1}{r}, \\
I_2(t) &  = & \frac{t^2}{2}I_0(t) + 5 t \sum_{d=1}^\infty e^{dt} \frac{(5d)!}{(d!)^5}
\sum_{r=d+1}^{5d} \frac{1}{r} \\
& + & \frac{1}{2}\sum_{d=1}^\infty e^{dt} \frac{(5d)!}{(d!)^5}
\biggl( 25 \biggl(\sum_{r=d+1}^{5d} \frac{1}{r} \biggr)^2
- \sum_{r=1}^{5d} \frac{25}{d^2} + \sum_{r=1}^{d} \frac{5}{r^2} \biggr).
\een
By the Frobenius method,
it is easy to see that
$\{I_0, I_1, I_2, I_3\}$ form a basis of solutions to the Picard-Fuchs equation
\ben
\pd_t^4 f = 5e^t (5\pd_t+1) \cdots (5\pd_t +4) f(t).
\een
Let
\be
J_j(t) = \frac{I_j(t)}{I_0(t)},
\ee
and
\be
T = J_1(t).
\ee
Givental \cite{Giv} and Lian-Liu-Yau \cite{LLY} proved the following prediction
due to Candelas-de la Ossa-Green-Parkes \cite{CDGP}:
\be
F_0(T) = \frac{5}{2} ( J_1(t) \cdot J_2(t) - J_3(t) ).
\ee
Zinger \cite{Zin} proved the following prediction due to Bershadsky-Cecotti-Ooguri-Vafa \cite{BCOV}:
\be
F_1(T)
= \frac{25}{12}(J_1(t) - t) - \log \biggl( I_0(t)^{31/3}(1-5^5e^t)^{1/12}J_1'(t)^{1/2} \biggr).
\ee
Predictions up to $g = 51$ have been made by
Huang-Klemm-Quackenbush \cite{HKQ}
based on ideas in \cite{BCOV} and the work of Yamaguchi and Yau \cite{Y-Y}.

\subsection{From GW invariants to curve counting in Calabi-Yau 3-folds}
\label{sec:GV}

In general,
the numbers $K^0_d$ are rational numbers.
Candelas {\em et al} \cite{CDGP} suggested to consider numbers $n_d^0$ defined as follows:
\ben
K^0_d = \sum_{k|d} n_k \frac{k^3}{d^3},
\een
i.e.,
\ben
\sum_{d \geq 1} K^0_d e^{dT} = \sum_{k \geq 1} n_k^0 \sum_{l=1}^\infty \frac{e^{klT}}{l^3}.
\een
Then one can observe that $n_k^0$ become integral, for example,
$n^0_1 = 2875$, $n^0_2 = 609250$.
These numbers are expected to be the ``number" of rational curves of degree $k$ in $X_5$.
In general,
Gopakumar and Vafa \cite{GV} found a relationship between
the Gromov-Witten invariants of a Calabi-Yau 3-fold $X$
and counts of BPS states in M-theory.
This suggests the definition of some invariants
called the Gopakumar-Vafa BPS invariants which we now turn to.
They will be used as alternatives of ``number" of curves in $X$.
We will recall their definition momentarily.

Let $X$ be a projective algebraic variety.
For any $\beta\in H_2(X,\Z)$,
denote by $\Mbar_{g,n}(X,\beta)$ the moduli space of stable maps
from genus $g$ $n-$pointed curves to $X$ with the image in the class $\beta$ :
 $$
 f:(C,p_1,\cdots,p_n)\rightarrow X \quad \text{stable map,  s.t.} \quad f_*[C]=\beta.
 $$
The virtual dimension of this space is
$$\vdim \Mbar_{g,n}(X,\beta) =(1-g)(\dim X-3)+\int_\beta c_1(TX)+n.$$
When $X$ is a Calabi-Yau $3-$fold,
the virtual dimension of $\Mbar_{g,0}(X;\beta)$ is $0$.
Define the $0$-point Gromov-Witten invariant $K^g_\beta(X)$ by
$$K^g_\beta(X):=\int_{[\Mbar_{g}(X, \beta)]^{vir}} 1.
$$
\begin{conjdefn}
There are integers $n^g_\beta(X)$ called Gopakumar-Vafa BPS invariants such that:
\be \label{def:GV}
\sum_{\beta\neq 0}\sum_{g \geq 0} K^g_\beta (X) \lambda^{2g-2} t^\beta
=\sum_{\beta\neq 0}\sum_{g \geq 0} n^g_\beta (X) \sum_{k>0} \frac{1}{k}(2\sin(\frac{k\lambda}{2}))^{2g-2}t^{k\beta},
\ee
where $t^{k\beta}$ is an element of the Novikov ring associated
with $H_2(X;\Z)$,
i.e. the group ring of $H_2(X;\Z)$.
\end{conjdefn}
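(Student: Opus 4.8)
The plan is to read \eqref{def:GV} as an identity of formal Laurent series in $\lambda$ (after expanding each $2\sin(k\lambda/2)$) with coefficients in the Novikov ring, and to separate the statement into its two logically distinct assertions: that the defining equation determines numbers $n^g_\beta(X)$ \emph{uniquely}, and that these numbers are \emph{integers}. The first assertion is what makes the definition well posed and is provable by a purely formal inversion; the second is the substantive arithmetic content. I would prove the first directly and reduce the second to results already available in the toric setting of this paper.

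First I would record the analytic shape of the building blocks. Since $2\sin(\lambda/2)=\lambda\bigl(1-\lambda^2/24+\cdots\bigr)$, the function $(2\sin(\lambda/2))^{2g-2}$ is a Laurent series in $\lambda$ whose lowest-order term is $\lambda^{2g-2}$. As $g$ runs over $0,1,2,\dots$ the leading exponents $-2,0,2,4,\dots$ are strictly increasing, so the family $\{(2\sin(\lambda/2))^{2g-2}\}_{g\ge 0}$ is triangular with respect to the $\lambda$-adic filtration. This triangularity is the mechanism that will let me solve genus by genus.

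Next I would invert in two stages. Fix an ample class and let $|\beta|$ denote the associated degree, a positive additive function on effective classes. For each effective $\beta$ set $\Phi_\beta(\lambda)=\sum_{g\ge 0}K^g_\beta(X)\,\lambda^{2g-2}$; comparing coefficients of $t^\beta$ in \eqref{def:GV} gives
\be
\Phi_\beta(\lambda)=\sum_{\substack{k\ge 1\\ \beta/k\in H_2(X;\Z)}}\frac{1}{k}\sum_{g\ge 0} n^g_{\beta/k}(X)\,\bigl(2\sin(k\lambda/2)\bigr)^{2g-2}.
\ee
Every term with $k\ge 2$ involves only invariants $n^g_\gamma$ with $|\gamma|=|\beta|/k<|\beta|$, so arguing by induction on $|\beta|$ these contributions are already determined; subtracting them isolates the $k=1$ term $\sum_{g\ge 0}n^g_\beta(2\sin(\lambda/2))^{2g-2}$. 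By the triangularity of the previous paragraph, matching the coefficients of $\lambda^{-2},\lambda^{0},\lambda^{2},\dots$ in turn solves recursively and uniquely for $n^0_\beta,n^1_\beta,n^2_\beta,\dots$ as rational numbers (the base case, of minimal degree, is the pure genus inversion with no $k\ge 2$ terms). This establishes existence and uniqueness of rational $n^g_\beta(X)$ satisfying \eqref{def:GV}.

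The hard part, and the real obstacle, is integrality: the rational numbers produced by this inversion do not lie in $\Z$ for any formal reason, and proving that they do is precisely the Gopakumar--Vafa conjecture. In full generality (an arbitrary projective Calabi--Yau $3$-fold) it is open, which is why \eqref{def:GV} is stated here as a Conjecture/Definition rather than a theorem. In the toric and local-surface setting relevant to this paper, however, it is known: it was established through the mathematical theory of the topological vertex and its link with Chern--Simons invariants in \cite{P, Ko1, Ko2}, and for the geometries $X=\kappa_S$ studied here one may alternatively invoke the Chern--Simons duality developed in \cite{AMV, I, zhou3}. Accordingly I would prove well-definedness by the inversion above and quote integrality in our cases from these sources, which is consistent with how the invariants $n^g_\beta$ are used in the remainder of the paper.
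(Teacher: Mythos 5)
Your proposal is correct, and it matches how the paper itself handles this statement: the paper offers no proof at all (it is labelled a Conjecture/Definition precisely because integrality is the open Gopakumar--Vafa conjecture in general), and in \S 5 it simply invokes Peng \cite{P} and Konishi \cite{Ko1, Ko2} for the integrality in the local toric case, exactly as you do. Your formal inversion argument --- triangularity of $(2\sin(k\lambda/2))^{2g-2}$ with respect to the $\lambda$-adic filtration, combined with induction on the degree $|\beta|$ to strip off the $k\ge 2$ multiple-cover terms --- is sound and supplies the well-posedness (existence and uniqueness of the rational $n^g_\beta$) that the paper leaves implicit; your separation of that formal content from the arithmetic content is the right way to read the statement.
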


For fixed genus $g \geq 0$,
denote by
\be
F_g^X : = \sum_{\beta\neq 0}  K^g_\beta (X)   t^\beta
\ee
the instanton part of the genus $g$ free energy.
By taking the coefficients of $\lambda^{-2}$ on both sides of \eqref{def:GV},
one gets
\be
F_0^X = \sum_{\beta\neq 0}  n^0_\beta (X) \sum_{k>0} \frac{1}{k^3} t^{k\beta}
= \sum_{\beta \neq 0} n^0_\beta(X) \cdot  \Li_3(t^\beta),
\ee
where $\Li_r(z) = \sum_{k > 0} k^{-r} z^k$.
By taking the coefficients of $\lambda^0$ on both sides of \eqref{def:GV},
one gets
\be
\begin{split}
F_1^X = & \sum_{\beta\neq 0}  n^0_\beta (X) \sum_{k>0} \frac{1}{12k} t^{k\beta}
+  \sum_{\beta\neq 0}  n^1_\beta (X) \sum_{k>0} \frac{1}{k} t^{k\beta} \\
= & \sum_{\beta \neq 0} (\frac{1}{12} n^0_\beta(X) + n^1_\beta(X)) \cdot  \Li_1(t^\beta).
\end{split}
\ee
The numbers $n^1_\beta(X)$ should not be interpreted as the ``numbers" $n^{*1}_\beta(X)$ of elliptic curves in class $\beta$,
but instead the number of BPS states associated to $\beta$.
By a result of Pandharipande \cite{Pan},
they are related as follows:
\be
\sum_{\beta \neq 0} \sum_{k > 0} \frac{n^1_\beta(X)}{k}t^{k\beta}
= \sum_{\beta \neq 0} \sum_{k > 0} n^{*1}_\beta \frac{\sigma_1(k)}{k} t^{k\beta},
\ee
where $\sigma_r(k) = \sum_{l|k} l^r$.
Combining this with the above formula for $F_1^X$,
one gets
\be
F_1^X = \sum_{\beta \neq 0} (\frac{1}{12} n^0_\beta(X) \cdot \Li_1(t^\beta)
+ n^{*1}_\beta(X) \cdot  \log \prod_{l=1}^\infty (1-t^{l\beta})^{-1}).
\ee
For more mathematical work on contributions of embedded curves to the Gromov-Witten invariants,
see Faber-Pandharipande \cite{FP}, Bryan-Pandharipande \cite{BP}, Bryan-Leung \cite{Bry-Leu}, and Bryan \cite{Bry}.

\subsection{Local Gromov-Witten invariants and local Gopakumar-Vafa invariants}

A powerful technique for the computations of
Gromov-Witten invariants is the localization method \cite{K, Gra-Pan}.
The successes in computing the genus zero and genus one Gromov-Witten
invariants of quintic 3-fold are based on transforming the calculations to
the ambient space $\PP^4$.
Unfortunately it is not clear how this can be done for genus $g > 1$ at present.

While a compact Calai-Yau 3-fold does not have continuous symmetry group,
a noncompact Calabi-Yau 3-fold may have a 3-torus as symmetry group
which has isolated fixed points.
For example,
let $S$ be a toric Fano surface and let $X=\kappa_S$ be the total space
of its canonical line bundle.
It is interesting to study their Gromov-Witten invariants of such spaces \cite{CKYZ,KZ}.
First of all,
suppose that $S$ is a toric Fano surface embedded in a Calabi-Yau 3-fold $Y$.
Because $S$ is Fano, its canonical line bundle is negative,
so the normal bundle $N_{S/Y}$ is negative because it is isomorphic
to $\kappa_S$ by the adjunction formula and the triviality of $K_Y$.
If we have a stable map $f:C \to Y$,
such that $f(C) \subset S$,
by the negativity of $N_{S/Y}$,
under deformations of $f$,
the images will remain in $S$.
In other words,
let $\beta \subset H_2(S;\Z)$,
and let $i_S: S \to Y$ be the inclusion,
then $\Mbar_{g,0}(S; (i_S)_*\beta)$ will have a component
isomorphic to $\Mbar_{g,0}(S;\beta)$,
so so there is a contribution, denoted by $K^g_\beta(\kappa_S)$,
to the Gromov-Witten invariant $K^g_{(i_S)*\beta}(Y)$,
from this component.
Recall we have the following diagram
$$
\xymatrix{
  \overline M_{g,1}(S, \beta) \ar[d]_{ev} \ar[r]^{\pi} &     \overline M_{g}(S, \beta)   \\
  S                     }$$
where $\pi$ is the forgetful map and $ev$ is the evaluation map.
Define a bundle $(\kappa_S)^g_\beta$ as follows:
$$
(\kappa_S)^g_\beta: = \mathcal R^1\pi_* ev^* \kappa_S,
$$
One can show that
$$
K^g_\beta(\kappa_S)=\int_{[\Mbar_{g,0}(S, \beta)]^{vir}} e((\kappa_S)^g_\beta).
$$

Besides providing local contributions to the Gromov-Witten invariants,
another more important motivation to study local Gromov-Witten invariants
is that by suitably choosing the local Calabi-Yau geometries,
one can reproduce the partition functions of gauge theories,
an idea called the geometric engineering by the string theorists \cite{KKV1}.

In exactly the same fashion as \eqref{def:GV},
the local Gopakumar-Vafa invariants of $\kappa_S$ is defined as follows:
\be \label{def:LocalGV}
\sum_{\beta\neq 0}\sum_{g \geq 0} K^g_\beta (\kappa_S) \lambda^{2g-2}t^\beta
=\sum_{\beta\neq 0}\sum_{g \geq 0} n^g_\beta (\kappa_S)
\sum_{k>0} (2\sin(\frac{k\lambda}{2}))^{2g-2}\frac{t^{k\beta}}{k}.
\ee
Suppose that $C$ is a smooth curve in $S$
such that $[C] = \beta$,
then  the genus of $C$ which we denote by $g(\beta)$ is given by the adjunction formula:
\be
g(\beta) = 1 + \frac{1}{2} (\beta^2 + \kappa_S \cdot \beta).
\ee
Based on empirical evidence in \cite{AMV},
it is implicitly assumed in physics literature that
\be
n^g_\beta(\kappa_s) = 0
\ee
for $g> g(\beta)$.
We will prove this in later sections.
Under this assumption,
the summation over $g$ in \eqref{eqn:LocalGV2} becomes a finite summation $\sum_{g=0}^{g(\beta)}$.

\subsection{Quadratic properties of the local Gopakumar-Vafa invariants}
\label{sec:Transform}

Now we recall an observation made by the second author in \cite{zhou1}.
Write the left-hand side of \eqref{def:LocalGV} as $F^{\kappa_S}$ and
let $q=e^{\sqrt{-1} \lambda}$,
then one can rewrite \eqref{def:LocalGV} in the following form
\be \label{eqn:LocalGV2}
F^{\kappa_S}=\sum_{\beta\neq 0}\sum_{g \geq 0} n^g_\beta (\kappa_S)
\sum_{k>0} (-1)^{g-1}(q^{k/2}-q^{-k/2})^{2g-2} \frac{t^{k\beta}}{k}.
\ee
Let us recall how Gopakumar and Vafa \cite[Section 2]{GV} defined the number $n^g_\beta(X)$.
See also Katz-Klemm-Vafa \cite[Section 3]{KKV}, especially (3.1)-(3.3).
Then for each $\beta \in H_2(X; \Z)$, and half integers $j_L, j_R \in \half \Z$,
the number $N^\beta_{j_L,j_R}(X)$ denote the number of BPS states with charge represented by the class
$\beta$ and with $SU(2)_L\times SU(2)_R$ representation $[(j_L)] \otimes [(j_R)]$ indexed by $(j_L, j_R)$.
The numbers $N_{j_L,j_R}^\beta(X)$ may change under the deformation of the complex structures
on $X$,
however, they claimed that
the number
$$\sum_{j_R} (-1)^{2j_R} (2j_R+1) N^\beta_{j_L,j_R}$$
is an invariant,
and they defined the invariants $n^g_\beta(X)$ by (cf. \cite[(3.2)]{KKV}):
\be \label{eqn:N-n}
\sum_g n^g_\beta(X) I_g  = \sum_{j_L} (\sum_{j_R} (-1)^{2j_R} (2j_R+1) N^\beta_{j_L,j_R} ) [(j_L)],
\ee
where
\be
I_g = [(\half)+2(0)]^{\otimes g}.
\ee
Let us give some explanation of the physical notations.
By $SU(2)_L$ and $SU(2)_R$ we mean two copies of $SU(2)$.
Let
\be
t_{L, R} = \begin{pmatrix} e^{i\theta} & 0 \\ 0 & e^{- i \theta} \end{pmatrix}.
\ee
By $[(j_{L,R})]$ we mean the irreducible representation of $SU(2)_{L,R}$ on which $t_{J,R}$
acts as
$$\diag(e^{2j_{L,R} i\theta}, e^{2(j_{L,R}-1) i \theta}, \dots, e^{-2j_{L,R}i\theta})$$
in a suitable basis. This representation has dimension $2j_{L,R}+1$.
Given a representation $V$ of $SU(2)$,
its character $\chi_V$ is the trace of the matrix representation of $t$.
In particular,
\bea
&& \chi_{ [(j)]} = e^{2j i\theta} + e^{2(j-1) i \theta} + \dots + e^{-2ji\theta}, \\
&& \chi_{ I_g} = (e^{i\theta} + e^{- i \theta} + 2)^g.
\eea
Write $e^{i\theta} = -q$,
one gets:
\bea
&& \chi_{ [(j)]} = (-1)^{2j} (q^{2j} + q^{2(j-1)} + \dots + q^{-2j}), \\
&& \chi_{ I_g} = (-1)^g (q^{1/2} - q^{- 1/2})^{2g}.
\eea
Write $R_g(q) = q^g + q^{g-2} + \cdots + q^{-g}$.
Then by \eqref{eqn:N-n},
\ben
&& \sum_g n^g_\beta(X) (-1)^g (q^{1/2} - q^{- 1/2})^{2g} \\
& = & \sum_{j_L} (\sum_{j_R} (-1)^{2j_L+2j_R} (2j_R+1) N^\beta_{j_L,j_R}(X) ) R_{2j_L}(q).
\een
Define
\be
N^g_\beta(X) = \sum_{j_R} (-1)^{g+2j_R} (2j_R+1) N^\beta_{g/2,j_R}(X).
\ee
We then arrive at the following identity (cf. \cite[(111)]{HIV},
see also \cite[(5)]{zhou1}):
\be
\sum_g n^g_\beta(X) (-1)^g (q^{1/2} - q^{- 1/2})^{2g}
= \sum_{g \geq 0} N^g_\beta(X) R_g(q).
\ee
The main observation of \cite{zhou1} is the following:

\begin{conjecture}
Fix $\delta \geq 0$.
For local Calabi-Yau geometries given by the canonical line bundles of toric Fano surfaces,
$N^{g(\beta)-\delta}_\beta(\kappa_S)$ is a quadratic polynomial in $d$, up to a suitable sign,
when $d$ is sufficiently large compared to $\delta$.
\end{conjecture}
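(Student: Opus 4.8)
The plan is to derive the quadratic property as an immediate consequence of a closed-form generating function for the invariants, placing the real effort in establishing that closed form. Writing $\beta=d$ for the class $d\cdot[\text{line}]$ on $\PP^2$, the adjunction formula gives top genus $g(d)=(d-1)(d-2)/2$; let $M^m_d$ denote the transformed invariant $N^{g(d)-m}_d(\kappa_{\PP^2})$ (with the sign normalization of \cite{zhou1}). The target identity is
\[
\sum_{m\geq 0} M^m_d\, q^m = \frac{[1]^2[2]}{[\infty]!^3}\left(\binom{d+2}{2} - 3\,G_2(q)\right),
\]
valid in the relevant range of $m$. Granting this, the quadratic property is transparent: the two $q$-series $[1]^2[2]/[\infty]!^3$ and $[1]^2[2]\,G_2(q)/[\infty]!^3$ are independent of $d$, so reading off the coefficient of $q^m$ yields $M^m_d = a(m)\binom{d+2}{2} - 3\,b(m)$ for universal constants $a(m),b(m)$, and $\binom{d+2}{2}=(d^2+3d+2)/2$ is manifestly quadratic in $d$.

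So the substantive task is the closed formula. First I would start from the Chern-Simons / topological-vertex expression for the Gromov-Witten partition function of $\kappa_{\PP^2}$, whose mathematical justification is in \cite{zhou3} following \cite{AMV, I}. In degree $d$ this is a finite sum over partitions $\mu \vdash d$, weighted by framing factors and by the leading colored HOMFLY data of the associated link. The crucial technical input is that these colored HOMFLY polynomials are principal specializations of skew Schur functions, which convert the partition sum into manageable products of $q$-shifted factorials.

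Next I would pass from the partition function to the transformed invariants. Running through \eqref{eqn:N-n}, \eqref{def:LocalGV} and \eqref{eqn:LocalGV2} rewrites $\sum_g N^g_d\,R_g(q)$ in terms of the data above, and extracting the \emph{leading} (top-genus) part amounts to isolating the dominant behaviour as $\lambda\to 0$, equivalently the lowest-order terms in $q$. Here the theory of basic hypergeometric series is decisive: identities of $q$-binomial and Cauchy type resum the partition sum, producing the universal Hilbert-scheme factor $1/[\infty]!^3=\sum_n \chi((\PP^2)^{[n]})\,q^n$ and separating the quadratic prefactor $\binom{d+2}{2}$ from the $d$-independent Eisenstein correction $-3\,G_2(q)$.

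The main obstacle will be this last resummation. I expect the delicate point to be controlling exactly which partitions contribute to a fixed power of $q$ (that is, to a fixed genus $g(d)-m$) and showing that all subleading contributions either cancel or are pushed beyond order $q^m$ precisely when $d\geq m+2$; this is the origin of the range restriction in the statement. The bookkeeping of framing signs and of the skew-Schur specializations is where the hypergeometric identities must be applied with care, but once the leading sum is in closed hypergeometric form the identification with the right-hand side is a direct computation. The general toric Fano case should follow the same scheme, with $[\infty]!^3$ replaced by $[\infty]!^{e(S)}$ and $\binom{d+2}{2}$ by the lattice-point count $P^S(\beta)$, which is deferred to Part II.
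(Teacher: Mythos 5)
Your plan is essentially the paper's proof: reduce the quadratic property to the closed generating-function formula (Theorem \ref{thm:Mdelta}), write the degree-$d$ coefficient of the partition function as a sum of products of Hopf-link invariants $\W_{\mu^1,\mu^2}\W_{\mu^2,\mu^3}\W_{\mu^3,\mu^1}$ over \emph{triples} of partitions with $\sum_i|\mu^i|=d$, use $\deg_q$ estimates (Lemmas \ref{lm:NKappaEst} and \ref{lm:WEst}) to show that only the single-row triples $(d_1),(d_2),(d_3)$ contribute to the top $d-1$ coefficients, and resum those via the $q$-binomial theorem into $\binom{d+2}{2}/[\infty]!^3$ minus the Eisenstein correction. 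The only points to make explicit are that the same degree estimates must also absorb the passage $F=\log Z$ (the products $\I(d_1)\cdots\I(d_k)$, $k>1$) and the discarding of multicover terms $k\,|\,d$, $k>1$, together with the basis change from $(q^{1/2}-q^{-1/2})^{2g}$ to $R_g$, and that the ``leading'' contributions are the \emph{highest} powers of $q$ (coefficients of $q^{g(d)-\delta}$ for $0\le\delta\le d-2$) rather than the lowest-order terms.
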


It was also suggested in \cite{zhou1} that the argument in \cite{KKV} might give an explanation
of quadratic behavior of the transformed Gopakumar-Vafa invariants $N^g_\beta$.

Note the transformation from the invariants $n^g_\beta$ to $N^g_\beta$ is quite different
from the transformation of the node polynomials introduced by G\"ottsche \cite{Got}
(cf. \S \ref{sec:SurfacesQuad}).

\section{Some Combinatorial Preliminaries}

In this section we collect some  combinatorial results
which we use later.

\subsection{Partitions and some associated combinatoiral numbers}
Let $\mu=(\mu_1,\mu_2,\dots,\mu_l, \dots)$ be a partition,
i.e.,
$\mu_1 \geq \mu_2 \geq \cdots$ is a sequence of nonnegative integers
such that $\mu_n = 0$ for $n \gg 0$.
We will use the following numbers associated with $\mu$:
\ben
&& l(\mu)= |\{i:\,\mu_I>0\}|, \quad
|\mu|=\sum_{i\geq 1} \mu_i, \quad m_i(\mu)=|\{i:\,\mu_j=i\}|,\\
&& z_\mu = \prod_{i\geq 1} m_i(\mu)! \prod_{i\geq 1} \lambda_i,  \quad
 n_\mu=\sum_{i \geq 1}(i-1) \mu_i, \\
&& k_\mu= n_{\mu^t}-n_\mu=\sum_{i\geq 1} \mu_i(\mu_i-2i+1).
\een
It is very useful to represent a partition $\mu$ by its Young diagram $Y(\mu)$.
By transposing the rows and columns of $Y(\mu)$,
we get the Young diagram of another partition, denoted by $\mu^t$.
For a box $x$ sitting at the $i$-th row and the $j$-th column of $Y(\mu)$,
its arm number, leg number and hook number are defined by
\begin{align}
a(x) & = \mu_i - i, & l(x) & = \mu^t_j - j, & h(x) & = a(x) + l(x) + 1.
\end{align}

Let $\mu$ be a partition of $n>0$.
Then $\mu$ can be used either to index an irreducible representation $R_\mu$
of $S_n$,
or a conjugacy class $C_\mu$ of $S_n$ (cf. e.g. \cite[\S 1.7]{MacDonald}).
In particular,
$R_{(n)}$ is the trivial representation,
$R_{(1^n)}$ is the sign representation;
$C_{(1^n)}$ is the conjugacy class of the identity element,
and $C_{(n)}$ is the conjugacy class of the $n$-cycles.
For two partitions $\mu$ and $\nu$,
denote by $\chi_\mu(\nu)$  the value of the irreducible character $\chi_{R_\mu}$
on the conjugacy class $C_{\nu}$.
They satisfy the following orthogonality relations:
\bea
&& \sum_{\nu} \frac{1}{z_\nu} \chi_{\mu^1}(\nu) \cdot \chi_{\mu^2}(\nu)
= \delta_{\mu^1, \mu^2}, \\
&& \sum_{\mu} \chi_{\mu}(\nu^1) \cdot \chi_{\mu}(\nu^2)
= \delta_{\nu^1, \nu^2} z_{\nu^1}.
\eea

\subsection{Estimates of some combinatorial numbers}

Let $\mu$ and $\nu$ be two partitions of $n$.
We write $\mu> \nu$ or $\nu < \mu$
if the first nonzero $\mu_i - \nu_i$ is positive.
This defines an ordering on the set $\cP_n$ of partitions of $n$,
called the {\em reverse lexicographic ordering} \cite{MacDonald}.

\begin{lem} \label{lm:NKappaEst}
Suppose that $\mu, \nu \in \cP_n$ and $\mu > \nu$,
then one has
\be
 n_\mu < n_\nu, \quad \kappa_\mu > \kappa_\nu.
\ee
In particular,
if $\mu < (n)$,
then
\be
\kappa_\mu \leq \kappa_{(n)} - 2n.
\ee
\end{lem}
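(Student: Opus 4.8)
The plan is to route everything through the dominance partial order $\trianglerighteq$ on $\cP_n$ (where $\mu \trianglerighteq \nu$ means $\sum_{i\le k}\mu_i \ge \sum_{i\le k}\nu_i$ for all $k$), using two elementary identities. Writing $c(x)=j-i$ for the content of the box $x$ in row $i$, column $j$ of $Y(\mu)$, one has
\[
\kappa_\mu \;=\; \sum_{i\ge 1}\mu_i^2 - 2 n_\mu - n \;=\; 2\sum_{x\in Y(\mu)} c(x),
\]
the second equality being the standard rewriting of $\sum_i \mu_i(\mu_i-2i+1)$ that identifies $\kappa_\mu$, up to the factor $2$, with the total content. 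Both quantities change in a controlled way under a single raising move that takes the last box of row $j$ to the end of row $i$ (for $i<j$, whenever the result is again a partition): $n_\mu$ decreases by $j-i>0$, while the content sum increases by $\mu_i-\mu_j+1+(j-i)>0$. Since any two dominance-comparable partitions are joined by a chain of such moves (Brylawski; cf.\ \cite{MacDonald}), this gives at once $n_\mu < n_\nu$ and $\kappa_\mu > \kappa_\nu$ whenever $\mu \rhd \nu$ (strict dominance).

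Next I would record that the reverse lexicographic order refines dominance (\cite{MacDonald}, I.1.11): $\mu \trianglerighteq \nu \Rightarrow \mu \ge \nu$. Every comparison needed below can therefore be replaced by a genuine dominance relation, and the inequalities of the first display are precisely the monotonicity statements established along the dominance chains. The point requiring the most care is exactly here: the reverse lexicographic order is a \emph{total} order strictly finer than dominance, so $n_\mu$ and $\kappa_\mu$ cannot be compared by the bare lexicographic inequality across dominance-incomparable pairs; the argument must pass through an actual dominance relation in each case, and the strict conclusions should be read for dominance-comparable pairs.

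For the \emph{in particular} statement this passage is immediate and clean, and is all that the later sections invoke. The single-row partition $(n)$ is the unique $\trianglerighteq$-maximum, and the only partition it covers is $(n-1,1)$; moreover $(n-1,1)\trianglerighteq\mu$ for every $\mu\neq(n)$, because any such $\mu$ has $\mu_1\le n-1$ and hence all of its partial sums are bounded by those of $(n-1,1)$. A direct evaluation gives $\kappa_{(n)}=n(n-1)$ and $\kappa_{(n-1,1)}=n(n-3)$: the single lowering step from $(n)$ to $(n-1,1)$ moves a box of content $n-1$ to content $-1$, dropping the content sum by $n$ and hence $\kappa$ by $2n$. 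Combining $\kappa_{(n-1,1)}=\kappa_{(n)}-2n$ with the dominance-monotonicity of $\kappa$ yields $\kappa_\mu \le \kappa_{(n-1,1)}=\kappa_{(n)}-2n$ for all $\mu\neq(n)$, which is the claimed bound.
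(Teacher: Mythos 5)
Your proof is correct in everything it asserts, and it takes a genuinely different route from the paper's. The paper argues by reducing to what it calls adjacent pairs in the reverse lexicographic ordering and computing $n_\mu-n_\nu$ and $\kappa_\mu-\kappa_\nu$ explicitly for those pairs; you instead work with the dominance order, identify $\kappa_\mu$ with twice the total content of $Y(\mu)$, track both quantities along single box-raising moves, and invoke the standard fact that dominance-comparable partitions are joined by chains of such moves. Your caveat about dominance-incomparable pairs is not a technical scruple but an essential correction: the first display of the lemma is literally false as stated. For $\mu=(3,1,1,1)$ and $\nu=(2,2,2)$ in $\cP_6$ one has $\mu>\nu$ in the reverse lexicographic order, yet $n_\mu=n_\nu=6$ and $\kappa_\mu=\kappa_\nu=-6$, so neither strict inequality holds. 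The paper's own proof has the matching defect: the pairs it treats (obtained by splitting one box off into a new row of length $1$) are not in general adjacent in reverse lexicographic order --- the reverse-lex successor of $(3,1)$ is $(2,2)$, which its moves never produce --- so it too only establishes monotonicity along certain chains and does not cover all comparable pairs. What both arguments do secure, and what the rest of the paper actually uses in its degree estimates, is the ``in particular'' clause; there the relevant comparisons are with $(n)$ and $(n-1,1)$, which dominate every other partition of $n$, and your computation $\kappa_{(n-1,1)}=\kappa_{(n)}-2n$ combined with dominance-monotonicity of $\kappa$ closes that case completely. If anything, your version is the one that should replace the statement: assert the monotonicity for the dominance order (where it is true and your proof is complete) and deduce the bound $\kappa_\mu\leq\kappa_{(n)}-2n$ for all $\mu\neq(n)$ exactly as you do.
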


\begin{proof}
It suffices to consider the case when $\mu$ and $\nu$ are adjacent
in the reverse lexicographic ordering.
Suppose that $l(\mu) = l$.
We have two cases to consider. \\
Case 1. Suppose that
$\mu_l > 1$,
then one has
$$\nu = (\mu_1, \dots, \mu_{l-1}, \mu_l -1, 1).$$
Then we have
\ben
n_\mu - n_\nu
& = & \sum_{i=1}^{l-1} (i-1) \mu_i + (l-1)\mu_l \\
& - & (\sum_{i=1}^{l-1} (i-1) \mu_i + (l-1)  (\mu_l-1) + l \cdot 1) \\
& = & -1 < 0,
\een
and
\ben
\kappa_\mu - \kappa_\nu
& = & \sum_{i=1}^{l-1} \mu_i(\mu_i-2i+1) + \mu_l(\mu_l -2l+1) \\
& - & \big( \sum_{i=1}^{l-1} \mu_i(\mu_i-2i+1)
+ (\mu_l-1)(\mu_l-1 -2l+1) \\
&& + 1 \cdot (1 - 2(l+1)+1) \big) \\
& = &  \mu_l(\mu_l -2l+1)
- (\mu_l-1)(\mu_l-2l) +2l \\
& = & 2\mu_l > 0.
\een
Case 2. Suppose that $\mu_k>1$, $\mu_{k+1} = \mu_{k+2} = \cdots = \mu_l = 1$.
Then we have
$$\nu =(\mu_1, \dots, \mu_{k-1}, \mu_k - 1, \nu_{k+1} = 1, \dots, \nu_{l+1} = 1),$$
and so we have
\ben
n_\mu - n_\nu
& = & \sum_{i=1}^k (i-1)\mu_i + \sum_{j=k+1}^l (j-1) \cdot 1 \\
& - & (\sum_{i=1}^{k-1} \mu_i + (k-1) \cdot (\mu_k-1)
+ \sum_{j=k+1}^l (j-1) \cdot 1 + l \cdot 1 ) \\
& = & - (l-k+1) < 0
\een
and
\ben
\kappa_\mu - \kappa_\nu
& = & \sum_{i=1}^{k} \mu_i(\mu_i-2i+1)
+ \sum_{j=k+1}^l 1 \cdot (1 - 2j+1) \\
& - & \big( \sum_{i=1}^{k-1} \mu_i(\mu_i-2i+1)
+ (\mu_k-1)(\mu_k-1 -2k+1) \\
&& + \sum_{j=k+1}^{l+1} 1 \cdot (1 - 2j+1) \big) \\
& = &  \mu_k(\mu_k -2k+1)
- (\mu_k-1)(\mu_k-2k) +2l \\
& = & 2\mu_k -2k + 2l > 0.
\een
\end{proof}

\subsection{Symmetric functions and their specializations}

Let $x=(x_1,x_2, \dots)$ be a sequence of indeterminates.
Consider the infinite product
$\prod_{i=1}^\infty (1-x_it)$.
One has the following expansions:
\bea
&& \prod_{i=1}^\infty (1-x_it) = \sum_{k=0}^\infty (-1)^k e_k(x) t^k, \\
&& \prod_{i=1}^\infty \frac{1}{(1-x_it)} =  \sum_{k=0}^\infty h_k(x) t^k, \\
&& \log \prod_{i=1}^\infty (1-x_it) = - \sum_{k=0}^\infty p_k(x) \frac{t^k}{k},
\eea
where
\ben
&& e_k(x)=\sum_{i_1<i_2<\cdots< i_k}x_{i_1}x_{i_2}\cdots x_{i_k},\\
&& h_k(x)=\sum_{\sum k_j=k} x_{i_1}^{k_1}x_{i_2}^{k_2}\cdots x_{i_s}^{k_s}, \\
&& p_k(x)=\sum_{j \geq 1} x_j^k,
\een
These date back to Euler.
The following monomials of these symmetric functions can be used to obtain
additive basis of the space $\Lambda$ of symmetric functions:
\ben
e_\mu(x)=\prod_{i\geq 1} e_{\mu_i},\quad   h_\mu(x)=\prod_{i\geq 1} h_{\mu_i},\quad
p_\mu(x)=\prod_{i\geq 1} p_{\mu_i},
\een
so do the Schur functions $s_\mu$ defined by:
\ben
s_\mu(x)=\det(h_{\mu_i-i+j}(x))_{1\leq i,j \leq n},
\een
where $n > \max\{\mu_1, l(\mu)\}$.
The bases $\{s_\nu\}$ and $\{p_\eta\}$ are related as follows \cite[\S I.7]{MacDonald}:
\bea
&& s_{\nu}(x)
=\sum_{|\eta|=|\nu|}\frac{\chi_{\nu}(\eta)}{z_{\eta}} \cdot p_{\eta}(x), \label{eqn:SchurNewton1} \\
&& p_{\eta}(x)
=\sum_{|\eta|=|\nu|} \chi_{\nu}(\eta) \cdot s_{\nu}(x). \label{eqn:SchurNewton2}
\eea

\subsection{Symmetric functions and their specializations}
The following identities are also due to Euler:
\be \label{eqn:EulerEProduct}
\prod_{n=0}^\infty (1 - q^n z)
= 1+ \sum_{n=1}^\infty (-1)^n \frac{q^{n(n-1)/2}}{\prod_{j=1}^n (1-q^j)} z^n,
\ee
\be \label{eqn:EulerHProduct}
\prod_{n=0}^\infty \frac{1}{1-q^nz}
= 1+ \sum_{n \geq 1} \frac{1}{\prod_{j=1}^n (1-q^j)} z^n.
\ee
From these one gets:
\bea
&& e_k(1,q^{-1}, q^{-2}, \dots) =  \frac{q^{-k(k-1)/2}}{[k]_{q^{-1}}!}, \\
&& h_k(1,q^{-1}, q^{-2}, \dots) =   \frac{1}{[k]_{q^{-1}}!},
\eea
where we have  used the following notations:
\be
[k]_q:=1-q^{j}, \quad [k]_q!:=[1]_q [2]_q \cdots [k]_q.
\ee
It is trivial to see that
\be
p_k(1,q^{-1}, q^{-2}, \dots) = \frac{1}{1-q^{-k}} = \frac{1}{[k]_{q^{-1}}}.
\ee
We have \cite{MacDonald}:
\be \label{eqn:SchurSpec}
s_{\mu}(1,q^{-1}, q^{-2}, \dots)
=q^{-n(\mu)}\prod_{x\in Y(\mu)} \frac{1}{1-q^{-h(x)}},
\ee
where $Y(\mu)$ is the Young tableau of $\mu$ and $h(x)$ is the hook number
of $x$.

\subsection{Basic hypergeometric series}

First we recall the following notations:
\ben
&& (a;q)_\infty = \prod_{i=0}^{\infty}(1-aq^i), \\
&& (a;q)_n = \frac{(a;q)_\infty}{(aq^n;q)_\infty}
= \prod_{i=0}^{n-1} (1-aq^i).
\een
Let
\ben
&[\infty]_q!=(q;q)_{\infty}.
\een
The following Lemma will be used repeatedly in this work
(cf.  e.g. \cite{GR}, page 7-10).

\begin{lem} We have the $q$-analog of binomial theorem:
\be\label{binomial}
\sum_{n\geq 0} \frac{(a;q)_n}{(q,q)_n} z^n = \frac{(az;q)_\infty}{(z;q)_\infty}.
\ee
\end{lem}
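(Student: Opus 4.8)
The plan is to prove the identity by the standard $q$-difference equation method: I show that the right-hand side satisfies a simple first-order functional equation in $z$, expand it as a power series, and observe that this equation forces the Taylor coefficients to obey a recurrence whose unique solution is $(a;q)_n/(q;q)_n$.

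First I would set $f(z) = (az;q)_\infty/(z;q)_\infty$ and compute the ratio $f(qz)/f(z)$. Using the elementary factorizations $(az;q)_\infty = (1-az)(aqz;q)_\infty$ and $(z;q)_\infty = (1-z)(qz;q)_\infty$, the infinite products telescope and one obtains the functional equation
\[
(1-az)\, f(qz) = (1-z)\, f(z).
\]
Since $|q|<1$, both infinite products converge absolutely for $|z|<1$, so $f$ is analytic there with $f(0)=1$, and I may write $f(z) = \sum_{n\geq 0} c_n z^n$ with $c_0 = 1$.

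Next I would substitute the power series into the functional equation and compare coefficients of $z^n$. This yields
\[
c_n(1-q^n) = c_{n-1}(1 - a q^{n-1}),
\]
and because $1-q^n \neq 0$ for every $n\geq 1$ when $|q|<1$, the recurrence determines $c_n$ uniquely from $c_{n-1}$. Solving it by telescoping with $c_0 = 1$ gives
\[
c_n = \prod_{k=1}^n \frac{1 - a q^{k-1}}{1-q^k} = \frac{(a;q)_n}{(q;q)_n},
\]
which is exactly the claimed coefficient. As a consistency check, setting $a=0$ collapses the left-hand coefficients to $1/(q;q)_n$ and the right-hand side to $1/(z;q)_\infty$, recovering Euler's identity \eqref{eqn:EulerHProduct}, while the substitution $z \mapsto z/a$ followed by $a\to\infty$ recovers \eqref{eqn:EulerEProduct}.

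The computation is entirely routine; the only point needing care — and hence the main thing to verify — is the analytic justification, namely that the products defining $f$ converge to an analytic function on $|z|<1$ so that its Taylor coefficients may legitimately be compared term by term, and that the resulting identity then holds on the common domain $|q|<1$, $|z|<1$. Alternatively, I would note that the entire argument can be carried out purely formally in $\mathbb{Q}(q)[[z]]$, where the functional equation and the recurrence make verbatim sense and uniqueness of the solution is immediate; this route sidesteps any convergence discussion altogether.
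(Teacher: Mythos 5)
Your proof is correct. Note first that the paper itself gives no proof of this lemma: it simply cites Gasper--Rahman, so there is no argument in the text to compare against. What you have written is the standard derivation (and essentially the one found in the cited reference): the telescoping computation giving the functional equation $(1-az)f(qz)=(1-z)f(z)$ is right, the coefficient comparison $c_n(1-q^n)=c_{n-1}(1-aq^{n-1})$ is right, and the telescoped product is exactly $(a;q)_n/(q;q)_n$. Your two consistency checks against \eqref{eqn:EulerHProduct} and \eqref{eqn:EulerEProduct} also work out. One small remark on the final paragraph: for the way this lemma is actually used in the paper --- with $q$ replaced by $q^{-1}$ and everything manipulated as elements of $\C[q,q^{-1}]]$ or as formal series in auxiliary variables --- your alternative formal-power-series framing in $\mathbb{Q}(q)[[z]]$ is the more natural one, since it makes the identity available without any discussion of the region $|q|<1$; either justification is acceptable, but you could safely lead with the formal one.
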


A particular case of \eqref{binomial} is when $a=0$,
we get the $q$-analog exponential function:
\be\label{exp}
\sum_{n\geq 0} \frac{z^n}{[n]_q!}=\frac{1}{(z,q)_{\infty}}.
\ee
This is just another way to write \eqref{eqn:EulerHProduct}.

\subsection{Link invariants of the Hopf link}

For calculations of the Gromov-Witten invariants of
local Calabi-Yau geometries,
we will need the following expressions
arising from the colored HOMFLY polynomials
of the Hopf link (see e.g. \cite{AMV, I, zhou2}):
\be\label{w}
\W_{\mu^1,\mu^2}(q)=s_{\mu^1}(q^{\varrho})
s_{\mu^2}(q^{\mu^1+\varrho}),
\ee
where $q^\varrho=(q^{-1/2},q^{-3/2},q^{-5/2}, \cdots)$ and
$q^{\mu+\varrho}=(q^{\mu_1-1/2},q^{\mu_2-3/2},q^{\mu_3-5/2},\cdots)$.
These invariants have the following symmetric properties:
\be \label{W:Symmetry}
\W_{\mu,\nu}(q) = \W_{\nu,\mu}(q).
\ee
The following are some examples:
\begin{eqnarray*}
&& \W_{(1), (1)}(q) = \frac{q^2-q+1}{(q- 1)^2}, \\
&& \W_{(2), (1)}(q) = \frac{q^{\frac{3}{2}}(q^3-q^2+1)}{(q-1)^2(q^2-1)}, \\
&& \W_{(1,1), (1)}(q) = \frac{q^3-q+1}{q^{\frac{1}{2}}(q-1)^2 (q^2-1)}, \\
&& \W_{(2), (1,1)}(q) = \frac{q(q^4-q^2+1)}{(q-1)^2(q^2-1)^2}, \\
&& \W_{(1,1), (1,1)}(q)
= \frac{q^6-q^5+2q^3-q^2-q+1}{q^2(q-1)^2(q^2-1)^2}, \\
&& \W_{(2), (2)}(q) = \frac{q^2(q^6-q^5-q^4+2q^3-q+1)}{(q-1)^2(q^2-1)^2}, \\
\end{eqnarray*}
From these examples,
one can see that
$\W_{\mu\nu}$ has the following form:
$$\W_{\mu,\nu}(q) = \frac{f(q)}{g(q)} \;\;\text{or} \;\; \frac{q^{1/2} f(q)}{g(q)},$$
where $f(q)$ and $g(q)$ are polynomials in $q$.

In general,
if $f(q)$ is a series in $\C[q,q^{-1}]]$, i.e., of the form:
$$\sum_{n \leq M} a_n q^n$$
for some integer $M$ such that $a_M \neq 0$,
then we define
$\deg_q f(q) = M$.
It is easy to see that these definitions are well-defined.
In the following when we take $\deg_q$ of some function $f(q)$,
we mean first taking the expansion of $f(q)$ into a series in $\C[q,q^{-1}]]$.
If $f(q)$ and $g(q)$ are two series in $q$ of the above form,
then it is easy to see that
\ben
&& \deg_q \frac{f(q)}{g(q)} = \deg_q f(q) - \deg_q g(q).
\een
We define
\ben
&& \deg_q \frac{q^{1/2}f(q)}{g(q)} = \frac{1}{2} + \deg_q f(q) - \deg_q g(q).
\een

\begin{lem}\label{lm:WEst}
For two partitions $\mu$ and $\nu$,
one has
\be
\deg_q \W_{\mu,\nu}(q) \leq |\mu| \cdot |\nu| - (|\mu|+|\nu|)/2.
\ee
The equality holds iff $\mu=(|\mu|)$ and $\nu = (|\nu|)$.

Moreover, if $\mu\neq(|\mu|)$, we have
$$
\deg_q \W_{\mu,\nu}(q) \leq |\mu| \cdot |\nu| - (|\mu|+|\nu|)/2-(|\nu|+1),
$$
the equality holds iff $\mu = (|\mu|-1,1)$ and $\nu = (|\nu|)$;
if $\mu\neq(|\mu|)$ and $\mu\neq(|\mu|-1,1)$ , we have
$$
\deg_q \W_{\mu,\nu}(q) \leq |\mu| \cdot |\nu| - (|\mu|+|\nu|)/2-2(|\nu|+1),
$$
the equality holds iff $\mu = (|\mu|-2,2)$ and $\nu = (|\nu|)$.
\end{lem}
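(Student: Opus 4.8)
The plan is to replace the three inequalities by a single \emph{exact} formula for $\deg_q\W_{\mu,\nu}(q)$ and then reduce everything to a transparent nonnegativity statement about a combinatorial gap. Since $\deg_q$ is additive on products of series in $\C[q,q^{-1}]]$ with nonvanishing leading coefficient (as recalled before the lemma), I would first treat the two factors of \eqref{w} separately. For the first factor, write $q^\varrho=q^{-1/2}(1,q^{-1},q^{-2},\dots)$ and use homogeneity of $s_\mu$ to get $s_\mu(q^\varrho)=q^{-|\mu|/2}s_\mu(1,q^{-1},q^{-2},\dots)$; the specialization \eqref{eqn:SchurSpec} then gives $s_\mu(q^\varrho)=q^{-|\mu|/2-n(\mu)}\prod_{x\in Y(\mu)}(1-q^{-h(x)})^{-1}$. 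Each factor $(1-q^{-h(x)})^{-1}=1+q^{-h(x)}+\cdots$ has $\deg_q=0$, so $\deg_q s_\mu(q^\varrho)=-|\mu|/2-n(\mu)$.

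For the second factor I would use the semistandard tableau expansion $s_\nu(x)=\sum_T x^{\mathrm{wt}(T)}$. With $x_i=q^{\mu_i-i+1/2}$ the exponents $a_i:=\mu_i-i$ are strictly decreasing in $i$ and all tableau coefficients are positive, so the top $q$-power is $|\nu|/2+\max_T\sum_i a_i c_i(T)$ with no cancellation, where $c_i(T)$ counts the $i$'s in $T$. The contents of semistandard tableaux of shape $\nu$ are exactly the vectors $c$ with $\sum_{i\le k}c_i\le\nu_1+\cdots+\nu_k$ for all $k$ and $\sum_i c_i=|\nu|$; an Abel summation $\sum_i a_i c_i=a_N|\nu|+\sum_{k<N}(a_k-a_{k+1})\sum_{i\le k}c_i$, together with $a_k-a_{k+1}>0$, shows this maximum is attained \emph{uniquely} at $c=\nu$ (the superstandard tableau). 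Hence $\deg_q s_\nu(q^{\mu+\varrho})=|\nu|/2+\sum_i(\mu_i-i)\nu_i$, and adding the two contributions yields the exact formula
$$\deg_q \W_{\mu,\nu}(q)=\sum_i \mu_i\nu_i-n(\mu)-n(\nu)-\frac{|\mu|+|\nu|}{2}.$$

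It then remains to bound from below the gap $F(\mu,\nu):=\big(|\mu||\nu|-\sum_i\mu_i\nu_i\big)+n(\mu)+n(\nu)$, so that $\deg_q\W_{\mu,\nu}=|\mu||\nu|-(|\mu|+|\nu|)/2-F$. The key simplification is the identity $n(\mu)=\sum_{i<j}\mu_j$ (and likewise for $\nu$) together with $|\mu||\nu|-\sum_i\mu_i\nu_i=\sum_{i<j}(\mu_i\nu_j+\mu_j\nu_i)$, which recast $F$ as the manifestly nonnegative sum
$$F(\mu,\nu)=\sum_{i<j}\big(\mu_i\nu_j+\mu_j\nu_i+\mu_j+\nu_j\big).$$
From this, $F\ge0$ with equality iff $\mu_j=\nu_j=0$ for $j\ge2$, i.e. $\mu=(|\mu|)$ and $\nu=(|\nu|)$, giving the first assertion. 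For the refinements I would isolate the $i=1$ terms (and, for the last bound, also the $i=2$ terms). If $\mu\neq(|\mu|)$ then $\mu_2\ge1$, and $\mu_1(|\nu|-\nu_1)\ge|\nu|-\nu_1$, $\nu_1(|\mu|-\mu_1)\ge\nu_1$, $|\mu|-\mu_1\ge1$ give $F\ge|\nu|+1$, with equality forcing $n(\mu)=1,n(\nu)=0$, hence $\mu=(|\mu|-1,1),\nu=(|\nu|)$. If moreover $\mu\neq(|\mu|-1,1)$, then either $l(\mu)=2$ with $\mu_2\ge2$ or $l(\mu)\ge3$; the same grouping gives $F\ge2(|\nu|+1)$ in the first case (equality iff $\mu=(|\mu|-2,2),\nu=(|\nu|)$) and the strict bound $F\ge2|\nu|+3$ in the second. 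Substituting back converts these into the three stated estimates and their equality cases.

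I expect the main obstacle to be the second step: showing that the leading $q$-term of $s_\nu(q^{\mu+\varrho})$ comes precisely from the superstandard tableau of content $\nu$ and survives without cancellation. Once the exact degree formula is established, the remaining inequalities are elementary consequences of the bracket identity for $F$.
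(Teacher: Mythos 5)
Your proposal is correct, but it reaches the estimates by a genuinely different route than the paper. Where you diverge is in handling the second Schur factor $s_\nu(q^{\mu+\varrho})$: the paper expands it in power sums via \eqref{eqn:SchurNewton1}, observes that every $p_\eta(q^{\mu+\rho})$ has top term $q^{|\nu|\mu_1-|\nu|/2}$, and uses the character orthogonality relation $\sum_\eta \chi_\nu(\eta)/z_\eta=\delta_{\nu,(|\nu|)}$ to bound the degree by $\mu_1|\nu|-|\nu|/2$; it then invokes the symmetry $\W_{\mu,\nu}=\W_{\nu,\mu}$ to control the $\mu$-dependence, and obtains the refinements from $\mu_1\le|\mu|-k$, $n(\mu)\ge k$. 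You instead use the monomial (semistandard tableau) expansion, whose coefficients are all $+1$, so a rearrangement/Abel-summation argument with the strictly decreasing exponents $\mu_i-i$ identifies the top term without cancellation and yields the \emph{exact} formula $\deg_q\W_{\mu,\nu}=\sum_i\mu_i\nu_i-n(\mu)-n(\nu)-(|\mu|+|\nu|)/2$ (which checks against all the tabulated examples), after which all three inequalities and their equality cases reduce to elementary positivity of $F=\sum_{i<j}(\mu_i\nu_j+\mu_j\nu_i+\mu_j+\nu_j)$. Your version buys an exact degree, fully transparent equality analysis, and independence from the symmetry of $\W$ (which the paper imports from \cite{zhou2}); the paper's version buys brevity via the orthogonality trick and never needs the maximizer to be unique. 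Two small remarks: your assertion that the partial-sum inequalities characterize the contents of SSYT of shape $\nu$ ``exactly'' is false as stated (e.g.\ $c=(1,0,3)$ for $\nu=(2,2)$ satisfies them but is not a content), but this is harmless since only the forward inclusion and the realizability of $c=\nu$ are used; and in your case $l(\mu)\ge 3$ one must really keep both the $i=1$ and $i=2$ blocks of $F$ — the $i=1$ block alone gives only $F\ge|\nu|+\nu_1+2$ — but as you indicate, adding the $i=2$ block does give $F\ge 2|\nu|+3$, so the argument closes.
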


\begin{proof}
Because \eqref{eqn:SchurSpec}, we have
\be
s_{\mu}(q^{\varrho})=q^{-|\mu|/2-n(\mu)}\prod_{x\in Y(\mu)} \frac{1}{1-q^{-h(x)}},\label{sh1}
\ee
where $Y(\mu)$ is the Young tableau of $\mu$ and $h(x)$ is the hook number.
It is clear that
\be
\deg_q s_{\mu}(q^\rho) =-|\mu|/2 - n(\mu).
\ee
Note
\ben
p_m(q^{\mu+\rho})
& = & q^{-m/2} \biggl(q^{m\mu_1} + q^{m(\mu_2-1)} + \cdots
+ q^{m(\mu_l-(l-1))} + \sum_{j=l}^\infty q^{-mj} \biggr) \\
& = &  q^{-m/2} \biggl( q^{m\mu_1} + q^{m(\mu_2-1)} + \cdots
+ q^{m(\mu_l-(l-1))} + \frac{q^{-ml}}{1-q^{-m}} \biggr),
\een
so one has for all partitions $\eta$ with $|\eta| = |\nu|$,
\be
 p_{\eta}(q^{\mu+\rho}) = q^{|\nu| \cdot \mu_1 - |\nu|/2} + \cdots,
\ee
where $\cdots$ stands for lower order terms.
Hence by \eqref{eqn:SchurNewton1}:
\ben
s_{\nu}(q^{\mu+\varrho})
& = & \sum_{|\eta|=|\nu|} \frac{\chi_{\nu}(\eta)}{z_\eta} p_\eta(q^{\mu+\rho}) \\
& = & \sum_{|\eta|=|\nu|} \frac{\chi_{\nu}(\eta)}{z_\eta} q^{|\nu|\cdot \mu_1} + \cdots \\
& = & \delta_{\nu, (|\nu|)} q^{|\nu|\cdot \mu_1} + \cdots.
\een
In the last equality we have used the orthogonality relations for characters to get:
$$\sum_{|\eta| = |\nu|} \frac{\chi_{\nu}(\eta)}{z_\eta}
= \delta_{\nu, (|\nu|)}.$$
It follows that
\be
\deg_q s_{\nu}(q^{\mu+\rho}) \leq \mu_1 \cdot |\nu|-|\nu|/2
= \deg_q s_{(|\nu|)}(q^{\mu+\rho}),
\ee
with the equality holds iff $\eta =(|\eta|)$.
Therefore,
  by \eqref{w},
\be \label{eqn:Degree}
\deg_q \W_{\mu,\nu} \leq \mu_1 \cdot |\nu| - n(\mu)
- \half (|\mu|+|\nu|)
= \deg_q \W_{\mu,(|\nu|)}.
\ee
The  equality holds iff $\eta=(|\eta|)$.
Now we use the symmetry \eqref{W:Symmetry} to get:
\ben
\deg_q \W_{\mu,(|\nu||)}
& = & \deg_q \W_{(|\nu|),\mu} \leq \deg_q \W_{(|\nu|),(\mu|)} \\
& = & |\mu| \cdot |\nu| - (|\mu|+|\nu|)/2.
\een
The equality holds iff $\mu = (|\mu|)$.

If $\mu \neq (|\mu|)$, we have $\mu_1 \leq |\mu|-1$ and  $-n_\mu \leq -1$,  hence
\be
\deg_q \W_{\mu,\nu} \leq (|\mu|-1)\cdot |\nu| - 1
- \half (|\mu|+|\nu|)
= \deg_q \W_{(|\mu|-1,1),(|\nu|)}.
\ee

Moreover, if $\mu \neq (|\mu|)$ and $\mu \neq (|\mu|-1,1)$, we have $\mu_1 \leq |\mu|-2$ and  $-n_\mu \leq -2$,  hence
\be
\deg_q \W_{\mu,\nu} \leq (|\mu|-2)\cdot |\nu| - 2
- \half (|\mu|+|\nu|)
= \deg_q \W_{(|\mu|-2,2),(|\nu|)}.
\ee
\end{proof}

\subsection{Explicit formula for $\W_{(m),(n)}$}
In  last subsection
we have shown that
$$\deg_q \W_{(m),(n)} = mn-(m+n)/2.$$
In this subsection we will present an explicit formula for $\W_{(m),(n)}$.

\begin{lem}
The following identity holds:
\be \label{Wmn}
\W_{(m),(n)}(q)
= q^{mn-\frac{1}{2}(m+n)}\sum^{n}_{k=0} \frac{q^{-k(m+1)}}{[m]_{q^{-1}}![k]_{q^{-1}}!}.
\ee
\end{lem}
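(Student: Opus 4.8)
The plan is to expand the definition $\W_{(m),(n)}(q)=s_{(m)}(q^\varrho)\,s_{(n)}(q^{(m)+\varrho})$ from \eqref{w} and evaluate the two single-row Schur functions separately, using that $s_{(k)}=h_k$ is the complete homogeneous symmetric function. For the first factor I would apply the specialization \eqref{sh1}: since the single row $(m)$ has $n((m))=0$ and hook numbers $1,2,\dots,m$, this gives directly
\[
s_{(m)}(q^\varrho)=q^{-m/2}\prod_{j=1}^m\frac{1}{1-q^{-j}}=\frac{q^{-m/2}}{[m]_{q^{-1}}!}.
\]
It then remains to show that $s_{(n)}(q^{(m)+\varrho})=q^{mn-n/2}\sum_{k=0}^n q^{-k(m+1)}/[k]_{q^{-1}}!$, since multiplying the two factors and collecting the power of $q$ yields exactly \eqref{Wmn}.

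For the second factor I would use the generating function $\sum_{n\ge0}h_n(x)t^n=\prod_i(1-x_it)^{-1}$. The key observation is that the alphabet $q^{(m)+\varrho}=(q^{m-1/2},q^{-3/2},q^{-5/2},\dots)$ differs from $q^\varrho$ only in its first entry, so I would peel off the first node:
\[
\sum_{n\ge0}s_{(n)}(q^{(m)+\varrho})\,t^n=\frac{1}{1-q^{m-1/2}t}\prod_{i\ge1}\frac{1}{1-q^{-(2i+1)/2}t}.
\]
Introducing $z=q^{-1/2}t$ makes both pieces explicit: the first factor becomes $(1-q^mz)^{-1}=\sum_{l\ge0}q^{ml}z^l$, while the remaining product is $\prod_{j\ge1}(1-q^{-j}z)^{-1}$, which by the $q$-exponential identity \eqref{exp} (equivalently \eqref{eqn:EulerHProduct}) applied with base $q^{-1}$ and variable $q^{-1}z$ equals $\sum_{k\ge0}q^{-k}z^k/[k]_{q^{-1}}!$.

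The last step is to multiply these two series and read off the coefficient of $z^n$, a finite convolution giving $q^{mn}\sum_{k=0}^n q^{-k(m+1)}/[k]_{q^{-1}}!$; since $z=q^{-1/2}t$, extracting the coefficient of $t^n$ contributes the extra factor $q^{-n/2}$ and produces the claimed expression for $s_{(n)}(q^{(m)+\varrho})$. Assembling the two factors then gives \eqref{Wmn}. The content here is entirely routine once $s_{(k)}=h_k$ is invoked; the only place demanding care is the bookkeeping of the half-integer powers of $q$ coming from $\varrho$ — in particular the rescaling $z=q^{-1/2}t$ together with the homogeneity of $h_k$ — which is what ultimately assembles the prefactor $q^{mn-\half(m+n)}$. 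I do not expect any genuine obstacle beyond this accounting.
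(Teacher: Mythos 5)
Your proposal is correct and follows essentially the same route as the paper: evaluate $s_{(m)}(q^\varrho)=q^{-m/2}/[m]_{q^{-1}}!$ via the hook-length specialization, then compute $s_{(n)}(q^{(m)+\varrho})$ by peeling off the first entry of the alphabet in the generating function $\sum_n h_n t^n$, applying the $q$-exponential identity \eqref{exp} to the tail, and convolving. Your bookkeeping (including the rescaling $z=q^{-1/2}t$) is consistent and in fact matches the paper's final expression \eqref{eqn:sn(qm+rho)} exactly, whereas the paper's intermediate line displays $q^{-k/2}$ where $q^{-3k/2}$ is meant.
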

\begin{proof}
By \eqref{w}, \eqref{sh1} we have
\ben
\W_{(m),(n)}(q)= s_{(m)}(q^\varrho) s_{(n)}(q^{(m)+\varrho}).
\een
We now compute $s_{(m)}(q^\varrho)$ and $s_{(n)}(q^{(m)+\varrho})$.
Recall $s_{(m)} = h_m$,
hence
\be \label{eqn:sn(qrho)}
s_{(m)}(q^\varrho) = h_m(q^\rho) = \frac{q^{-m/2}}{[m]_{q^{-1}}!},
\ee
and
\ben
&& \sum_{n \geq 0} s_{(n)}(q^{(m)+\varrho})t^n
= \sum_{n \geq 0} h_{n}(q^{(m)+\varrho})t^n \\
& = & \frac{1}{1-q^{m-1/2}t} \cdot
\prod_{i=1}^\infty \frac{1}{1-q^{-i-1/2}t}
= \sum_{j=0}^\infty q^{mj-j/2} t^j \cdot
\sum_{j=0}^\infty \frac{q^{-k/2}}{[k]_{q^{-1}}!} t^k \\
& = & \sum_{n \geq 0} q^{-n/2} t^n \sum_{k=0}^n \frac{q^{m(n-k)-k}}{[k]_{q^{-1}}!}.
\een
Hence
\be \label{eqn:sn(qm+rho)}
s_{(n)}(q^{(m)+\varrho})
= q^{-n/2} \sum_{k=0}^n \frac{q^{mn-(m+1)k}}{[k]_{q^{-1}}!}.
\ee
This completes the proof.
\end{proof}

\section{Closed Formula for Leading Gopakumar-Vafa Invariants of Local $\PP^2$ Geometry}
\label{sec:P2}

In this section we will explicitly compute the Gopakumar-Vafa invariants $n^{(d-1)(d-2)/2-\delta}_d$
of $\kappa_{\P^2}$ for $0 \leq \delta \leq d-2$ based on duality with link invariants.
We will verify the predictions of Katz-Klemm-Vafa \cite{KKV} in this case.
This method will be generalized to the case of $0 \leq \delta \leq 2d-4$ where
no predictions of the closed formula have been made in the literature.
It will be applied to other toric Fano surfaces in Part II of this paper \cite{GZ}.

\subsection{A formula for the free energy $F^{\kappa_{\PP^2}}$}
Since $H_2(\PP^2;\Z) = \Z  H$,
where $H$ is the hyperplane class,
we will use an integer $d$ to index a homology class in $H_2(\PP^2;\Z)$
and write $K^g_d(\kappa_{\PP^2})$ for $K^g_{d H}(\kappa_{\PP^2})$.
The instanton part of the  free energy of $\kappa_{\PP^2}$ is
\be
F^{\kappa_{\PP^2}} = \sum_{g \geq 0} \lambda^{2g-2} \sum_{d > 0}
K^g_d(\kappa_{\PP^2}) \cdot t^d,
\ee
and the topological partition function of the local $\PP^2$ geometry
is defined by
$$Z^{\kappa_{\PP^2}} = \exp F^{\kappa_{\PP^2}}.$$
It can be explicitly computed \cite{AMV, I, zhou3} by  Chern-Simons invariants of Hopf link:
 \[
 Z^{\kappa_{\PP^2}}=\sum_{\mu^1,\mu^2,\mu^3}
 \W_{\mu^1,\mu^2}(q)\W_{\mu^2,\mu^3}(q)\W_{\mu^3,\mu^1}(q)\cdot
 q^{\frac{1}{2}\sum_{i=1}^3 \kappa_{\mu^i}}
 (-t)^{\sum_{i=1}^3 |\mu^i|}. \]
For a fixed $d\geq 1$,
denote the coefficient of $t^d$ in $Z^{\kappa_{\PP^2}}$ by $\I(d)$,
and denote by $F(d)$  the coefficient of $t^d$ in $F^{\kappa_{\PP^2}}$,
i.e., we have
\ben
&& Z^{\kappa_{\PP^2}} = 1+ \sum_{d \geq 1} \I(d) t^d, \\
&& F^{\kappa_{\PP^2}} =  \sum_{d \geq 1} F(d) t^d.
\een
So from $F^{\kappa_{\PP^2}} = \log Z^{\kappa_{\PP^2}}$ we have
\ben
&& F(1) = \I(1) \\
&& F(2) = \I(2)-\frac{1}{2}\I(1)^2, \\
&& F(3) = \I(3)-\I(1)I(2)+\frac{1}{3}\I(1)^3,
\een
in general
\be \label{eqn:F(d)inI(d)}
F(d) = \sum_{k \geq 1} \frac{(-1)^{k-1}}{k}
\sum_{d_1 + \cdots + d_k = d} \I(d_1) \cdots \I(d_k).
\ee
It is clear that
\be\label{Id}
\I(d)= (-1)^d \cdot \sum_{\sum_i |\mu^i|=d}
 q^{\frac{1}{2}(\sum_{i=1}^3 k_{\mu^i})}
\W_{\mu^1,\mu^2}(q)\W_{\mu^2,\mu^3}(q)\W_{\mu^3,\mu^1}(q),
\ee
so one has a way to explicitly compute all $F(d)$ and hence compute the Gopakumar-Vafa invariants
of $\PP^2$.
This has been used in \cite{AMV} to obtain a table of
Gopakumar-Vafa invariants of $\kappa_{\PP^2}$,
on which the observations in \cite{zhou1} were based.
In this section we will use it to obtain  closed formula
for some leading Gopakumar-Vafa invariants.

\subsection{Degree  estimates for $F^{\kappa_{\PP^2}}$}
Our starting point is the degree estimates of the terms in $\I(d)$
based on Lemma \ref{lm:NKappaEst} and Lemma \ref{lm:WEst}.
We regroup the terms in $\I(d)$ as follows:
\ben
\I(d)
& = & (-1)^d \cdot \sum_{\sum_i d_i=d}
q^{\frac{1}{2}\sum_i \kappa_{(d_i)}}
\W_{(d_1),(d_2)}(q)\W_{(d_2),(d_3)}(q)\W_{(d_3),(d_1)}(q) \\
& + & (-1)^d \cdot \sum_{\sum_i |\mu_i|=d}'
q^{\frac{1}{2}\sum_i \kappa_{\mu_i}}
\W_{\mu_1\mu_2}(q)\W_{\mu_2\mu_3}(q)\W_{\mu_3\mu_1}(q),
\een
where $\sum_{|\mu_1|+|\mu_2|+|\mu_3|=d}'$ means at least
one of $\mu^1, \mu^2, \mu^3$ is not of the form $(m)$.
By \eqref{Wmn},
\ben
\sum_{\sum_i d_i=d}   q^{\frac{1}{2}\sum_i \kappa_{(d_i)}} \cdot
\W_{(d_1),(d_2)}(q)\W_{(d_2),(d_3)}(q)\W_{(d_3),(d_1)}(q)
= q^{(d^2-3d)/2} W_d(q),
\een
where
\be
W_d(q)
=  \sum_{d_1+d_2+d_3=d}\sum^{d_1}_{k_1=0} \sum^{d_2}_{k_2=0}\sum^{d_3}_{k_3=0}\frac{q^{-k_2(d_1+1)}q^{-k_3(d_2+1)}q^{-k_1(d_3+1)}}
{\prod_{i=1}^3 ([d_i]_{q^{-1}}![k_i]_{q^{-1}}!)}.
\ee
In particular,
we have
\be
\deg_q \sum_{\sum_i d_i=d}
q^{\frac{1}{2}\sum_i \kappa_{(d_i)}}
\W_{(d_1),(d_2)}(q)\W_{(d_2),(d_3)}(q)\W_{(d_3),(d_1)}(q)
= (d^2-3d)/2.
\ee

\begin{lem} \label{degreeofId}
Assume that $\mu^1, \mu^2, \mu^3$ such that $|\mu^i| = d_i$, $i=1,2,3$,
and at least one of them is not of the form $(m)$.
\ben
&& \deg_q (q^{\frac{1}{2}\sum_i \kappa_{\mu_i}}
\W_{\mu_1\mu_2}(q)\W_{\mu_2\mu_3}(q)\W_{\mu_3\mu_1}(q)) \\
&  - & \deg_q( q^{\frac{1}{2}\sum_{i=1}^3 \kappa_{(d_i)}}
\cdot \W_{(d_1),(d_2)}(q)\W_{(d_2),(d_3)}(q)\W_{(d_3),(d_1)}(q)) \\
& \leq & - (d_1+d_2+d_3)-2.
\een
\end{lem}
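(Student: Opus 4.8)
The plan is to split the difference of degrees into a framing part, coming from the power $q^{\frac12\sum_i \kappa}$, and a Hopf-link part, coming from the product of three $\W$'s, and to bound each separately using the two estimates already established. Write $d = d_1+d_2+d_3$ and let $T = \{i : \mu^i \neq (d_i)\}$ be the set of indices whose partition is not a single row; by hypothesis $T \neq \emptyset$. Since the degree of a product is the sum of degrees, the quantity to be bounded equals
\[
\frac12 \sum_{i=1}^3 (\kappa_{\mu^i} - \kappa_{(d_i)}) + \sum_{(i,j)} \bigl( \deg_q \W_{\mu^i,\mu^j} - \deg_q \W_{(d_i),(d_j)} \bigr),
\]
where $(i,j)$ runs over the three cyclic edges $(1,2)$, $(2,3)$, $(3,1)$.

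First I would bound the framing part. Since $(d_i)$ is the maximal element of $\cP_{d_i}$ in the reverse lexicographic order, Lemma \ref{lm:NKappaEst} gives $\kappa_{\mu^i} \leq \kappa_{(d_i)} - 2d_i$ whenever $i \in T$, while $\kappa_{\mu^i} = \kappa_{(d_i)}$ otherwise; hence the framing part is at most $-\sum_{i \in T} d_i$.

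Next I would bound the Hopf-link part. Here $\deg_q \W_{(d_i),(d_j)} = d_id_j - (d_i+d_j)/2$ is the maximal value permitted by Lemma \ref{lm:WEst}. For an edge $(i,j)$ with $i \in T$, the second clause of Lemma \ref{lm:WEst}, applied with $\mu^i$ in the first slot, gives $\deg_q \W_{\mu^i,\mu^j} \leq d_id_j - (d_i+d_j)/2 - (d_j+1)$, so that edge loses at least $d_j+1$; the symmetry \eqref{W:Symmetry} yields the analogous loss $d_i+1$ when $j \in T$. Thus every edge incident to $T$ carries a strictly negative loss, and for an edge joining two $T$-indices either endpoint may be used.

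Finally I would combine the two estimates by cases on $|T|$. If $|T| = 1$, say $T = \{1\}$, the framing part loses $d_1$ while the two edges $(1,2)$ and $(3,1)$ incident to index $1$ lose $d_2+1$ and $d_3+1$, for a total of exactly $-(d_1+d_2+d_3)-2 = -d-2$; this is the extremal case in which equality is attained. If $|T| = 2$, say $T = \{1,2\}$, the framing loss is already $d_1+d_2$ and the edges $(2,3)$ and $(3,1)$ each lose at least $d_3+1$, giving a total at most $-(d_1+d_2) - (2d_3+2) \leq -d-2$. If $|T| = 3$, the framing loss alone is $d$ and any one edge loses at least $2$, so the total is at most $-d-2$. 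In every case the claimed bound follows. The main obstacle is simply the bookkeeping of losses over the three edges: one must use \eqref{W:Symmetry} to place the non-single-row partition in the first slot of each $\W$, and check that the case $|T|=1$ is exactly tight while the remaining cases carry genuine slack.
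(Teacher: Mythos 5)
Your proof is correct and follows essentially the same route as the paper's: split the degree drop into the framing contribution, bounded via Lemma \ref{lm:NKappaEst}, and the Hopf-link contribution, bounded edge by edge via Lemma \ref{lm:WEst} together with the symmetry \eqref{W:Symmetry}, then check the cases according to how many partitions fail to be single rows. The only difference is cosmetic: the paper writes out only the one-non-trivial-partition case in detail and asserts the other two are analogous, whereas you carry out all three via the index set $T$, which also makes the tightness of the $|T|=1$ case explicit.
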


\begin{proof}
There are three cases to consider: Case 1. Exactly one of $\mu^i$'s is not of the form $(m)$;
Case 2.  Exactly two of $\mu^i$'s are not of the form $(m)$;
Case 3.  Exactly three of $\mu^i$'s are not of the form $(m)$.
For Case 1, assume $\mu^1=(d_1)$, $\mu^2 = (d_2)$, $|\mu^3|=d_3$
but $\mu^3 \neq (d_3)$,
then by Lemma \ref{lm:NKappaEst} we have
\ben
\deg_q q^{\kappa_{\mu^3}/2} -\deg_q q^{\kappa_{(d_3)}/2} \leq  - d_3,
\een
and by Lemma \ref{lm:WEst} we have
\ben
&& \deg_q \W_{(d_2), \mu^3}(q) - \deg_q \W_{(d_2), (d_3)}(q) \leq - (d_2+1), \\
&& \deg_q \W_{\mu^3, (d_1)}(q) - \deg_q \W_{(d_3), (d_1)}(q) \leq - (d_1+1),
\een
therefore,
\ben
&& \deg_q( q^{\frac{1}{2}\sum_{i=1}^2 \kappa_{(d_i)}+\half \kappa_{\mu^3}}
\cdot \W_{(d_1),(d_2)}(q)\W_{(d_2),\mu^3}(q)\W_{\mu^3,(d_1)}(q)) \\
& - & \deg_q( q^{\frac{1}{2}\sum_{i=1}^3 \kappa_{(d_i)}}
\cdot \W_{(d_1),(d_2)}(q)\W_{(d_2),(d_3)}(q)\W_{(d_3),(d_1)}(q)) \\
& \leq & - d_3 - (d_2+1) - (d_1+1) \\
& = & - d - 2.
\een
The other two cases can be treated in the same fashion.
\end{proof}

We will use the following notations:
For a series $f(q)$ and $g(q)$ of the form
\ben
&& f(q) = a_n q^n + a_{n-1} q^{n-1} + a_{n-2} q^{n-2} + \cdots
\een
where $n \in \Z$,
and $m \in \Z$ such that $m  \leq n$,
define $f(q)|_{g^{\geq m}}$ by
\ben
 f(q) =  a_n q^n + a_{n-1} q^{n-1} + a_{n-2} q^{n-2} + \cdots + a_m q^m.
\een
If $f(q)$ and $g(q)$ are series of the above form,
and
$f(q)|_{q^{\geq m}} = g(1)|_{g^{\geq m}}$,
then we write
$$f(q) = g(q)|_{q^{\geq m}}.$$

So we have proved the following:

\begin{lem}\label{Idwd}
For $d \geq 1$,
\be \label{I(d)}
\I(d) = (-1)^d q^{(d^2-3d)/2} W_d(q)
 \biggr|_{q^{\geq (d^2-3d)/2-d-1}},
\ee
\end{lem}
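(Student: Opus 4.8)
The plan is to read the statement straight off the regrouping of $\I(d)$ carried out just above, combined with the degree bound of Lemma \ref{degreeofId}. Setting $m := (d^2-3d)/2 - d - 1$, the assertion $f(q) = g(q)|_{q^{\geq m}}$ means precisely that $f$ and $g$ share every coefficient of $q^n$ with $n \geq m$. So the only thing I need to establish is that the difference
\[
\I(d) - (-1)^d q^{(d^2-3d)/2} W_d(q)
\]
has $\deg_q$ strictly below $m$.

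First I would recall that, by the identity computed just before the lemma, $(-1)^d q^{(d^2-3d)/2} W_d(q)$ is exactly the contribution to \eqref{Id} from triples with every $\mu^i$ of single-row form $(d_i)$. Hence the displayed difference is precisely the primed remainder sum $(-1)^d\sum'$, ranging over triples in which at least one $\mu^i$ is not a single row. It therefore suffices to bound the $\deg_q$ of each individual remainder term.

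Second, I would pin down the degree of each single-row term. Using $\deg_q \W_{(a),(b)} = ab - (a+b)/2$ together with $\kappa_{(d_i)} = d_i(d_i-1)$ and $\sum_i d_i = d$, a short computation shows that every summand
\[
q^{\frac{1}{2}\sum_i \kappa_{(d_i)}} \W_{(d_1),(d_2)}(q)\W_{(d_2),(d_3)}(q)\W_{(d_3),(d_1)}(q)
\]
has degree exactly $(d^2-3d)/2$, independently of the composition $(d_1,d_2,d_3)$. Feeding this constant value into Lemma \ref{degreeofId}, each remainder term then has $\deg_q$ at most $(d^2-3d)/2 - d - 2$.

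Finally, since $\deg_q$ of a finite sum is at most the maximum of the degrees of its terms (cancellation can only lower it), the entire remainder sum has $\deg_q \leq (d^2-3d)/2 - d - 2 = m - 1 < m$, so all its coefficients of $q^n$ with $n \geq m$ vanish. This is exactly the claimed truncation identity. The only real content here is Lemma \ref{degreeofId}, which is already in hand; the remaining steps are bookkeeping, and the one point I would check carefully is that the single-row degree is the same $(d^2-3d)/2$ across all compositions, since this is what converts the \emph{relative} bound of Lemma \ref{degreeofId} into the \emph{absolute} cutoff at $m$.
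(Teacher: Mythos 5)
Your proposal is correct and follows essentially the same route as the paper: the paper derives the lemma directly from the regrouping of $\I(d)$ into the single-row part $(-1)^d q^{(d^2-3d)/2}W_d(q)$ plus the primed remainder, together with the relative degree drop of at least $d+2$ from Lemma \ref{degreeofId} and the observation that every single-row summand has degree exactly $(d^2-3d)/2$. Your explicit check that this degree is independent of the composition $(d_1,d_2,d_3)$ is precisely the point the paper uses (implicitly) to convert the relative bound into the absolute cutoff.
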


\begin{prop}
For $d \geq 1$,
$\deg_q F(d) \leq (d^2-3d)/2$.
Furthermore,
all the terms of the form $q^{(d^2-3d)/2 - \delta}$ for $0 \leq \delta \leq d-2$
come from $\I(d)$.
I.e.,
\be \label{F(d)=I(d)}
F(d) = \I(d)|_{q^{\geq (d^2-3d)/2-d+2}}.
\ee
\end{prop}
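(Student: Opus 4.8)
The plan is to read $F(d)$ off the logarithmic expansion \eqref{eqn:F(d)inI(d)} and to show that the ``nonlinear'' contributions, i.e. products of two or more factors $\I(d_i)$, are too low in degree to affect the top $d-1$ coefficients. First I would isolate the linear term:
\be
F(d) = \I(d) + \sum_{k \geq 2} \frac{(-1)^{k-1}}{k}
\sum_{\substack{d_1+\cdots+d_k=d\\ d_i \geq 1}} \I(d_1)\cdots \I(d_k),
\ee
and record that, by the degree computation preceding Lemma \ref{degreeofId} (the $(m)$-type sum has degree exactly $(d^2-3d)/2$) together with Lemma \ref{degreeofId} (every other partition contributes in degree at most $(d^2-3d)/2-d-2$), the leading term of each $\I(d_i)$ sits in degree exactly $(d_i^2-3d_i)/2$.

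Next I would exploit that $\deg_q$ is additive on products: each factor has a nonzero top coefficient, so the top coefficient of a product is again nonzero and $\deg_q(\I(d_1)\cdots\I(d_k)) = \sum_i (d_i^2-3d_i)/2$. Using $d=\sum_i d_i$ the linear parts cancel and the shortfall from the leading degree is purely quadratic:
\be
\frac{d^2-3d}{2} - \sum_{i=1}^k \frac{d_i^2-3d_i}{2}
= \frac12\Bigl(d^2 - \sum_i d_i^2\Bigr) = \sum_{i<j} d_i d_j.
\ee

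The only real step is the elementary estimate $\sum_{i<j} d_i d_j \geq d-1$ for $d_i \geq 1$ and $k \geq 2$, with equality exactly at the partition $(1,d-1)$. I would prove it by noting that $\sum_{i<j}d_id_j = \tfrac12(d^2-\sum_i d_i^2)$ is minimized by making $\sum_i d_i^2$ maximal, i.e. by pushing all parts but one down to $1$; this gives $\sum_i d_i^2 = (d-k+1)^2+(k-1)$, so $\sum_{i<j} d_i d_j \geq \tfrac12 (k-1)(2d-k)$, an expression increasing in $k$ on $2 \leq k \leq d$ whose minimal value $d-1$ occurs at $k=2$. Hence every nonlinear term has $\deg_q \leq (d^2-3d)/2 - (d-1) < (d^2-3d)/2-d+2$. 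This immediately yields $\deg_q F(d) \leq (d^2-3d)/2$, and shows that no nonlinear term contributes to any coefficient of $q^j$ with $j \geq (d^2-3d)/2-d+2$; in that range $F(d)$ and $\I(d)$ have identical coefficients, which is exactly \eqref{F(d)=I(d)}. There is no genuine obstacle here: the argument rests entirely on degree additivity and the convexity inequality above, the one point needing care being the confirmation, supplied by Lemma \ref{Idwd}, that the top coefficient of $\I(d)$ really survives in degree $(d^2-3d)/2$ rather than cancelling among the $(m)$-type contributions.
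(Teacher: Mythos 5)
Your proposal is correct and follows essentially the same route as the paper: expand $F(d)$ via \eqref{eqn:F(d)inI(d)}, use additivity of $\deg_q$ to compute $\deg_q(\I(d_1)\cdots\I(d_k)) = \tfrac12(d^2-3d) - \sum_{i<j}d_id_j$, and bound the cross terms by $\sum_{i<j}d_id_j \geq d-1$. The only cosmetic difference is that you prove this last inequality by an extremal argument where the paper invokes induction.
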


\begin{proof}
Suppose that $k>1$,
and $d_1, \dots, d_k > 0$ are positive integers such that $d_1+\cdots + d_k = d$,
then
\ben
&& \deg_q(\I(d_1) \cdots \I(d_k))
= \half [(d_1^2-3d_1) + \cdots + (d_k^2-3d_k)] \\
& = &  \half(d^2-3d) - \sum_{1 \leq i < j \leq k} d_id_j
\leq \half(d^2-3d) - (d-1).
\een
Here we have use the following inequality:
For positive integers $d_1, \dots, d_k$ ($k>1$),
one has
\be\label{didj}
\sum_{1 \leq i < j \leq k} d_id_j \geq \sum_{i=1}^k d_i - 1.
\ee
This can be easily proved by induction.
The proof is completed by noting \eqref{eqn:F(d)inI(d)}.
\end{proof}

\subsection{A vanishing result for Gopakumar-Vafa invariants for local $\PP^2$}
For the local $\PP^2$ geometry
the Gopakumar-Vafa Conjecture has been proved by Peng \cite{P} and Konishi \cite{Ko1, Ko2}.
So we have
\be
F=\sum_{d\geq 1}\sum_{g \geq 0} n^g_d (\kappa_{\PP^2}) \sum_{k>0} \frac{(-1)^{g-1}}{k}(q^{\frac{k}{2}}-q^{-\frac{k}{2}})^{2g-2}t^{kd},
\ee
for some integers $n^g_d(\kappa_{\PP^2})$.
Therefore
by comparing the coefficients of $t^d$,
one gets:
\be \label{F(d)}
F(d)= \sum_{k|d}\sum_{g \geq 0} n^g_{d/k} (\kappa_{\PP^2})  \frac{(-1)^{g-1}}{k}(q^{\frac{k}{2}}-q^{-\frac{k}{2}})^{2g-2}.
\ee
This must be a finite sum because we have
\be
\deg_q F(d) = (d^2-3d)/2.
\ee
It follows that if $n^g_{d/k} \neq 0$,
then
\be
k\cdot (g-1) \leq (d^2-3d)/2.
\ee
In particular,
taking $k=1$
we get

\begin{prop}
For the local $\PP^2$ geometry,
if $n^g_d \neq 0$,
then
\be
g \leq g(d): = (d-1)(d-2)/2.
\ee
\end{prop}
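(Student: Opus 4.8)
The plan is to read the bound directly off the top $q$-degree of the Gopakumar--Vafa multicover expansion
\[
F(d)= \sum_{k|d}\sum_{g \geq 0} n^g_{d/k}(\kappa_{\PP^2})\,\frac{(-1)^{g-1}}{k}(q^{k/2}-q^{-k/2})^{2g-2},
\]
combined with the already-established identity $\deg_q F(d)=(d^2-3d)/2$. The first step is to record the degree of each building block. Writing $q^{k/2}-q^{-k/2}=q^{-k/2}(q^k-1)$ and expanding in $\C[q,q^{-1}]]$ (downward in the powers of $q$), one checks that for all $g\geq 0$ and $k\geq 1$,
\[
\deg_q\bigl((q^{k/2}-q^{-k/2})^{2g-2}\bigr)=k(g-1);
\]
for $g\geq 1$ this is a Laurent polynomial with leading term $q^{k(g-1)}$, while for $g=0$ one expands $(q^k-1)^{-2}$ and finds top degree $-k$. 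Hence the $(g,k)$-summand contributes top degree $k(g-1)$ with leading coefficient $(-1)^{g-1}n^g_{d/k}/k$.

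The crux is a non-cancellation argument, and the primitive part $k=1$ behaves especially well: the functions $(q^{1/2}-q^{-1/2})^{2g-2}$, $g=0,1,2,\dots$, have pairwise distinct top degrees $g-1\in\{-1,0,1,2,\dots\}$. Thus among the $k=1$ summands no cancellation at the top can occur, and the top degree of their sum is governed by the largest genus with $n^g_d\neq 0$ (and is $+\infty$ if infinitely many occur). If I can show that the multicover terms ($k\geq 2$) cannot reach the leading $k=1$ contribution, then the coefficient of $q^{G-1}$ in $F(d)$, where $G=\max\{g:n^g_d\neq 0\}$, is exactly $(-1)^{G-1}n^G_d\neq 0$; together with $\deg_q F(d)=(d^2-3d)/2$ this gives $G-1\leq (d^2-3d)/2$, i.e. $G\leq (d^2-3d+2)/2=g(d)$, which is the claim.

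To control the multicover terms I would induct on $d$, taking as inductive hypothesis the Proposition for all proper divisors of $d$. For $e=d/k<d$ set $\tilde f_e(q)=\sum_{g} n^g_e(-1)^{g-1}(q^{1/2}-q^{-1/2})^{2g-2}$, so the $k$-th multicover block is $\frac1k\tilde f_{d/k}(q^k)$. The inductive hypothesis yields $\deg_q \tilde f_{d/k}\leq g(d/k)-1=\bigl((d/k)^2-3(d/k)\bigr)/2$, whence $\deg_q\bigl(\tilde f_{d/k}(q^k)\bigr)=k\,\deg_q \tilde f_{d/k}\leq (d^2/k-3d)/2$, which for $k\geq 2$ is strictly less than $(d^2-3d)/2$. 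So every multicover term has top degree below that of $F(d)$, and by the distinct-degree observation a leading $k=1$ term of degree exceeding $(d^2-3d)/2$ cannot be cancelled and would survive in $F(d)$, contradicting $\deg_q F(d)=(d^2-3d)/2$. The base case $d=1$ has no proper divisors, so $F(1)=\tilde f_1$ and the distinct-degree observation alone forces $n^g_1=0$ for $g\geq 1$. The main obstacle throughout is exactly this non-cancellation/finiteness point: a priori infinitely many $n^g_d$ could be nonzero and conspire to keep $\deg_q F(d)$ finite, and it is the pairing of the distinct top degrees of the primitive blocks with the inductive degree bound on the multicovers that rules this out.
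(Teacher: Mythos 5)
Your proof is correct and takes essentially the same route as the paper: both read the genus bound off the identity $\deg_q F(d)\leq (d^2-3d)/2$ applied to the multicover expansion, using that the $(g,k)$-block has top degree $k(g-1)$. The paper's own proof is just this degree count stated in two lines, leaving the non-cancellation among blocks implicit; your additional observations (distinct top degrees of the primitive $k=1$ blocks, plus the induction on $d$ to push the $k\geq 2$ terms strictly below $(d^2-3d)/2$) supply exactly the justification the paper omits.
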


This was empirically observed in \cite{GV, AMV}
and proved by Peng \cite{P} by similar arguments.

\subsection{Leading Gopakumar-Vafa invariants for local $\PP^2$}
With the above vanishing result one can rewrite \eqref{F(d)} as follows:
\be
F(d)= \sum_{k|d}\sum_{g = 0}^{g(d/k)} n^g_{d/k} (\kappa_{\PP^2})
 \frac{(-1)^{g-1}}{k}(q^{\frac{k}{2}}-q^{-\frac{k}{2}})^{2g-2}.
\ee
We can write it as
\be
F(d)= \sum_{k|d}F^k(d),
\ee
where
\be
F^k(d)= \sum_{g = 0}^{g(d/k)} n^g_{d/k} (\kappa_{\PP^2})
 \frac{(-1)^{g-1}}{k}(q^{\frac{k}{2}}-q^{-\frac{k}{2}})^{2g-2}.
\ee
Because for $k > 1$ and $0 \leq g \leq g(d/k)$, we have
\ben
&& \deg_q (q^{\frac{k}{2}} - q^{\frac{k}{2}})^{2g-2}
=  \frac{k}{2} \cdot (2g-2) \\
& \leq & \frac{k}{2}\cdot(2g(d/k)-2)=\frac{1}{2}(\frac{d^2}{k}-3d)\\
& = & \half (d^2-3d+2) - [\frac{d}{2}(d- \frac{d}{k}) +1]
\leq g(d) - d,
\een
moreover if $d>3$ we have
$$
\deg_q (q^{\frac{k}{2}} - q^{\frac{k}{2}})^{2g-2}\leq
g(d) - (\frac{d^2}{4}+1) \leq g(d)-(2d-3)
.
$$
So one gets,
\be \label{F(d)=N}
F(d)
= F^1(d)= \sum_{g = 0}^{g(d)} n^g_d (\kappa_{\PP^2}) {(-1)^{g-1}}(q^{\frac{1}{2}}-q^{-\frac{1}{2}})^{2g-2}
\biggr|_{q^{\geq g(d)-d+1}},
\ee
and if $d>3$
\be \label{F(d)=N2}
F(d)
= F^1(d)
\biggr|_{q^{\geq g(d)-2d+4}}.
\ee
In other words,
\be
\sum_{g = g(d)-d+1}^{g(d)} n^g_d (\kappa_{\PP^2}) {(-1)^{g-1}}(q^{\frac{1}{2}}-q^{-\frac{1}{2}})^{2g-2}
= F(d)|_{q^{\geq g(d)-d+1}}
\ee
Now we combine this equation with \eqref{F(d)=I(d)} and \eqref{I(d)} to get:
\be \label{eqn:nW}
\begin{split}
& \sum_{g = g(d)-d+2}^{g(d)} n^g_d (\kappa_{\PP^2}) {(-1)^{g-1}}(q^{\frac{1}{2}}-q^{-\frac{1}{2}})^{2g-2} \\
= & (-1)^d \cdot q^{(d^2-3d)/2}W_d(q)|_{q^{\geq g(d)-d+1}}.
\end{split}
\ee

The following easy result will be useful:

\begin{lem} Suppose that $f(q) = g(q)|_{q^{\geq m}}$.
Then

(a)  $qf(q) = qg(q)|_{q^{\geq m+1}}$.

(b) For $j >  0$, $(1-q^{-j}) f(q) = (1-q^{-j}) g(q)|_{q^{\geq m}}$.
\end{lem}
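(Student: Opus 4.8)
The plan is to unwind the truncation notation into a plain statement about coefficients and then verify the two claims by tracking, for each power of $q$, which coefficient of $f$ (equivalently of $g$) it picks up. Writing $f(q) = \sum_{n} a_n q^n$ and $g(q) = \sum_{n} b_n q^n$ as series in $\C[q,q^{-1}]]$ of the admissible form, the hypothesis $f(q) = g(q)|_{q^{\geq m}}$ says precisely that $a_n = b_n$ for every $n \geq m$, while nothing is asserted about the coefficients in degrees $< m$. The two conclusions are likewise statements of coefficient agreement in a specified range of degrees, so the whole lemma reduces to elementary index bookkeeping.

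For part (a), I would observe that multiplication by $q$ is a degree shift: the coefficient of $q^k$ in $qf(q)$ is $a_{k-1}$, and the coefficient of $q^k$ in $qg(q)$ is $b_{k-1}$. For $k \geq m+1$ one has $k-1 \geq m$, so the hypothesis gives $a_{k-1} = b_{k-1}$; hence $qf(q)$ and $qg(q)$ agree in all degrees $\geq m+1$, which is exactly the assertion $qf(q) = qg(q)|_{q^{\geq m+1}}$. The threshold rises by one precisely because the shift carries the controlled range $\{n \geq m\}$ to $\{n \geq m+1\}$.

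For part (b), I would expand $(1-q^{-j})f(q) = \sum_k (a_k - a_{k+j}) q^k$, so that the coefficient of $q^k$ is $a_k - a_{k+j}$, and similarly $b_k - b_{k+j}$ for $g$. The key point is that the factor $1-q^{-j}$ only mixes in coefficients from strictly higher degrees: for any $k \geq m$ both indices satisfy $k \geq m$ and $k+j \geq m$ (using $j>0$), so the hypothesis yields $a_k = b_k$ and $a_{k+j} = b_{k+j}$, whence $a_k - a_{k+j} = b_k - b_{k+j}$. Thus agreement holds in all degrees $\geq m$, the threshold is unchanged, and we obtain $(1-q^{-j})f(q) = (1-q^{-j})g(q)|_{q^{\geq m}}$.

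There is essentially no obstacle here; the only thing worth being careful about is the asymmetry between the two thresholds, namely that multiplication by $q$ raises the guaranteed range by one while multiplication by $1-q^{-j}$ leaves it fixed, because the latter reaches only upward into coefficients that are already controlled. I would also note in passing that the series are genuine elements of $\C[q,q^{-1}]]$, bounded above in degree, so the sums defining these coefficients are finite and all the manipulations above are legitimate.
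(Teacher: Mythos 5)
Your proof is correct and follows essentially the same route as the paper's: both arguments reduce the truncation notation to coefficientwise agreement in degrees $\geq m$ and then track how multiplication by $q$ shifts indices and how $1-q^{-j}$ only reaches into already-controlled higher-degree coefficients. Your version is, if anything, slightly cleaner than the paper's term-by-term write-out.
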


\begin{proof}
Suppose that
$f(q) = a_n q^n + a_{n-1} q^{n-1} + \cdots + a_m q^m + a_{m-1} q^{m-1} + \cdots$,
and $g(q) = a_n q^n + a_{n-1} q^{n-1} + \cdots + a_m q^m + b_{m-1} q^{m-1} + \cdots$,
then we have
\ben
&& q f(q) =  a_n q^{n+1} + a_{n-1} q^{n} + \cdots + a_m q^{m+1}
+ a_{m-1} q^{m} + \cdots, \\
&& qg(q) = a_n q^{n+1} + a_{n-1} q^{n} + \cdots + a_m q^{m+1} + b_{m} q^{m-1} + \cdots,
\een
this proves (a).
Similarly,
\ben
&& (1-q^{-j})f(q) =  a_n q^{n} + a_{n-1} q^{n-1} + \cdots + a_m q^{m}
+ a_{m-1} q^{m-1} + \cdots \\
&& \;\;\;\;\;\;\;\;\; - a_n q^{n-j} + a_{n-1} q^{n-1-j} + \cdots + a_m q^{m-j}
+ a_{m-1} q^{m-1-j} + \cdots \\
&& (1-q^{-j})g(q) =  a_n q^{n} + a_{n-1} q^{n-1} + \cdots + a_m q^{m}
+ b_{m-1} q^{m-1} + \cdots \\
&& \;\;\;\;\;\;\;\;\; - a_n q^{n-j} + a_{n-1} q^{n-1-j} + \cdots + a_m q^{m-j}
+ b_{m-1} q^{m-1-j} + \cdots,
\een
hence (b) is evident.
\end{proof}

Multiplying both sides of \eqref{eqn:nW} by $q(1-q^{-1})^2$,
we get by the above Lemma:
\be
\begin{split}
& \sum_{g = g(d)-d+2}^{g(d)} n^g_d (\kappa_{\PP^2}) {(-1)^{g-1}}(q^{\frac{1}{2}}-q^{-\frac{1}{2}})^{2g} \\
= & (-1)^d \cdot q^{g(d)} (1-q^{-1})^2 \cdot W_d(q)|_{q^{\geq g(d)-d+2}}.
\end{split}
\ee
It follows that for fixed $d$,
when $g(d)-d+2 \leq g \leq g(d)$,
$n^g_d(\kappa_{\PP^2})$ is determined by the coefficients of $q^l$ in $W_d(q)$ for $-d+2 \leq l \leq 0$.
Such invariants will be referred as the leading Gopakumar-Vafa invariants.

We will not directly evaluate $n^g_d(\kappa_{\PP^2})$,
but instead will consider the transformed Gopakumar-Vafa invariants.

\subsection{Transformed Gopakumar-Vafa invariants}
Recall the transformed Gopakumar-Vafa invariants $N_d^g(\kappa_{\PP^2})$ is defined by:
$$\sum_{g=0}^{g(d)} N_d^g (\kappa_{\PP^2}) (q^g+q^{g-2}+\cdots+q^{-g})
= \sum_{g = 0}^{g(d)} (-1)^{g}n_d^g (\kappa_{\PP^2})(q^{\frac{1}{2}}-q^{-\frac{1}{2}})^{2g},
$$
Multiply both sides by $1-q^{-2}$ to get:
\ben
&& \sum_{g=0}^{g(d)} N_d^g (\kappa_{\PP^2}) \cdot (q^g-q^{-g-2})
= (1-q^{-2})\sum_{g = 0}^{g(d)} (-1)^{g}n_d^g (\kappa_{\PP^2})(q^{\frac{1}{2}}-q^{-\frac{1}{2}})^{2g}.
\een
We take the terms of the form $q^k$ for $k \geq g(d) - d+2$ to get
\be \label{eqn:NinW}
\begin{split}
& \sum_{g=g(d)-d+2}^{g(d)} N_d^g (\kappa_{\PP^2})  q^g \\
= & (1-q^{-2}) \sum_{g = 0}^{g(d)} (-1)^{g}n_d^g (\kappa_{\PP^2})(q^{\frac{1}{2}}-q^{-\frac{1}{2}})^{2g}
\biggr|_{q^{\geq g(d)-d+2}} \\
= & (-1)^{d-1} (1-q^{-2})(1-q^{-1})^2 q^{g(d)} W_d(q)|_{q^{\geq g(d)-d+2}}.
\end{split}
\ee
So in order to find the leading transformed Gopakumar-Vafa invariants
$N^g_d(\kappa_{\PP^2})$ for $g(d)-d+2 \leq g \leq g(d)$,
we need to find the coefficients of $q^{-\delta}$ in $W_d(q)$
for $\delta \leq d-2$.

\subsection{Calculations of $W_d(q)$ by hypergeometric series}
In this subsection we will evaluate  the following summation:
\be
W_d(q)
=  \sum_{d_1+d_2+d_3=d}\sum^{d_1}_{k_1=0} \sum^{d_2}_{k_2=0}\sum^{d_3}_{k_3=0}\frac{q^{-k_2(d_1+1)}q^{-k_3(d_2+1)}q^{-k_1(d_3+1)}}
{\prod_{i=1}^3 ([d_i]_{q^{-1}}![k_i]_{q^{-1}}!)}. \label{wd}
\ee

For $m \geq 0$,
we introduce an operator $T_m^x: \C[[x]] \to \C$ as follows:
For $f(x) = a_0 + a_1 x + a_2x^2+ \cdots$,
define
\be
T_m^x f(x): = a_0 + a_1  + a_2 + \cdots + a_m.
\ee

\begin{lem}
The following identities hold:

\begin{align}
&\sum_{k=1}^m \frac{q^{-k(d+1)}}{[k]_{q^{-1}}!}
= T^x_m \biggl(\frac{1}{( xq^{-d-1};q^{-1})_\infty}-1 \biggr), \label{msum1} \\
& \frac{1}{[d_2]_{q^{-1}}!}\sum_{k=0}^{d_3} \frac{q^{-k(d_2+1)}}{[k]_{q^{-1}}!}
=  \frac{1}{[\infty]_{q^{-1}}!}
T^x_{d_3}\biggl(\frac{(q^{-d_2-1};q^{-1})_{\infty}}{(x q^{-d_2-1};q^{-1})_{\infty}}\biggr)
.\label{msum2}
\end{align}
\end{lem}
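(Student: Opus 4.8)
The plan is to read both identities as statements about the power-series coefficients (in the auxiliary variable $x$) of a single reciprocal infinite product, which are then extracted and partially summed by the operator $T_m^x$. The only analytic input required is the $q$-analog exponential formula \eqref{exp}, taken with $q$ replaced by $q^{-1}$: for a formal parameter $z$,
\be
\frac{1}{(z;q^{-1})_\infty} = \sum_{k \geq 0} \frac{z^k}{[k]_{q^{-1}}!}.
\ee
I will combine this with two elementary facts about $T_m^x$: it is $\C$-linear, and for a scalar $c$ independent of $x$ one has $T_m^x(c\cdot f(x)) = c\cdot T_m^x f(x)$, since coefficient extraction commutes with multiplication by an $x$-free factor.

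For \eqref{msum1}, I would set $z = xq^{-d-1}$ in the expansion above to obtain
\be
\frac{1}{(xq^{-d-1};q^{-1})_\infty} = \sum_{k \geq 0} \frac{q^{-k(d+1)}}{[k]_{q^{-1}}!}\, x^k,
\ee
so the coefficient of $x^k$ is precisely $q^{-k(d+1)}/[k]_{q^{-1}}!$. Subtracting $1$ kills the $k=0$ term (whose value is the empty-product factorial $1$), and applying $T_m^x$, which sums the coefficients of $x^0,\dots,x^m$, reproduces exactly $\sum_{k=1}^m q^{-k(d+1)}/[k]_{q^{-1}}!$. This settles the first identity directly.

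For \eqref{msum2}, the same expansion with $z = xq^{-d_2-1}$ gives the coefficient of $x^k$ in $1/(xq^{-d_2-1};q^{-1})_\infty$ equal to $q^{-k(d_2+1)}/[k]_{q^{-1}}!$. Multiplying by the $x$-independent scalar $(q^{-d_2-1};q^{-1})_\infty$ and applying $T_{d_3}^x$, I may pull the scalar outside the operator to get
\be
T^x_{d_3}\biggl(\frac{(q^{-d_2-1};q^{-1})_\infty}{(xq^{-d_2-1};q^{-1})_\infty}\biggr)
= (q^{-d_2-1};q^{-1})_\infty \sum_{k=0}^{d_3} \frac{q^{-k(d_2+1)}}{[k]_{q^{-1}}!}.
\ee
It then remains to identify the prefactor. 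Dividing by $[\infty]_{q^{-1}}! = (q^{-1};q^{-1})_\infty$ and writing $(q^{-d_2-1};q^{-1})_\infty = \prod_{j\geq d_2+1}(1-q^{-j})$ and $(q^{-1};q^{-1})_\infty = \prod_{j\geq 1}(1-q^{-j})$, the two infinite products telescope to the finite quotient $(q^{-d_2-1};q^{-1})_\infty/[\infty]_{q^{-1}}! = 1/\prod_{j=1}^{d_2}(1-q^{-j}) = 1/[d_2]_{q^{-1}}!$, which turns the right-hand side into the left-hand side of \eqref{msum2}.

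There is no genuine obstacle here: both identities are formal consequences of the $q$-exponential expansion and the definition of $T_m^x$. The only points that demand care are bookkeeping --- correctly carrying the substitution $q\mapsto q^{-1}$ through the $q$-factorials $[k]_{q^{-1}}!$ and the Pochhammer symbols --- and the product-ratio computation producing the prefactor in \eqref{msum2}, which is precisely where the finite factorial $[d_2]_{q^{-1}}!$ emerges as a ratio of two infinite products.
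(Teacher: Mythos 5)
Your proof is correct and follows essentially the same route as the paper's: substitute $z = xq^{-d-1}$ into the $q$-exponential identity \eqref{exp}, apply $T^x_m$ for the first identity, and for the second identity use the product-ratio observation $\frac{1}{[d_2]_{q^{-1}}!} = \frac{(q^{-d_2-1};q^{-1})_{\infty}}{[\infty]_{q^{-1}}!}$. You merely spell out the coefficient bookkeeping and the telescoping of the infinite products in more detail than the paper does.
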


\begin{proof}
Take $z = xq^{-d-1}$ in \eqref{exp},
\be
\sum_{n\geq 0} \frac{x^n q^{-n(d+1)}}{[n]_q!}
=\frac{1}{(xq^{-d-1},q)_{\infty}}.
\ee
Then one applies $T_m^x$ on both sides to get the first identity.
For the second identity,
just note
\be
 \frac{1}{[d_2]_{q^{-1}}!}=  \frac{(q^{-d_2-1};q^{-1})_{\infty}}{[\infty]_{q^{-1}}!}.
\ee
\end{proof}

By \eqref{msum2},
\ben
W_d(q)
=  \sum_{\sum_i {d_i}=d} \left(\frac{1}{[\infty]_{q^{-1}}!}
T^x_{d_3}\biggl(\frac{(q^{-d_2-1};q^{-1})_{\infty}}{(x q^{-d_2-1};q^{-1})_{\infty}}-1\biggr)+\frac{1}{[\infty]_{q^{-1}}!} \right)\\
\cdot \left(\frac{1}{[\infty]_{q^{-1}}!}
T^x_{d_2}\biggl(\frac{(q^{-d_1-1};q^{-1})_{\infty}}{(x q^{-d_1-1};q^{-1})_{\infty}}-1\biggr)+\frac{1}{[\infty]_{q^{-1}}!}\right)\\
\cdot\left(\frac{1}{[\infty]_{q^{-1}}!}
T^x_{d_1}\biggl(\frac{(q^{-d_3-1};q^{-1})_{\infty}}{(x q^{-d_3-1};q^{-1})_{\infty}}-1\biggr)+\frac{1}{[\infty]_{q^{-1}}!}\right)
\een

Now we can regroup the terms on the right-hand side of \eqref{wd} as follows:

\be
W_d(q) = W_d^1(q) + W^2_d(q) + W_d^3(q)+W^4_d(q),
\ee
where
\bea
&& W_d^1(q)= \sum_{\sum d_i=d} \frac{1}{([\infty]_{q^{-1}}!)^3}
=\frac{(d+1)(d+2)}{2\cdot ([\infty]_{q^{-1}}!)^3}, \label{eqn:W1} \\
&& W_d^2(q)= 3 \sum_{\sum d_i=d} \frac{1}{([\infty]_{q^{-1}}!)^3}  \cdot T^x_{d_2}\left(\frac{(q^{-d_1-1};q^{-1})_{\infty}}{(xq^{-d_1-1};q^{-1})_{\infty}}-1\right),
\\
&& W_d^3(q)
= 3 \sum_{\sum d_i=d} \frac{1}{([\infty]_{q^{-1}}!)^3} \cdot T^x_{d_2}
\biggl(\frac{(q^{-d_1-1};q^{-1})_{\infty}}{(xq^{-d_1-1};q^{-1})_{\infty}}-1\biggr)\\
&&\qquad \qquad
\cdot T^x_{d_3}
\biggl(\frac{(q^{-d_2-1};q^{-1})_{\infty}}{(xq^{-d_2-1};q^{-1})_{\infty}}-1\biggr), \nonumber\\
&& W_d^4(q)
= 3 \sum_{\sum d_i=d} \frac{1}{([\infty]_{q^{-1}}!)^3} \cdot T^x_{d_2}
\biggl(\frac{(q^{-d_1-1};q^{-1})_{\infty}}{(xq^{-d_1-1};q^{-1})_{\infty}}-1\biggr)\\
&&\qquad \quad
\cdot T^x_{d_3}
\biggl(\frac{(q^{-d_2-1};q^{-1})_{\infty}}{(xq^{-d_2-1};q^{-1})_{\infty}}-1\biggr)
\cdot T^x_{d_1}
\biggl(\frac{(q^{-d_3-1};q^{-1})_{\infty}}{(xq^{-d_3-1};q^{-1})_{\infty}}-1\biggr). \nonumber
\eea

\begin{lem} \label{lm:leading}
We have the following formulas for the leading term:
\bea
&& \frac{1}{[\infty]_{q^{-1}}!}-\frac{1}{[d]_{q^{-1}}!}=q^{-(d+1)}\cdot(1+a_1q^{-1}+a_2q^{-2}+\cdots), \\
&& T^x_m \biggl(\frac{(q^{-d-1};q^{-1})_{\infty}}{(x q^{-d-1};q^{-1})_{\infty}}-1 \biggr)
=q^{-(d+1)(m+1)}\cdot(b_0+b_1q^{-1} +\cdots).
\eea
\end{lem}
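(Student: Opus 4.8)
The plan is to reduce both leading-term estimates to Euler's $q$-exponential identity \eqref{exp} (with base $q^{-1}$), which replaces the awkward truncated sums by transparent ones. Throughout, all degrees refer to $\deg_q$, i.e. the highest power of $q$ in the expansion as a power series in $q^{-1}$; in this convention each of $(q^{-d-1};q^{-1})_\infty$ and $1/[n]_{q^{-1}}!$ has leading term the constant $1$. I would first record the identity $(q^{-d-1};q^{-1})_\infty = [\infty]_{q^{-1}}!/[d]_{q^{-1}}!$ already used in the proof of the preceding lemma.

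For the first formula I would factor
\be
\frac{1}{[\infty]_{q^{-1}}!}-\frac{1}{[d]_{q^{-1}}!}
= \frac{1}{[d]_{q^{-1}}!}\biggl(\frac{1}{(q^{-d-1};q^{-1})_\infty}-1\biggr),
\ee
then apply \eqref{exp} with $z=q^{-d-1}$ to get $1/(q^{-d-1};q^{-1})_\infty-1 = \sum_{n\geq 1} q^{-n(d+1)}/[n]_{q^{-1}}!$, whose leading term is the $n=1$ contribution $q^{-(d+1)}$. Multiplying by the leading term $1$ of $1/[d]_{q^{-1}}!$ produces $q^{-(d+1)}(1+a_1q^{-1}+\cdots)$, as claimed.

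For the second formula I would expand the generating function in $x$ using \eqref{exp} with $z=xq^{-d-1}$, giving
\be
\frac{(q^{-d-1};q^{-1})_\infty}{(xq^{-d-1};q^{-1})_\infty}
= (q^{-d-1};q^{-1})_\infty\sum_{n\geq 0}\frac{q^{-n(d+1)}}{[n]_{q^{-1}}!}\,x^n,
\ee
so that $T^x_m$ of the left side minus $1$ equals $(q^{-d-1};q^{-1})_\infty\sum_{n=0}^m q^{-n(d+1)}/[n]_{q^{-1}}!-1$. The crucial step is to note, by \eqref{exp} once more, that the full infinite sum collapses to $1$; subtracting then turns the truncation into a pure tail,
\be
T^x_m\biggl(\frac{(q^{-d-1};q^{-1})_\infty}{(xq^{-d-1};q^{-1})_\infty}-1\biggr)
= -(q^{-d-1};q^{-1})_\infty\sum_{n\geq m+1}\frac{q^{-n(d+1)}}{[n]_{q^{-1}}!},
\ee
whose leading term is now transparent: the dominant summand $n=m+1$ gives $q^{-(m+1)(d+1)}$, and with the leading coefficients $1$ of $(q^{-d-1};q^{-1})_\infty$ and $1/[m+1]_{q^{-1}}!$ this yields $q^{-(d+1)(m+1)}(b_0+b_1q^{-1}+\cdots)$ with $b_0=-1$.

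The step I expect to carry the real content is this last rewriting of ``truncated sum minus $1$'' as a negative tail. A head-on expansion of the truncated sum is misleading, since the naive leading terms of $(a_0-1)$ and of the $n=1$ term both sit in degree $-(d+1)$ and cancel, forcing a delicate analysis to locate the true leading degree. Recognizing via the $q$-exponential identity that the complete sum equals $1$ bypasses this cancellation and pins the leading degree at $-(d+1)(m+1)$ at once; the remaining check that the two relevant leading coefficients are $1$ is routine.
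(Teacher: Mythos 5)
Your proof is correct and follows essentially the same route as the paper: both computations reduce to the $q$-exponential identity \eqref{exp} and identify the leading term as coming from the $k=m+1$ summand of the tail. Your rewriting of ``truncated sum minus one'' as the exact negative tail $-(q^{-d-1};q^{-1})_\infty\sum_{n\geq m+1}q^{-n(d+1)}/[n]_{q^{-1}}!$ is a slightly cleaner packaging of the paper's step, which divides by the full sum and expands $\frac{1}{1+\epsilon}-1=-\epsilon+\cdots$, but the substance and the resulting leading coefficient $b_0=-1$ are identical.
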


\begin{proof}
These follow from straightforward calculations:
\ben
&& \frac{1}{[\infty]_{q^{-1}}!}-\frac{1}{[d]_{q^{-1}}!}
= \frac{1}{[d]_{q^{-1}}!}\biggl( \prod_{j=d+1}^\infty \frac{1}{1-q^{-j}} - 1\biggr) \\
& = & \frac{1}{[d]_{q^{-1}}!} \cdot (q^{-(d+1)} + \cdots)
= q^{-(d+1)} + \cdot.
\een
Similarly,
\ben
&& T^x_m\biggl(\frac{(q^{-d-1};q^{-1})_{\infty}}{(x q^{-d-1};q^{-1})_{\infty}}-1\biggr) \\
& = & T^x_m \biggl( \frac{1}{(x q^{-d-1};q^{-1})_{\infty}}
\cdot \frac{1}{\frac{1}{(q^{-d-1};q^{-1})_{\infty}}}-1 \biggr)  \\
& = & \frac{\sum_{k=0}^m \frac{q^{-k(d+1)}}{[k]_{q^{-1}}!} }
{\sum_{k=0}^\infty \frac{q^{-k(d+1)}}{[k]_{q^{-1}}!}} - 1
= \frac{1}{1 + \frac{\sum_{k=m+1}^\infty \frac{q^{-k(d+1)}}{[k]_{q^{-1}}!}}
{\sum_{k=0}^m \frac{q^{-k(d+1)}}{[k]_{q^{-1}}!} }}  - 1 \\
& = & - q^{-(m+1)(d+1)} + \cdots,
\een
where in the third identity we have used \eqref{exp}.
\end{proof}

Using Lemma \ref{lm:leading} we get
\ben
&& W_d^3(q)
=  3 \sum_{\sum d_i=d}  (q^{-(d_1+1)(d_2+1)}+ \cdots )
\cdot (-q^{-(d_2+1)(d_3+1)} + \cdots ),
\een
it follows that $\deg_q W_d^{c}(q) \leq - (d+2)$.
Similarly,
\ben
&&\deg_q W_d^4(q) = \max\{- (d_1+1)(d_2+1)-(d_2+1)(d_3+1)
\\
&& \qquad\qquad  -(d_3+1)(d_1+1): d_1+d_2+d_3=d\}
\leq  -(2d+3).
\een
Since we are concerned with the coefficients of $q^l$ for $-d+2 \leq l \leq 0$,
such part of contributions are all from $W^a_d(q)$ and $W^b_d(q)$.

\subsection{Computations of $W^2_d(q)$}

\begin{prop} \label{E1}
We have
\ben
&&\sum_{d\geq 0}  \left(\frac{1}{[\infty]_{q^{-1}}!}-\frac{1}{[d]_{q^{-1}}!} \right)
= \frac{1}{[\infty]_{q^{-1}}!} \sum_{i\geq 1} \frac{q^{-i}}{1-q^{-i}}.
\een
\end{prop}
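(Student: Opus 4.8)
The plan is to introduce an auxiliary variable $z$, assemble the left-hand side as the $z\to 1$ specialization of a generating series that can be summed in closed form via Euler's $q$-exponential identity \eqref{exp}, and then recognize the resulting limit as a logarithmic derivative which reproduces the Eisenstein-type sum on the right.

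First I would set $H(z) = \prod_{j=1}^\infty (1-zq^{-j})^{-1} = 1/(zq^{-1};q^{-1})_\infty$, so that $H(1) = 1/[\infty]_{q^{-1}}!$. Replacing $q$ by $q^{-1}$ in \eqref{exp} gives $\sum_{d\geq 0} z^d/[d]_{q^{-1}}! = 1/(z;q^{-1})_\infty$, and combining this with $\sum_{d\geq 0}z^d/[\infty]_{q^{-1}}! = 1/\bigl((1-z)[\infty]_{q^{-1}}!\bigr)$ and the factorization $(z;q^{-1})_\infty = (1-z)(zq^{-1};q^{-1})_\infty$, I would obtain the exact identity
\be
\sum_{d\geq 0} z^d \Bigl( \frac{1}{[\infty]_{q^{-1}}!} - \frac{1}{[d]_{q^{-1}}!} \Bigr)
= \frac{H(1) - H(z)}{1-z}.
\ee
Next I would pass to the limit $z\to 1$. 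By the first formula of Lemma \ref{lm:leading} the summand $1/[\infty]_{q^{-1}}! - 1/[d]_{q^{-1}}!$ has order $q^{-(d+1)}$, so in the ring of formal power series in $q^{-1}$ each coefficient of $q^{-n}$ receives contributions only from $d\leq n-1$; hence specializing $z=1$ is legitimate coefficient by coefficient, and the left-hand side becomes precisely $\sum_{d\geq 0}\bigl(1/[\infty]_{q^{-1}}! - 1/[d]_{q^{-1}}!\bigr)$. On the right, the numerator $H(1)-H(z)$ vanishes at $z=1$, so $\frac{H(1)-H(z)}{1-z}$ specializes to $H'(1)$.

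Finally I would evaluate $H'(1)$ by logarithmic differentiation: from $\log H(z) = -\sum_{j\geq 1}\log(1-zq^{-j})$ one reads off $H'(z)/H(z) = \sum_{j\geq 1} q^{-j}/(1-zq^{-j})$, and putting $z=1$ gives
\be
H'(1) = \frac{1}{[\infty]_{q^{-1}}!}\sum_{i\geq 1}\frac{q^{-i}}{1-q^{-i}},
\ee
which is exactly the asserted formula.

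The only genuine obstacle is making the passage $z\to 1$ rigorous rather than sweeping it under a convergence claim. I expect the cleanest route is to work entirely in $\C[z][[q^{-1}]]$: there the coefficient of $q^{-n}$ in $H(z)$ is the polynomial $P_n(z)=\sum_{\lambda\vdash n} z^{\ell(\lambda)}$, so the coefficient of $q^{-n}$ in $H(1)-H(z)$ is $P_n(1)-P_n(z)$, which is divisible by $(1-z)$ with quotient evaluating at $z=1$ to $P_n'(1)$. This shows both sides agree coefficientwise with $H'(1)$ and dispenses with any analytic subtlety, the degree bound from Lemma \ref{lm:leading} guaranteeing that the $z=1$ specialization of the left-hand series is well defined.
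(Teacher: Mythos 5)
Your proof is correct and follows essentially the same route as the paper: introduce an auxiliary variable, sum the series in closed form via the $q$-exponential identity and the factorization $(z;q^{-1})_\infty=(1-z)(zq^{-1};q^{-1})_\infty$, identify the $z\to 1$ limit as a derivative, and evaluate it by logarithmic differentiation. Your closing paragraph justifying the specialization $z=1$ coefficientwise in $\C[z][[q^{-1}]]$ is a welcome extra precision that the paper leaves implicit.
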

\begin{proof}
Consider
\ben
\sum_{d\geq 0} t^d \left(\frac{1}{[d]_{q^{-1}}!}-\frac{1}{[\infty]_{q^{-1}}!} \right)
&=& \frac{1}{(t,q^{-1})_{\infty}} - \frac{1}{(1-t)(q^{-1},q^{-1})_{\infty}}\\
&=&
\frac{1}{t-1}(\frac{1}{(tq^{-1},q^{-1})_{\infty}} - \frac{1}{(q^{-1},q^{-1})_{\infty}}).
\een
Hence
\ben
 \lim_{t\to 1} \sum_{d\geq 0} t^d \left(\frac{1}{[\infty]_{q^{-1}}!}-\frac{1}{[d]_{q^{-1}}!} \right)
 &=& \frac{d}{dt} \frac{1}{(tq^{-1},q^{-1})_{\infty}}  \biggr|_{t=1} \\
 &=& \frac{1}{[\infty]_{q^{-1}}!} \sum_{i\geq 1} \frac{q^{-i}}{1-q^{-i}}.
\een
\end{proof}
\begin{prop}\label{E2}
Write
\ben
W_\infty^2 = \frac{3}{[\infty]_{q^{-1}}!^3} \sum_{d_1, d_2 \geq 0}
T^x_{d_2} \left(\frac{(q^{-d_1-1};q^{-1})_{\infty}}{(xq^{-d_1-1};q^{-1})_{\infty}}-1\right),
\een
then we have:
\be \label{eqn:W2inf}
W_\infty^2 = - \frac{3}{[\infty]_{q^{-1}}!^3} \sum_{i\geq 1} \frac{q^{-i}}{(1-q^{-i})^2}.
\ee
\end{prop}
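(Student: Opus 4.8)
The plan is to collapse the double sum defining $W^2_\infty$ into a single divisor sum in three reindexing steps, after first putting the summand into closed form. Writing
\[
A(d_1,d_2) := T^x_{d_2}\Bigl(\tfrac{(q^{-d_1-1};q^{-1})_\infty}{(xq^{-d_1-1};q^{-1})_\infty}-1\Bigr),
\]
I would first record its closed form. Equation \eqref{msum2} (with $d_2\mapsto d_1$, $d_3\mapsto d_2$), together with $T^x_{d_2}(1)=1$, gives directly
\[
A(d_1,d_2) = \frac{[\infty]_{q^{-1}}!}{[d_1]_{q^{-1}}!}\sum_{k=0}^{d_2}\frac{q^{-k(d_1+1)}}{[k]_{q^{-1}}!}-1 .
\]
By Lemma \ref{lm:leading} each $A(d_1,d_2)$ has $q^{-1}$-order $(d_1+1)(d_2+1)$, so only finitely many summands contribute to any fixed power of $q^{-1}$; hence the double sum converges $q^{-1}$-adically and every rearrangement below is legitimate.

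Next I would carry out the $d_2$-summation for fixed $d_1$. Letting $d_2\to\infty$ and applying \eqref{exp} with $z=q^{-d_1-1}$ shows $A(d_1,d_2)\to 0$ and
\[
A(d_1,d_2) = -\frac{[\infty]_{q^{-1}}!}{[d_1]_{q^{-1}}!}\sum_{k=d_2+1}^\infty \frac{q^{-k(d_1+1)}}{[k]_{q^{-1}}!}.
\]
Summing over $d_2\geq 0$ and interchanging the two sums — each $k\geq 1$ is counted once for every $d_2\in\{0,\dots,k-1\}$, hence $k$ times — yields
\[
\sum_{d_2\geq 0}A(d_1,d_2) = -\frac{[\infty]_{q^{-1}}!}{[d_1]_{q^{-1}}!}\sum_{k\geq 1}\frac{k\,q^{-k(d_1+1)}}{[k]_{q^{-1}}!}.
\]

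Then I would sum over $d_1$, swap the resulting double sum, and apply \eqref{exp} to the inner $d_1$-series with $z=q^{-k}$, using $\sum_{d_1\geq 0}q^{-k(d_1+1)}/[d_1]_{q^{-1}}! = q^{-k}(q^{-k};q^{-1})_\infty^{-1} = q^{-k}[k]_{q^{-1}}!/\bigl((1-q^{-k})[\infty]_{q^{-1}}!\bigr)$. The factorials and the prefactor $[\infty]_{q^{-1}}!$ cancel, leaving
\[
\sum_{d_1,d_2\geq 0}A(d_1,d_2) = -\sum_{k\geq 1}\frac{k\,q^{-k}}{1-q^{-k}}.
\]
Finally, expanding $\frac{k q^{-k}}{1-q^{-k}}=\sum_{l\geq 1}k\,q^{-kl}$ and $\frac{q^{-i}}{(1-q^{-i})^2}=\sum_{m\geq 1}m\,q^{-im}$ and collecting by total degree $n$, both single sums equal $\sum_{n\geq 1}\sigma_1(n)q^{-n}$, so the last display equals $-\sum_{i\geq 1}\frac{q^{-i}}{(1-q^{-i})^2}$; multiplying by $3/[\infty]_{q^{-1}}!^3$ gives \eqref{eqn:W2inf}.

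I expect the main obstacle to be purely bookkeeping rather than any single hard identity: the delicate points are keeping the two interchanges of summation honest — especially the multiplicity-$k$ reindexing in the $d_2$-sum — and invoking the $q^{-1}$-adic convergence from Lemma \ref{lm:leading} that licenses them. Every individual manipulation is an instance of \eqref{exp}, of \eqref{msum2}, or of that order estimate.
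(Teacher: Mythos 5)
Your proof is correct, and it takes a genuinely different route from the paper's. The paper performs the $d_1$-summation first by Abel summation: it introduces a regulator $t^{d_1}$, applies the $q$-binomial theorem \eqref{binomial} to recognize the generating function as $\frac{(q^{-1};q^{-1})_\infty}{(xq^{-1};q^{-1})_\infty}\cdot\frac{(xq^{-1}t;q^{-1})_\infty}{(t;q^{-1})_\infty}$, evaluates the $t\to 1$ limit as a logarithmic derivative to obtain $\sum_{i\geq 1}\bigl(\frac{xq^{-i}}{1-xq^{-i}}-\frac{q^{-i}}{1-q^{-i}}\bigr)$, and only then applies $T^x_{d_2}$ and sums the resulting geometric tails over $d_2$. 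You instead eliminate $x$ at the outset via \eqref{msum2}, rewrite each summand as a tail of the $q$-exponential series, do the $d_2$-sum first by the multiplicity-$k$ reindexing, do the $d_1$-sum by a direct application of \eqref{exp}, and close with the Lambert-series identity $\sum_{k\geq 1}\frac{kq^{-k}}{1-q^{-k}}=\sum_{i\geq 1}\frac{q^{-i}}{(1-q^{-i})^2}$ (both sides being $\sum_n\sigma_1(n)q^{-n}$). All four of your steps check out, including the order estimate from Lemma \ref{lm:leading} that licenses the two interchanges. What your version buys is elementarity: no limits, no derivatives, and only the $a=0$ specialization \eqref{exp} of the binomial theorem. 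What the paper's version buys is reusability: the Abel-summation-plus-$q$-binomial template with $x$ kept as a formal marker is exactly the engine redeployed for $W^{2'}_d$, $W^3_d$, and the $\I^{(2)}$ computations in Section 6, so the authors' choice amortizes over the rest of the paper even though it is heavier here.
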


\begin{proof}We use Abel summation to carry out $\sum_{d_1 \geq 0}$.
\ben
&& \sum_{d_1 \geq 0}
\left(\frac{(q^{-d_1-1};q^{-1})_{\infty}}{(xq^{-d_1-1};q^{-1})_{\infty}}-1\right) \\
& = & \lim_{t\to 1} \sum_{d_1 \geq 0}
\left(\frac{(q^{-d_1-1};q^{-1})_{\infty}}{(xq^{-d_1-1};q^{-1})_{\infty}} t^{d_1}-t^{d_1}\right) \\
& = & \lim_{t\to 1} \biggl(\frac{(q^{-1};q^{-1})_{\infty}}{(xq^{-1};q^{-1})_{\infty}}
\sum_{d_1 \geq 0} \frac{(xq^{-1};q^{-1})_{d_1}}{(q^{-1};q^{-1})_{d_1}} t^{d_1}
- \frac{1}{1-t} \biggr) \\
& = &  \lim_{t\to 1} \biggl(\frac{(q^{-1};q^{-1})_{\infty}}{(xq^{-1};q^{-1})_{\infty}}
\cdot \frac{(xq^{-1}t;q^{-1})_{\infty}}{(t;q^{-1})_{\infty}}
- \frac{1}{1-t} \biggr).
\een
In the last equality we have used the $q$-binomial identity \eqref{binomial}.
Note
$$(t;q^{-1})_\infty = (1-t) \cdot (tq^{-1};q^{-1})_\infty.$$
So we can move forward as follows:
\ben
&& \sum_{d_1 \geq 0}
\left(\frac{(q^{-d_1-1};q^{-1})_{\infty}}{(xq^{-d_1-1};q^{-1})_{\infty}}-1\right) \\
& = & - \lim_{t\to 1} \frac{1}{t-1} \biggl(\frac{(q^{-1};q^{-1})_{\infty}}{(xq^{-1};q^{-1})_{\infty}}
\cdot \frac{(xq^{-1}t;q^{-1})_{\infty}}{(tq^{-1} ;q^{-1})_{\infty}}
- 1 \biggr) \\
& = & - \frac{d}{dt} \biggl(\frac{(q^{-1};q^{-1})_{\infty}}{(xq^{-1};q^{-1})_{\infty}}
\cdot \frac{(xq^{-1}t;q^{-1})_{\infty}}{(tq^{-1} ;q^{-1})_{\infty}}\biggr) \biggr|_{t=1} \\
& = &\sum_{i\geq 1}\left(\frac{x q^{-i}}{1- x  q^{-i}}-\frac{q^{-i}}{1-q^{-i}}\right).
\een
Note
\ben
&& T^x_{d_2} \biggl(\frac{xq^{-i}}{1- x  q^{-i}} \biggr)
= \sum_{j=1}^{d_2} x^j q^{-ij}|_{x=1} =   \sum_{j=1}^{d_2} q^{-ij}
= \frac{(1-q^{-i d_2})q^{-i}}{1-   q^{-i}},
\een
so we have
\ben
&  & \sum_{d_2 \geq 0}T^x_{d_2}  \sum_{d_1 \geq 0}
\left(\frac{(q^{-d_1-1};q^{-1})_{\infty}}{(xq^{-d_1-1};q^{-1})_{\infty}}-1\right) \\
& = & \sum_{d_2 \geq 0}T^x_{d_2} \sum_{i\geq 1}\left(\frac{x q^{-i}}{1- x  q^{-i}}-\frac{q^{-i}}{1-q^{-i}}\right) \\
& =& \sum_{d_2 \geq 0} \sum_{i\geq 1}\left(\frac{(1-q^{-i d_2})q^{-i}}{1-   q^{-i}}-\frac{q^{-i}}{1-q^{-i}}\right) \\
& = & - \sum_{d_2 \geq 0} \sum_{i\geq 1} \frac{(q^{-i})^{ (d_2+1)}}{1-   q^{-i}}
= - \sum_{i\geq 1} \frac{q^{-i}}{(1-   q^{-i})^2}.
\een
\end{proof}

As a corollary we have

\begin{prop} We have
\be
W^2_d(q)
= -\frac{3}{[\infty]_{q^{-1}}!^3} \sum_{i\geq 1} \frac{q^{-i}}{(1-   q^{-i})^2}\biggl|_{q^{\geq -d}}.
\ee
\end{prop}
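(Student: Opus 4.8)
The plan is to compare the finite sum $W^2_d(q)$ against the stabilized sum $W^2_\infty$ evaluated in Proposition \ref{E2}, and to show that the two series differ only in $q$-degrees strictly below $-d$. Granting this, the asserted formula is immediate from the closed form \eqref{eqn:W2inf}.

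First I would eliminate $d_3$ from $W^2_d(q)$. Since the summand $T^x_{d_2}\bigl(\tfrac{(q^{-d_1-1};q^{-1})_\infty}{(xq^{-d_1-1};q^{-1})_\infty}-1\bigr)$ depends only on $(d_1,d_2)$, and since for each such pair with $d_1+d_2\le d$ there is precisely one admissible $d_3=d-d_1-d_2\ge 0$, the constraint $d_1+d_2+d_3=d$ collapses to $d_1+d_2\le d$:
\[
W^2_d(q)=\frac{3}{[\infty]_{q^{-1}}!^3}\sum_{d_1+d_2\le d}
T^x_{d_2}\Bigl(\tfrac{(q^{-d_1-1};q^{-1})_\infty}{(xq^{-d_1-1};q^{-1})_\infty}-1\Bigr).
\]
Comparing with the definition of $W^2_\infty$, whose sum ranges over all $d_1,d_2\ge 0$, I see that $W^2_\infty - W^2_d$ is the same sum restricted to the complementary tail $d_1+d_2> d$.

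The crux is then a degree estimate on this tail. By the second formula of Lemma \ref{lm:leading} (taking $d_1$ in the role of $d$ and $d_2$ in the role of $m$), each summand has leading term $-q^{-(d_1+1)(d_2+1)}$, so
\[
\deg_q T^x_{d_2}\Bigl(\tfrac{(q^{-d_1-1};q^{-1})_\infty}{(xq^{-d_1-1};q^{-1})_\infty}-1\Bigr)=-(d_1+1)(d_2+1).
\]
Setting $a=d_1+1\ge 1$ and $b=d_2+1\ge 1$, the tail condition reads $a+b\ge d+3$, and the elementary inequality $ab\ge(a+b)-1$ for $a,b\ge 1$ gives $(d_1+1)(d_2+1)\ge d+2$. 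Hence every term of $W^2_\infty-W^2_d$ has $q$-degree at most $-(d+2)$. Since forming an infinite sum of series of this type can only lower, never raise, the top degree (and each fixed power of $q$ receives only finitely many contributions), I conclude $\deg_q\bigl(W^2_\infty-W^2_d\bigr)\le -(d+2)<-d$.

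It follows that $W^2_d(q)$ and $W^2_\infty$ agree in every term of degree $\ge -d$, that is $W^2_d(q)=W^2_\infty\big|_{q^{\ge -d}}$; substituting \eqref{eqn:W2inf} for $W^2_\infty$ completes the argument. The step I expect to be the only genuinely delicate point is the degree bound: one must recast the triple constraint as $d_1+d_2\le d$ so that the tail is cleanly described, and then check both that $ab\ge a+b-1$ forces the degrees down to $\le -(d+2)$ and that passing to the infinite sum does not inadvertently produce a higher-degree term.
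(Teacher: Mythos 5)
Your proof is correct and follows essentially the same route as the paper: both compare $W^2_d$ with the stabilized sum $W^2_\infty$ of Proposition \ref{E2}, identify the difference as the tail over $d_1+d_2>d$, and kill it with the degree bound $-(d_1+1)(d_2+1)$ from Lemma \ref{lm:leading}. Your explicit use of $ab\ge a+b-1$ to get $\le -(d+2)$ is marginally sharper than the paper's stated bound $\le -(d+1)$, but the argument is the same.
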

\begin{proof}
Just consider
$$
W^{2'}_d(q) = W^2_d(q) - W^2_\infty(q) = \frac{-3}{[\infty]_{q^{-1}}!^3} \sum_{d_1+ d_2 >d}
T^x_{d_2} \left(\frac{(q^{-d_1-1};q^{-1})_{\infty}}{(xq^{-d_1-1};q^{-1})_{\infty}}-1\right),
$$
by Lemma \ref{lm:leading} we have
$$
\deg_q W^{2'}_d(q) = \max\{-(d_1+1)(d_2+1) :  d_1+d_2 >d\} \leq -(d+1).
$$
\end{proof}
Combining all the results in this subsection,
we get

\begin{thm} \label{thm:Wd}
When $W_d(q)$ is expanded into a series in $\C[q,q^{-1}]]$,
one has
\be
W_d(q) = \frac{1}{[\infty]_{q^{-1}}!^3}\biggl(\binom{d+2}{2}
- 3 \sum_{i\geq 1} \frac{q^{-i}}{(1-   q^{-i})^2}\biggr)\biggl|_{q^{\geq -d}}.
\ee
\end{thm}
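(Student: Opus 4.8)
The plan is to read off the theorem from the decomposition $W_d(q) = W_d^1(q) + W_d^2(q) + W_d^3(q) + W_d^4(q)$ together with the explicit evaluations and degree bounds already obtained for the four summands. The key point is that the asserted formula is an identity only among the coefficients of $q^l$ with $l \geq -d$ (this is the meaning of the truncation symbol $\big|_{q^{\geq -d}}$), so the whole argument reduces to determining which summands can contribute a term $q^l$ with $l \geq -d$ and then adding those contributions.

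First I would discard the two negligible pieces. The degree estimates established above give $\deg_q W_d^3(q) \leq -(d+2)$ and $\deg_q W_d^4(q) \leq -(2d+3)$; since $-(d+2) < -d$ and $-(2d+3) < -d$, neither $W_d^3$ nor $W_d^4$ has any term of degree $\geq -d$, so both vanish after restriction to $q^{\geq -d}$. The remaining contributions come only from $W_d^1$ and $W_d^2$. For the first, \eqref{eqn:W1} gives the exact identity $W_d^1(q) = \frac{1}{[\infty]_{q^{-1}}!^3}\binom{d+2}{2}$, producing the binomial term. For the second, the preceding Proposition gives $W_d^2(q) = -\frac{3}{[\infty]_{q^{-1}}!^3}\sum_{i\geq 1}\frac{q^{-i}}{(1-q^{-i})^2}\big|_{q^{\geq -d}}$, producing the Eisenstein-type term. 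Because the operation of deleting all terms of degree $< -d$ is $\C$-linear on $\C[q,q^{-1}]]$, I can sum these two restricted series and pull the restriction outside, obtaining precisely $\frac{1}{[\infty]_{q^{-1}}!^3}\bigl(\binom{d+2}{2} - 3\sum_{i\geq 1}\frac{q^{-i}}{(1-q^{-i})^2}\bigr)\big|_{q^{\geq -d}}$, which is the claim.

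Since all of the analytic content---the Abel-summation and $q$-binomial evaluation of $W_d^2$ in Proposition \ref{E2}, and the hook-length degree bounds of Lemma \ref{lm:WEst} feeding the estimates on $W_d^3$ and $W_d^4$---has already been carried out, there is no serious obstacle remaining; the step is purely organizational. The one point I would verify with care is the internal consistency of the truncation notation under these manipulations: that the strict inequalities $-(d+2) < -d$ and $-(2d+3) < -d$ really force $W_d^3$ and $W_d^4$ to contribute nothing in the target range, and that summing the untruncated identity for $W_d^1$ with the truncated identity for $W_d^2$ yields a genuine termwise equality for every coefficient of $q^l$ with $l \geq -d$, as the statement requires.
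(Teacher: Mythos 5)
Your proposal is correct and follows essentially the same route as the paper, which obtains Theorem \ref{thm:Wd} by exactly this bookkeeping: the exact evaluation \eqref{eqn:W1} of $W_d^1$, the truncated evaluation of $W_d^2$ from the preceding Proposition, and the degree bounds $\deg_q W_d^3 \leq -(d+2)$, $\deg_q W_d^4 \leq -(2d+3)$ to discard the remaining two summands. Your added care about the linearity of the truncation operator and the compatibility of an exact identity with a truncated one is exactly the (unstated) content of the paper's ``combining all the results in this subsection'' step.
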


\subsection{Computation of leading transformed GV invariants}

By \eqref{eqn:NinW} we have
\be
\sum_{\delta=0}^{d-2} (-1)^{d-1}  N_d^{g(d)-\delta} (\kappa_{\PP^2})  q^{-\delta}
= (1-q^{-2})(1-q^{-1})^2  W_d(q)|_{q^{\geq -d+2}}.
\ee
Hence by Theorem \ref{thm:Wd} we get:
\be
\begin{split}
& \sum_{\delta=0}^{d-2} (-1)^{d-1}  N_d^{g(d)-\delta} (\kappa_{\PP^2})  q^{-\delta} \\
= &  \frac{[1]_{q^{-1}}^2[2]_{q^{-1}}}{[\infty]_{q^{-1}}!^3}\biggl(\binom{d+2}{2}
- 3 \sum_{i\geq 1} \frac{q^{-i}}{(1-   q^{-i})^2}\biggr)\biggl|_{q^{\geq -d+2}}.
\end{split}
\ee
In other words,
we have proved the following

\begin{thm} \label{thm:Mdelta}
For $\delta \geq 0$,
when $d \geq \delta +2$,
$M^\delta_d(\kappa_{\PP^2}):=(-1)^{d-1}  N_d^{g(d)-\delta} (\kappa_{\PP^2})$
is a quadratic polynomial $M_\delta(d)$ in $d$,
and they have the following generating series
\be
\sum_{\delta \geq 0} M_{\delta}(x) q^\delta
= \frac{[1]^2[2]}{[\infty]!^3}\biggl(\binom{x+2}{2}
- 3 \sum_{i\geq 1} \frac{q^{i}}{(1-   q^{i})^2}\biggr).
\ee
\end{thm}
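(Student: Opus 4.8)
The plan is to read the statement off directly from the identity established immediately before the theorem, namely
\be
\sum_{\delta=0}^{d-2} M^\delta_d(\kappa_{\PP^2})\, q^{-\delta}
= \frac{[1]_{q^{-1}}^2[2]_{q^{-1}}}{[\infty]_{q^{-1}}!^3}\biggl(\binom{d+2}{2}
- 3 \sum_{i\geq 1} \frac{q^{-i}}{(1-q^{-i})^2}\biggr)\biggr|_{q^{\geq -d+2}}.
\ee
The essential observation is that on the right-hand side the degree $d$ enters in two completely separate ways: \emph{polynomially}, only through the factor $\binom{d+2}{2}=\tfrac12(d+1)(d+2)$, and through the truncation bound $q^{\geq -d+2}$, which merely decides which coefficients are retained. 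The two $q$-series
\be
G(q^{-1}) := \frac{[1]_{q^{-1}}^2[2]_{q^{-1}}}{[\infty]_{q^{-1}}!^3},
\qquad
H(q^{-1}) := 3\,G(q^{-1})\sum_{i\geq 1}\frac{q^{-i}}{(1-q^{-i})^2}
\ee
are entirely \emph{independent} of $d$.

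So my first step is to extract the coefficient of $q^{-\delta}$. Writing $g_\delta := [q^{-\delta}]\,G(q^{-1})$ and $h_\delta := [q^{-\delta}]\,H(q^{-1})$, which are scalars not depending on $d$, the displayed identity gives, for every $\delta$ in the retained range $0\le\delta\le d-2$,
\be \label{eqn:MdeltaPoly}
M^\delta_d(\kappa_{\PP^2}) = \binom{d+2}{2}\,g_\delta - h_\delta.
\ee
Since $\binom{d+2}{2}$ is quadratic in $d$ and $g_\delta,h_\delta$ are fixed, the right-hand side of \eqref{eqn:MdeltaPoly} is a quadratic polynomial in $d$; I define $M_\delta(d)$ to be this polynomial. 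The truncation $q^{\geq -d+2}$ retains the coefficient of $q^{-\delta}$ precisely when $-\delta\ge -d+2$, i.e. $d\ge\delta+2$, which is exactly the asserted range of validity. This proves the first assertion.

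The second step assembles the generating series. Because \eqref{eqn:MdeltaPoly} holds for all integers $d\ge\delta+2$, it is an equality of polynomials, so I may replace $d$ by a formal variable $x$ and set $M_\delta(x)=\binom{x+2}{2}g_\delta-h_\delta$. Multiplying by $q^\delta$ and summing over $\delta\ge0$ separates as
\be
\sum_{\delta\ge0} M_\delta(x)\,q^\delta
= \binom{x+2}{2}\sum_{\delta\ge0} g_\delta\,q^\delta
- \sum_{\delta\ge0} h_\delta\,q^\delta .
\ee
Finally I use that summing the coefficients against $q^\delta$ simply undoes the substitution $q\mapsto q^{-1}$: since $g_\delta=[q^{-\delta}]\,G(q^{-1})$ one has $\sum_\delta g_\delta q^\delta = \frac{[1]^2[2]}{[\infty]!^3}$, and likewise $\sum_\delta h_\delta q^\delta = \frac{3[1]^2[2]}{[\infty]!^3}\sum_{i\ge1}\frac{q^i}{(1-q^i)^2}$. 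Substituting these yields precisely the claimed formula.

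As for the difficulty, essentially all of the genuine work is already contained in Theorem \ref{thm:Wd} and in the passage \eqref{eqn:NinW} from $W_d(q)$ to the transformed invariants, so the remaining argument is bookkeeping. The one point that requires care is matching the truncation boundary with the range of polynomiality: one must observe that the coefficient of $q^{-\delta}$ is genuinely unaffected by $d$ for \emph{all} $d\ge\delta+2$, and not merely that it stabilises for large $d$. This is what makes $M_\delta(x)$ a single polynomial agreeing with $M^\delta_d$ on the nose from $d=\delta+2$ onward, rather than only asymptotically, and it is the feature that lets the finite truncated sums for individual $d$ be reassembled into one untruncated generating series.
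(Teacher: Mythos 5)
Your proof is correct and takes essentially the same route as the paper: the paper likewise obtains the theorem by combining \eqref{eqn:NinW} with Theorem \ref{thm:Wd} to get the truncated identity you start from, and then reads the statement off coefficient by coefficient. The only difference is that you make explicit the bookkeeping the paper leaves implicit, namely that the $d$-dependence of the right-hand side enters only through the factor $\binom{d+2}{2}$ and through the truncation bound $q^{\geq -d+2}$, which is exactly what yields a single quadratic polynomial $M_\delta(d)$ valid for all $d\geq\delta+2$ and lets the truncated sums reassemble into the untruncated generating series.
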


The first few terms as in the Introduction where we use $M^\delta_d$ for $M_{\delta}(d)$ are
\bea
&& M_0(d) = \frac{1}{2}(d^2+3d+2), \\
&& M_1(d) = \frac{1}{2} (d^2+3d-4), \;\; (d \geq 3) \\
&& M_2(d) = \frac{3}{2} (d^2 +3d-6), \;\;\; (d \geq 4) \\
&& M_3(d) = 3(d^2+3d)-24, \;\;\; (d \geq 5) \\
&& M_4(d) = 6 (d^2+3d-11), \;\; (d \geq 6) \\
&& M_5(d) = \frac{21}{2}(d^2+3d)-144, \;\;\; (d \geq 7) \\
&& M_6(d) = 20(d^2+3d-16), \;\;\; (d \geq 8) \\
&& M_7(d) = \frac{67}{2}(d^2+3d)-626, \;\;\; (d \geq 9) \\
&& M_8(d) = \frac{117}{2}(d^2+3d)-1233, \;\;\; (d \geq 10).
\eea

\subsection{Closed formula for leading GV invariants of local $\PP^2$ geometry}

Recall the GV invariants $n^g_d$ and the transformed GV invariants $N^g_d$
are related as follows:
\be
\sum_{g=0}^{g(d)} (-1)^g n^g_d (q^{1/2}-q^{-1/2})^{2g}
= \sum_{g=0}^{g(d)} N^g_d (q^g+q^{g-2} + \cdots q^{-g}).
\ee
Denote by $SP(q)$ the space
of Laurent polynomials $p(q) \in \C[q,q^{-1}]$ such that
$$p(q^{-1}) = p(q).$$
Consider the following two bases of $SP(q)$:
\bea
&& S_g = (q^{1/2}-q^{-1/2})^{2g}, \label{eqn:Sg} \\
&& R_g = (q^g+q^{g-2} + \cdots + q^{-g}).
\eea

\begin{lem} \label{lm:SgRg}
The bases $\{S_g\}$ and $\{R_g\}$ are related as follows:
\bea
&& S_g = \sum_{j=0}^g (-1)^{g-j} \biggl(\binom{2g}{g-j} -  \binom{2g}{g-j-2} \biggr)R_{j}, \label{eqn:SgInRg} \\
&& R_g = \sum_{j=0}^g \binom{g+j+1}{g-j} S_j. \label{eqn:RgInSg}
\eea
\end{lem}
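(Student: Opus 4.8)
The plan is to realize both families as polynomials in the single variable $u = q + q^{-1}$ and to treat \eqref{eqn:SgInRg} and \eqref{eqn:RgInSg} as the two directions of one triangular change of basis. First I would record two elementary facts. Since $(q^{1/2}-q^{-1/2})^2 = q - 2 + q^{-1} = u-2$, one has $S_g = (u-2)^g$. And the telescoping identity $(q-q^{-1})(q^g + q^{g-2} + \cdots + q^{-g}) = q^{g+1} - q^{-(g+1)}$ gives $R_g = (q^{g+1}-q^{-(g+1)})/(q-q^{-1})$, which is a polynomial of degree $g$ in $u$ (namely the Chebyshev-type polynomial $U_g(u/2)$). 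Thus $\{S_g\}$ and $\{R_g\}$ are each triangular with respect to the $u$-degree, which re-confirms that they are bases of $SP(q)$, and the two stated formulas are mutually inverse expansions. I would prove each direction directly rather than invert a matrix, since inverting explicitly would require an auxiliary Vandermonde-type summation.

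For \eqref{eqn:SgInRg} I would argue by coefficient extraction. The binomial theorem gives
\[
S_g = (q^{1/2}-q^{-1/2})^{2g} = \sum_{m=-g}^{g} (-1)^{g-m}\binom{2g}{g-m}\, q^m,
\]
so the coefficient of $q^m$ is $a_m = (-1)^{g-m}\binom{2g}{g-m}$, which is visibly symmetric under $m \mapsto -m$. The key elementary observation is that for any $P = \sum_m a_m q^m \in SP(q)$, the coefficient of $R_j$ in the expansion $P = \sum_j c_j R_j$ is $c_j = a_j - a_{j+2}$; indeed, $R_j = \sum_{|l|\le j,\, l\equiv j\ (2)} q^l$, so for $m \ge 0$ one has $a_m = \sum_{j \ge m,\, j\equiv m\ (2)} c_j$, and this relation telescopes. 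Substituting the binomial coefficients yields $c_j = (-1)^{g-j}\bigl(\binom{2g}{g-j} - \binom{2g}{g-j-2}\bigr)$, which is exactly \eqref{eqn:SgInRg}, with the out-of-range binomials vanishing automatically.

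For \eqref{eqn:RgInSg} I would use generating functions. Summing the geometric series and simplifying gives
\[
\sum_{g\ge 0} R_g\, t^g = \frac{1}{(1-qt)(1-q^{-1}t)} = \frac{1}{1 - ut + t^2}.
\]
Writing $u = (u-2)+2$ turns the denominator into $(1-t)^2 - (u-2)t$, and expanding as a geometric series in $(u-2)t/(1-t)^2$ produces $\sum_g R_g t^g = \sum_{j\ge 0}(u-2)^j\, t^j/(1-t)^{2j+2}$. Using $1/(1-t)^{2j+2} = \sum_{n\ge 0}\binom{n+2j+1}{2j+1}t^n$ and reading off the coefficient of $t^g$ gives $\binom{g+j+1}{2j+1} = \binom{g+j+1}{g-j}$ as the coefficient of $(u-2)^j = S_j$, which is precisely \eqref{eqn:RgInSg}.

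The computations are routine once $u$ is introduced; the only points requiring care are the sign and index bookkeeping in the binomial expansion of $S_g$ and the justification of the extraction rule $c_j = a_j - a_{j+2}$ on $SP(q)$. I do not anticipate a genuine obstacle, since both identities reduce to the standard generating function $1/(1-ut+t^2)$ together with elementary binomial identities. As independent consistency checks I would verify small cases, e.g. $S_1 = R_1 - 2R_0$ from the first formula and $R_2 = 3S_0 + 4S_1 + S_2$ from the second.
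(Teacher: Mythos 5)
Your proof is correct and follows essentially the same route as the paper: the binomial expansion of $(q^{1/2}-q^{-1/2})^{2g}$ followed by comparison with the $R_j$ basis for \eqref{eqn:SgInRg}, and the generating-function identity $\sum_g R_g t^g = \sum_j S_j\, t^j/(1-t)^{2j+2}$ for \eqref{eqn:RgInSg}. Your substitution $u=q+q^{-1}$ merely makes explicit the paper's ``elementary calculation'' $R(t)=\frac{1}{(1-t)^2}S\bigl(\frac{t}{(1-t)^2}\bigr)$, and your telescoping rule $c_j=a_j-a_{j+2}$ spells out the step the paper leaves implicit.
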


\begin{proof}
By applying the binomial expansion
to \eqref{eqn:Sg} one gets:
\be
S_g =  g^g - \binom{2g}{1} q^{g-1} + \binom{2g}{2} q^{g-2}
+ \cdots + q^{-g}.
\ee
From this one gets the first identity.
To prove the second identity,
consider the generating series:
\ben
&& S(t) = \sum_{g=0}^\infty S_gt^g = \frac{1}{1-(q^{1/2}-q^{-1/2})^2t}, \\
&& R(t) = \sum_{g=0}^\infty R_gt^g
= \frac{1}{q-q^{-1}} \biggl(\frac{q}{1-qt} - \frac{q^{-1}}{1-q^{-1}t}\biggr)
= \frac{1}{(1-qt)(1-q^{-1}t)}.
\een
By an elementary calculation one can see that
\ben
R(t) = \frac{1}{(1-t)^2} S\biggl(\frac{t}{(1-t)^2}\biggr).
\een
I.e.,
\ben
\sum_{g\geq 0} R_g t^g & = & \sum_{j \geq 0} S_j \cdot \frac{t^j}{(1-t)^{2j+2}} \\
& = & \sum_{g \geq 0} t^g \sum_{j=0}^g \binom{g+j+1}{g-j} S_j.
\een
\end{proof}

As a consequence we have

\begin{lem}
The GV numbers $\{n^g_d\}$ and $\{N^g_d\}$ are related as follows:
\bea
&& N^h_d = (-1)^h \sum_{g \geq h}  \biggl(\binom{2g}{g-h} -  \binom{2g}{g-h-2} \biggr), \\
&& n^g_d = (-1)^g \sum_{h \geq g} N^h_d \binom{g+h+1}{h-g}.
\eea
\end{lem}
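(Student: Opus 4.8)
The plan is to deduce both identities directly from the change-of-basis formulas in Lemma~\ref{lm:SgRg}, invoking the linear independence of the two bases $\{S_g\}$ and $\{R_g\}$ of $SP(q)$. The starting point is the defining relation
\be
\sum_{g=0}^{g(d)} (-1)^g n^g_d S_g = \sum_{g=0}^{g(d)} N^g_d R_g.
\ee
Since $R_g$ has leading term $q^g$ and $S_g = (q^{1/2}-q^{-1/2})^{2g}$ also has leading term $q^g$, each family is linearly independent, so the coefficient of every basis element on the two sides is uniquely determined; all sums are finite because $n^g_d = 0$ for $g > g(d)$ by the vanishing result proved earlier.

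To obtain the first identity I would express each $S_g$ through the $R_j$ using \eqref{eqn:SgInRg}, substitute into the left-hand side, interchange the two finite summations, and read off the coefficient of $R_h$. This produces
\be
N^h_d = \sum_{g \geq h} (-1)^g (-1)^{g-h} n^g_d \biggl(\binom{2g}{g-h} - \binom{2g}{g-h-2}\biggr),
\ee
and the sign collapses via $(-1)^g(-1)^{g-h} = (-1)^{2g-h} = (-1)^h$, yielding the stated formula (with the weight $n^g_d$ appearing in the summand).

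For the second identity I would instead expand each $R_h$ in the $S_j$ using \eqref{eqn:RgInSg}, substitute into the right-hand side of the defining relation, interchange summations, and compare the coefficient of $S_g$. The coefficient of $S_g$ on the right is $\sum_{h \geq g} N^h_d \binom{h+g+1}{h-g}$, while on the left it is $(-1)^g n^g_d$; equating the two and multiplying through by $(-1)^g$ gives the second formula.

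There is no substantive obstacle here: the lemma is a formal corollary of Lemma~\ref{lm:SgRg}. The only points demanding attention are the sign bookkeeping in the first formula (the cancellation $(-1)^{2g-h} = (-1)^h$) and the legitimacy of interchanging the order of summation, which is immediate from the finiteness guaranteed by the vanishing $n^g_d = 0$ for $g > g(d)$.
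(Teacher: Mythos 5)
Your argument is correct and is exactly what the paper intends: the lemma is stated there as an immediate consequence of Lemma~\ref{lm:SgRg}, obtained by substituting the change-of-basis formulas into the defining relation and comparing coefficients in the two bases, just as you do. You are also right that the printed first formula omits the factor $n^g_d$ in the summand; your version with that weight restored is the intended statement.
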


Applying this result,
we have for the local $\PP^2$ geometry,
\be
n^{g(d)-\delta}_d(\kappa_{\PP^2})
= (-1)^{g(d)-\delta}\sum_{j=0}^\delta
N^{g(d)-j}_d \cdot \binom{2g(d)-\delta-j+1}{\delta-j}.
\ee
Recall $g(d) = \frac{(d-1)(d-2)}{2}$ and by Theorem \ref{thm:Mdelta},
when $d \geq \delta+2$,
the number $(-1)^{d-1}N^{g(d)-j}_d$ is a quadratic polynomial,
it follows that under the same condition,
$(-1)^{g(d)+d-1-\delta}n^{g(d)-\delta}_d(\kappa_{\PP^2})$ is a polynomial $n_\delta(d)$
given by:
\be
n_\delta(d): = \sum_{j=0}^\delta
M_j(d) \cdot \binom{d^2-3d+3-\delta-j}{\delta-j}.
\ee
The first few terms are
\bea
&& n_0(d) = \frac{1}{2}(d^2+3d+2), \\
&& n_1(d) = \frac{1}{2} d(d-1)(d^2+d-3), \;\; (d \geq 3) \\
&& n_2(d) = \frac{1}{4} (d-1)(d^5-2d^4-6d^3+9d^2+36), \;\;\; (d \geq 4)
\eea

\begin{thm} \label{thm:Ndelta}
Let $t$ and $q$ be related by
\be
t = \frac{q}{(1-q)^2}, \qquad q = \frac{1+2t-\sqrt{1+4t}}{2t}.
\ee
Then we have
\be \label{eqn:M&n}
\sum_{\delta \geq 0} n_\delta(x) t^\delta
= \frac{1}{(1-q)^{x^2-3x+2}(1-q^2)} \cdot
\sum_{j \geq 0} M_j(x) q^j
\ee
and
\be \label{eqn:NdeltaGen}
\sum_{\delta \geq 0} n_\delta(x) t^\delta
= \frac{1}{(1-q)^{x^2-3x} \cdot [\infty]!^3} \biggl(\binom{x+2}{2}
- 3 \sum_{i\geq 1} \frac{q^i}{(1-   q^i)^2} \biggr).
\ee
\end{thm}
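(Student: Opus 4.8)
The plan is to deduce the second identity \eqref{eqn:NdeltaGen} from the first identity \eqref{eqn:M&n}, and then concentrate all the work on \eqref{eqn:M&n}. Indeed, substituting the generating series of Theorem \ref{thm:Mdelta} into the right-hand side of \eqref{eqn:M&n} and using $[1]^2[2] = (1-q)^2(1-q^2)$, the factor $(1-q)^2(1-q^2)$ cancels against $\frac{1}{(1-q)^{x^2-3x+2}(1-q^2)}$ and leaves exactly $\frac{1}{(1-q)^{x^2-3x}[\infty]!^3}\bigl(\binom{x+2}{2} - 3\sum_{i\ge1}\frac{q^i}{(1-q^i)^2}\bigr)$, which is \eqref{eqn:NdeltaGen}. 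So it remains to prove \eqref{eqn:M&n}.

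First I would rewrite the definition $n_\delta(x) = \sum_{j=0}^\delta M_j(x)\binom{x^2-3x+3-\delta-j}{\delta-j}$ as a Cauchy product. Writing $g(x) = (x-1)(x-2)/2$ so that $x^2-3x+3 = 2g(x)+1$, and setting $k = \delta - j$, the generating series factors as $\sum_{\delta\ge0} n_\delta(x)t^\delta = \sum_{j\ge0} M_j(x)\,t^j\,\Psi_j(t)$, where $\Psi_j(t) = \sum_{k\ge0}\binom{N_j-k}{k}t^k$ and $N_j = 2g(x)+1-2j$. The inner sum is a Fibonacci/Chebyshev-type series, and the idea is to evaluate it in closed form after the substitution $t = q/(1-q)^2$.

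The key computation is the closed form for the truncated polynomial $\phi_N(t) := \sum_{0\le k\le N/2}\binom{N-k}{k}t^k = \frac{\alpha^{N+1}-\beta^{N+1}}{\alpha-\beta}$, where $\alpha,\beta = \frac{1}{2}(1\pm\sqrt{1+4t})$ are the roots of $\lambda^2 = \lambda + t$ (this is standard, and is essentially the transition matrix already appearing in Lemma \ref{lm:SgRg}). I would then observe that the substitution in the theorem is precisely $q = \beta/(\beta-1)$: with $t = q/(1-q)^2$ one computes $\sqrt{1+4t} = (1+q)/(1-q)$, hence $\alpha = 1/(1-q)$, $\beta = -q/(1-q)$, and $\alpha - \beta = (1+q)/(1-q)$. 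Since $N_j+1 = 2(g(x)+1-j)$ is even, $(-q)^{N_j+1} = q^{2(g(x)+1-j)}$, and a short calculation gives
\be
t^j\phi_{N_j}(t) = \frac{q^j - q^{2g(x)+2-j}}{(1-q)^{2g(x)+1}(1+q)} = \frac{q^j - q^{2g(x)+2-j}}{(1-q)^{2g(x)}(1-q^2)}.
\ee
Summing against $M_j(x)$ produces $\frac{1}{(1-q)^{2g(x)}(1-q^2)}\sum_j M_j(x)(q^j - q^{2g(x)+2-j})$, whose ``main'' part is exactly the right-hand side of \eqref{eqn:M&n}.

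The main obstacle — and the step requiring real care — is that $\Psi_j(t)$ is the full sum over all $k\ge0$, which is \emph{not} equal to the truncated polynomial $\phi_{N_j}(t)$: using $\binom{N-k}{k}=0$ for $N/2<k\le N$ and the (nonzero) negative-upper-index values for $k>N$, one only has $\Psi_j(t) = \phi_{N_j}(t) + O(t^{N_j+1})$. Likewise the tail terms $q^{2g(x)+2-j}$ contribute only in degree $\ge 2g(x)+2-j$ after the change of variables $q = t + O(t^2)$. To control both, I would argue coefficientwise in $x$: fix $\delta$, regard both sides of \eqref{eqn:M&n} as formal power series in $t$ whose coefficients are polynomials in $x$ (for the right-hand side this uses $(1-q)^{-2g(x)} = \sum_m \binom{2g(x)+m-1}{m}q^m$ together with the polynomiality of the $M_j(x)$), and then specialize $x$ to a large integer. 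For $x$ large enough that $2g(x)+2-\delta > \delta$, every correction term above lies in degree $>\delta$ in $t$, so $[t^\delta]$ of $\sum_j M_j(x)t^j\Psi_j(t)$ agrees with $[t^\delta]$ of $\frac{1}{(1-q)^{2g(x)}(1-q^2)}\sum_j M_j(x)q^j$; since the former is by construction $n_\delta(x)$, the two sides of \eqref{eqn:M&n} have equal $t^\delta$-coefficients at infinitely many integers $x$. As these coefficients are polynomials in $x$, they coincide identically, proving \eqref{eqn:M&n} and hence the theorem.
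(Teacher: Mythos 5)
Your proposal is correct, and its core is the same as the paper's: recognize the double sum as a Cauchy product and evaluate the inner Fibonacci-type sum $\sum_k\binom{N-k}{k}t^k$ via the roots $\alpha,\beta=\tfrac12(1\pm\sqrt{1+4t})$ of $\lambda^2=\lambda+t$, observing that the substitution $t=q/(1-q)^2$ gives $\alpha=1/(1-q)$, $\beta=-q/(1-q)$. The difference is in how the inner sum is closed. The paper invokes the exact formal identity \eqref{eqn:SumIdentity}, $\sum_{k\ge0}\frac{\prod_{j=k}^{2k-1}(m-j)}{k!}t^k=\frac{1}{\sqrt{1+4t}}\alpha^{m+1}$, in which the binomial coefficient is interpreted as a polynomial in $m$ and the sum runs over \emph{all} $k$; this single-branch identity (no $\beta^{m+1}$ term) makes the whole computation exact with no error terms, and the two identities of the theorem then follow by a short chain of equalities plus Theorem \ref{thm:Mdelta}. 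You instead use the truncated two-root identity $\phi_N(t)=\frac{\alpha^{N+1}-\beta^{N+1}}{\alpha-\beta}$ and must then control both the discrepancy $\Psi_j-\phi_{N_j}=O(t^{N_j+1})$ and the $q^{2g(x)+2-j}$ tails, which you do by comparing $t^\delta$-coefficients at infinitely many large integers $x$ and appealing to polynomiality in $x$. That extra argument is sound and, in fact, addresses a point the paper glosses over: \eqref{eqn:SumIdentity} is stated without proof, and one does need to know that the full polynomial-binomial sum (not just its truncation) has the claimed closed form. What the paper's route buys is brevity and an exact identity; what yours buys is that it only relies on the elementary finite Fibonacci identity, at the cost of the limiting/polynomiality step. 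Both are complete proofs.
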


\begin{proof}
We will use the following identity:
\be \label{eqn:SumIdentity}
\sum_{k \geq 0} \frac{\prod_{j=k}^{2k-1} (m-j)}{k!}t^k
= \frac{1}{\sqrt{1+4t}}\biggl(\frac{1+\sqrt{1+4t}}{2}\biggr)^{m+1}.
\ee
From this we get
\ben
\sum_{\delta \geq 0} n_\delta(x) t^\delta
& = & \sum_{\delta \geq 0} \sum_{j=0}^\delta
M_j(x) \cdot \binom{x^2-3x+3-\delta-j}{\delta-j} t^\delta \\
& = & \sum_{j \geq 0} M_j(x)q^j \sum_{k\geq 0} \binom{x^2-3x+3-2j - k}{k} t^k \\
& = & \sum_{j \geq 0} M_j(x)q^j \cdot \frac{1}{\sqrt{1+4t}}
\biggl(\frac{1+\sqrt{1+4t}}{2}\biggr)^{x^2-3x+4-2j} \\
& = &  \frac{1}{\sqrt{1+4t}}
\biggl(\frac{1+\sqrt{1+4t}}{2}\biggr)^{x^2-3x+4} \\
&& \cdot \sum_{j \geq 0} M_j(x)t^j
\biggl(\frac{1+\sqrt{1+4t}}{2}\biggr)^{-2j} \\
& = & \frac{1-q}{1+q} \cdot \frac{1}{(1-q)^{x^2-3x+4}} \cdot
\sum_{j \geq 0} M_j(x) q^j \\
& = &  \frac{1}{(1-q)^{x^2-3x+2}(1-q^2)} \cdot
\sum_{j \geq 0} M_j(x) q^j.
\een
This proves the first identity.
Now the second identity follows from the first one and Theorem \ref{thm:Mdelta} as follows:
\ben
&& \sum_{\delta \geq 0} n_\delta(x) t^\delta \\
& = & \frac{1-q}{1+q} \cdot \frac{1}{(1-q)^{x^2-3x+4}} \cdot
\frac{[1]^2[2]}{[\infty]!^3} \biggl(\binom{x+2}{2}
- 3 \sum_{i\geq 1} \frac{q^i}{(1-   q^i)^2} \biggr) \\
& = &  \frac{1}{(1-q)^{x^2-3x} \cdot [\infty]!^3} \biggl(\binom{x+2}{2}
- 3 \sum_{i\geq 1} \frac{q^i}{(1-   q^i)^2} \biggr).
\een
\end{proof}

Note there is an amazing similarity with G\"ottsche-Yau-Zaslow formula \cite{Got} (see also \eqref{eqn:GYZ}):
\ben
&& \sum_{\delta \geq 0}
T_\delta(L^2,L\cdot \kappa_S, c_1(S)^2, c_2(S)) \cdot (DG_2(q))^\delta \\
& = & \frac{(DG_2(q)/q)^{\chi(L)}B_1(q)^{K^2_S} B_2(q)^{L\cdot \kappa_S}}
{(\Delta(q)D^2G_2(q)/q^2)^{\chi(\cO_S)/2}},
\een
where $ D = q \frac{d}{d q}$,
$G_2$ is the second Eisenstein series
$$G_2(q) = -\frac{1}{24} + \sum_{n>0} \sum_{d|n} d \cdot q^n
= - \frac{1}{24} + \sum_{n=1}^\infty \frac{nq^n}{1-q^n}.$$
Note
\be
DG_2 =  \sum_{n=1}^\infty \frac{n^2q^n}{(1-q^n)^2},
\ee
and $t$ is the first term in the above expression of $DG_2$.

\subsection{Comparison with the predictions by   Katz-Klemm-Vafa}\label{sec:KKV}

In this subsection we will match our results with the prediction
of Katz-Klemm-Vafa \cite{KKV} in the case of local $\PP^2$,
where the formulas for $0 \leq \delta \leq 3$ were presented.
They have used \cite[(5.4)-(5.6)]{KKV},
We will instead use  \cite[(4.2),(4.15)]{KKV}.

Our notations are different from that in \cite{KKV}.
Denote by $\Ch^d(\PP^2)$ the Chow variety of degree $d$ plane algebraic curves.
By the genus formula,
a generic curve in this space has genus
$$g(d) = \frac{(d-1)(d-2)}{2}.$$
Each such curve is defined by a nontrivial homogeneous polynomial in three variables $x,y,z$
of degree $d$,
and the space of such polynomials have degree $\binom{d+2}{2}$,
so we know that $\Ch^d(\PP^2)$ is a projective space of dimension
\be
g(-d)-1= \frac{d^2+3d}{2}.
\ee
The prediction of \cite[(4.2)]{KKV} in the local $\PP^2$ case is
\be
n^{g(d)}_d(\kappa_{\PP^2}) = (-1)^{\dim \Ch^d(\PP^2)}e(\Ch^d(\PP^2))
= (-1)^{(d^2+3d)/2}\binom{d+2}{2}.
\ee

Denote by $\pi: \cC^d(\PP^2) \to \Ch^d(\PP^2)$ the universal curves,
and for each $j$, let $\pi^{(j)}: \cC^d(\PP^2)^{(j)} \to \Ch^d(\PP^2)$ be
the relative Hilbert schemes of $j$ points.
In other words,
each fiber of $\pi$ is a plane algebraic curve $C \subset \PP^2$,
and the corresponding fiber of $\pi^{(j)}$ is the Hilbert scheme of $j$ points on $C$.
It follows that there is a natural map
$$\varphi^{(j)}: \cC^d(\PP^2)^{(j)} \to (\PP^2)^{(j)},$$
because of a $0$-dimensional subscheme of length $j$ on a plane algebraic curve $C$
is automatically $0$-dimensional subscheme of length $j$ on $\PP^2$.
In the first paragraph of \cite[\S 8]{KKV},
the authors claimed that
 for $j \leq d+2$,
$\varphi^{(j)}$ is a fibration with fiber
$\PP^{d(d+3)/2-j}$,
then one has
\be
\begin{split}
e(\cC^d(\PP^2)^{(j)}) & = e(\PP^{d(d+3)/2-j}) \cdot e((\PP^2)^{(j)}) \\
& = \biggl( \binom{d+2}{2} - j \biggr) \cdot e((\PP^2)^{(j)}).
\end{split}
\ee
It follows that when $d \geq j-2$,
$e(\cC^d(\PP^2)^{(j)})$ is a quadratic polynomial $e_j(d)$ in $d$,
and we have the following generating series:
\be
\sum_{j \geq 0} e_j(x) q^j
= \sum_{j \geq 0} \biggl(\frac{(x+2)(x+1)}{2} - j\biggr) \cdot e((\PP^2)^{(j)}) q^j
\ee
By G\"{o}ttsche's formula,
\ben
\sum_{j \geq 0} e((\PP^2)^{(j)}) q^j
= \frac{1}{\prod_{n \geq 0} (1-q^n)^{e(\PP^2)}}
= \frac{1}{\prod_{n \geq 0} (1-q^n)^3}.
\een
By applying the operator $D = q \frac{d}{dq}$ we get:
\ben
\sum_{j \geq 0} j\cdot  e((\PP^2)^{(j)}) q^j
= - 3 \cdot \frac{1}{\prod_{n \geq 0} (1-q^n)^3} \sum_{n=1}^\infty \frac{nq^n}{1-q^n}.
\een
Hence
\be
\sum_{j \geq 0} e_j(x) q^j
= \frac{1}{[\infty]!^3}\biggl(\frac{(x+2)(x+1)}{2} - 3\sum_{n=1}^\infty \frac{nq^n}{1-q^n} \biggr).
\ee
By comparing with Theorem \ref{thm:Mdelta} we get

\begin{prop}
The polynomials $M_\delta(x)$ and $e_j(x)$ are related as follows:
\be \label{eqn:MinE}
\sum_{\delta \geq 0} M_\delta(x) q^\delta
= (1-q)^2(1-q^2) \cdot \sum_{j \geq 0} e_j(x) q^j.
\ee
\end{prop}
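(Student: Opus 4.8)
The plan is to read both sides of \eqref{eqn:MinE} directly off generating series that are already available, so that the proposition reduces to matching a common factor. By Theorem \ref{thm:Mdelta},
\[
\sum_{\delta \geq 0} M_{\delta}(x) q^\delta
= \frac{[1]^2[2]}{[\infty]!^3}\biggl(\binom{x+2}{2}
- 3 \sum_{i\geq 1} \frac{q^{i}}{(1-q^{i})^2}\biggr),
\]
and the G\"ottsche-formula computation immediately preceding this proposition gives
\[
\sum_{j \geq 0} e_j(x) q^j
= \frac{1}{[\infty]!^3}\biggl(\frac{(x+2)(x+1)}{2} - 3\sum_{n=1}^\infty \frac{nq^n}{1-q^n} \biggr).
\]
The only thing to check is that the two parenthetical factors are literally the same.

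First I would note $\binom{x+2}{2} = \frac{(x+2)(x+1)}{2}$, which disposes of the polynomial term. It then remains to verify the identity of Lambert-type series
\[
\sum_{i\geq 1} \frac{q^{i}}{(1-q^{i})^2} = \sum_{n=1}^\infty \frac{nq^n}{1-q^n}.
\]
The cleanest route is to expand each side as a power series in $q$ and read off the coefficient of $q^m$. On the left, $\frac{q^{i}}{(1-q^{i})^2} = \sum_{k\geq 1} k\, q^{ik}$, so summing over $i$ gives $\sum_{m\geq 1}\bigl(\sum_{k \mid m} k\bigr) q^m$; on the right, $\frac{nq^n}{1-q^n} = \sum_{j\geq 1} n\, q^{nj}$, so summing over $n$ gives the same divisor sum $\sum_{m\geq 1}\sigma_1(m)\,q^m$. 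Hence both series equal $\sum_{m\geq 1}\sigma_1(m)\,q^m$ (which is exactly the $G_2$ appearing in \eqref{eqn:P2Case}), and the two parenthetical factors coincide.

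With this identification the conclusion is immediate: the two displayed generating series share the common factor $\frac{1}{[\infty]!^3}\bigl(\binom{x+2}{2} - 3\sum_{i\geq 1}\frac{q^{i}}{(1-q^{i})^2}\bigr)$, and the $M_\delta$ series carries the extra numerator $[1]^2[2]$. Recalling $[1]^2[2] = (1-q)^2(1-q^2)$, dividing one formula by the other yields \eqref{eqn:MinE}. There is no genuine obstacle here: all the substantive work sits in Theorem \ref{thm:Mdelta} and in the Katz--Klemm--Vafa Euler-characteristic computation, and the only verification needed is the routine divisor-sum identity above. Conceptually, the proposition simply records that one and the same quasimodular factor $\binom{x+2}{2} - 3G_2$ governs both the transformed Gopakumar--Vafa invariants and the Euler characteristics of the relative Hilbert schemes, the discrepancy being precisely the finite factor $[1]^2[2]$ coming from the change of basis between the $\{S_g\}$ and $\{R_g\}$ expansions.
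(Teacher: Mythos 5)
Your proposal is correct and is essentially the paper's own argument: the paper also obtains \eqref{eqn:MinE} by directly comparing Theorem \ref{thm:Mdelta} with the generating series for $e_j(x)$ derived from G\"ottsche's formula, the only content being the identification of the two Lambert-series forms of $\sum_m \sigma_1(m)q^m$ and the factor $[1]^2[2]=(1-q)^2(1-q^2)$. Your explicit coefficient-by-coefficient check of the divisor-sum identity is a welcome bit of added detail, but the route is the same.
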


The prediction of \cite[(4.15)]{KKV} in the local $\PP^2$ case is
\be \label{eqn:KKVPred1}
n^{g(d)-\delta}_d(\kappa_{\PP^2}) = (-1)^{\dim \Ch^d(\PP^2)+\delta}
\sum_{j=0}^{\delta} b(g(d)-j, \delta-j) e(\cC^d(\PP^2)^{(j)}),
\ee
where
\be
b(g, k) = \begin{cases}
\frac{2}{k!} (g-1) \prod_{i=1}^{k-1} (2g-(k+2)-i), & k \geq 1, \\
1, & k = 0.
\end{cases}
\ee
Note there are some misprints in the formula for $b(g,k)$ in \cite{KKV},
but the correct expression can be found in \cite[(5.5)]{KKV}.
It is clear that
\be
b(g,k)
= \binom{2g-2-k}{k} + \binom{2g-3-k}{k-1}.
\ee
Since we have defined
the numbers
$n_\delta(d) = (-1)^{g(d)+d-1-\delta}n^{g(d)-\delta}_d(\kappa_{\PP^2})$,
we can rewrite \eqref{eqn:KKVPred1} as follows:
\be
n_\delta(d)= \sum_{j=0}^{\delta} b(g(d)-j, \delta-j) e(\cC^d(\PP^2)^{(j)}).
\ee
\begin{thm}
Let $t$ and $q$ be related by
\be
t = \frac{q}{(1-q)^2}, \qquad q = \frac{1+2t-\sqrt{1+4t}}{2t}.
\ee
Then we have
\be \label{eqn:NEC}
\sum_{\delta \geq 0} n_{\delta}(d) t^\delta \\
= \frac{1}{(1-q)^{d^2-3d}} \sum_{j \geq 0} q^j \cdot e(\cC^d(\PP^2)^{(j)}).
\ee
\end{thm}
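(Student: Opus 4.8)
The plan is to compute the generating series directly from the definition $n_\delta(d) = \sum_{j=0}^\delta b(g(d)-j,\delta-j)\,e(\cC^d(\PP^2)^{(j)})$ by interchanging the order of summation. Writing $E_j = e(\cC^d(\PP^2)^{(j)})$ and substituting $k = \delta - j$, I would factor
\be
\sum_{\delta \geq 0} n_\delta(d) t^\delta = \sum_{j \geq 0} E_j t^j \Bigl( \sum_{k \geq 0} b(g(d)-j, k)\, t^k \Bigr),
\ee
so that everything reduces to evaluating the inner series $B_h(t) := \sum_{k\geq 0} b(h,k)\, t^k$ with $h = g(d)-j$.

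The key step is to evaluate $B_h(t)$ in closed form, and here I would use the identity \eqref{eqn:SumIdentity}, noting that its summand is exactly $\binom{m-k}{k}$, so that $\sum_{k \geq 0} \binom{m-k}{k} t^k = \frac{1}{\sqrt{1+4t}}\bigl(\frac{1+\sqrt{1+4t}}{2}\bigr)^{m+1}$. Applying this with $m = 2h-2$ to the first term of $b(h,k) = \binom{2h-2-k}{k} + \binom{2h-3-k}{k-1}$, and (after the reindexing $k \mapsto k+1$, which converts the second binomial into $\binom{2h-4-k}{k}$ and produces an overall factor $t$) with $m = 2h-4$ to the second term, I would obtain two terms each of the above form.

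Next I would perform the substitution $t = q/(1-q)^2$. A short computation gives $1 + 4t = (1+q)^2/(1-q)^2$, hence $\sqrt{1+4t} = (1+q)/(1-q)$ and $\frac{1}{2}(1+\sqrt{1+4t}) = 1/(1-q)$, while $\frac{1}{\sqrt{1+4t}} = (1-q)/(1+q)$. The first term then becomes $\frac{1}{(1+q)(1-q)^{2h-2}}$ and the second, using $t = q/(1-q)^2$, becomes $\frac{q}{(1+q)(1-q)^{2h-2}}$; the two combine with a clean cancellation of the $(1+q)$ denominator to give
\be
B_h(t) = \frac{1}{(1-q)^{2h-2}}.
\ee
I expect this cancellation — which is precisely the reason the two binomial terms in $b(h,k)$ are grouped the way they are — to be the main point of the argument; everything else is routine.

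Finally I would substitute $h = g(d) - j$ and use $2g(d) - 2 = (d-1)(d-2) - 2 = d^2 - 3d$, so that $B_{g(d)-j}(t) = (1-q)^{2j}/(1-q)^{d^2-3d}$. Since $t^j (1-q)^{2j} = q^j$ by the defining relation between $t$ and $q$, the factors $t^j$ collapse to $q^j$ and the outer sum becomes
\be
\sum_{\delta \geq 0} n_\delta(d) t^\delta = \frac{1}{(1-q)^{d^2-3d}} \sum_{j \geq 0} q^j\, e(\cC^d(\PP^2)^{(j)}),
\ee
which is the claimed formula. The only genuinely nontrivial step is the evaluation of $B_h$ and its telescoping simplification; the substitution of $\sqrt{1+4t}$ and the identity $2g(d)-2 = d^2-3d$ are bookkeeping.
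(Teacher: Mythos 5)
Your proposal is correct and follows essentially the same route as the paper: interchange the summation order, evaluate the inner series via the identity \eqref{eqn:SumIdentity} applied separately to the two binomial terms of $b(h,k)$ (with the shift $k\mapsto k+1$ for the second), then substitute $t=q/(1-q)^2$ so that the $(1+q)$ denominators cancel and $t^j(1-q)^{2j}=q^j$. The only cosmetic difference is that you isolate the inner sum as a named series $B_h(t)$ and simplify it to $(1-q)^{-(2h-2)}$ before reassembling, whereas the paper carries everything in one chain of equalities; the computation is the same.
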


\begin{proof}
We use \eqref{eqn:SumIdentity} to get
\ben
&& \sum_{\delta \geq 0} n_{\delta}(d) t^\delta \\
& = & \sum_{\delta \geq 0} t^\delta
\sum_{j=0}^{\delta}\biggl( \binom{2g(d)-2-j-\delta}{\delta-j} + \binom{2g(d)-3-j-\delta}{\delta-j-1} \biggr)
 \cdot e(\cC^d(\PP^2)^{(j)}) \\
& = & \sum_{j \geq 0} t^j\cdot e(\cC^d(\PP^2)^{(j)}) \\
&& \cdot \sum_{k \geq 0} \biggl( \binom{2g-2-2j-k}{k} + \binom{2g-3-2j-k}{k-1} \biggr) t^k \\
& = & \sum_{j \geq 0} t^j \cdot e(\cC^d(\PP^2)^{(j)}) \\
&& \cdot \frac{1}{\sqrt{1+4t}}\biggl( \biggl(\frac{1+\sqrt{1+4t}}{2} \biggr)^{2g(d)-1-2j}
+ t \cdot \biggl(\frac{1+\sqrt{1+4t}}{2} \biggr)^{2g(d)-3-2j} \biggr) \\
& = & \frac{1}{(1-q)^{d^2-3d}} \sum_{j \geq 0} q^j \cdot e(\cC^d(\PP^2)^{(j)}) .
\een
In the last equality we have used $t = \frac{q}{(1-q)^2}$.
\end{proof}

Now the prediction by Klemm-Katz-Vafa \cite{KKV}
for the local $\PP^2$ geometry can be formulated as follows:
For $\delta \leq d+2$,
\be \label{eqn:KKVPred}
\begin{split}
& (-1)^{(d^2+3d)/2+\delta} n^{g(d)-\delta}_d(\kappa_{\PP^2}) \\
= &
\sum_{j=0}^{\delta}\biggl( \binom{2g(d)-2-j-\delta}{\delta-j} + \binom{2g(d)-3-j-\delta}{\delta-j-1} \biggr)\\
& \cdot \biggl( \binom{d+2}{2} - j \biggr) \cdot e((\PP^2)^{(j)}).
\end{split}
\ee

\begin{thm}
The formula in Theorem \ref{thm:Ndelta} matches with \eqref{eqn:KKVPred}.
\end{thm}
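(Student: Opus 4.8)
The plan is to verify that the author's generating series for $\sum_{\delta \geq 0} n_\delta(d) t^\delta$, obtained in Theorem \ref{thm:Ndelta}, coincides identically with the series \eqref{eqn:NEC} that encodes the Katz-Klemm-Vafa prediction. Both are stated under the same substitution $t = q/(1-q)^2$, so once the two closed forms are seen to be equal, equating coefficients of $t^\delta$ shows that the computed invariants $n^{g(d)-\delta}_d(\kappa_{\PP^2})$ agree with their KKV-predicted values, which is the assertion.

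The cleanest route passes through Proposition \eqref{eqn:MinE}. First I would take the first identity \eqref{eqn:M&n} of Theorem \ref{thm:Ndelta}, which expresses $\sum_\delta n_\delta(d) t^\delta$ as $(1-q)^{-(d^2-3d+2)}(1-q^2)^{-1} \sum_j M_j(d) q^j$, and substitute the relation $\sum_j M_j(d) q^j = (1-q)^2(1-q^2) \sum_j e_j(d) q^j$ furnished by \eqref{eqn:MinE}. The prefactor $(1-q)^2(1-q^2)$ combines with the denominator to reduce its exponent to $(1-q)^{d^2-3d}$, leaving $\sum_\delta n_\delta(d) t^\delta = (1-q)^{-(d^2-3d)} \sum_j e_j(d) q^j$.

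It then remains only to recall that for $j \leq d+2$ the fibration claim of \cite{KKV} yields $e(\cC^d(\PP^2)^{(j)}) = (\binom{d+2}{2} - j)\, e((\PP^2)^{(j)}) = e_j(d)$, so that the last expression is exactly the right-hand side of \eqref{eqn:NEC}. Hence the two series agree term by term in $t$, and matching coefficients reproduces the KKV formula \eqref{eqn:KKVPred}. Alternatively one may bypass \eqref{eqn:MinE} and compare \eqref{eqn:NdeltaGen} with \eqref{eqn:NEC} directly; the only computational input is then the Lambert-series identity $\sum_{n \geq 1} nq^n/(1-q^n) = \sum_{i \geq 1} q^i/(1-q^i)^2$, both sides equalling $\sum_m \sigma_1(m) q^m$, which converts the sum appearing in the G\"ottsche computation of $\sum_j e_j(d) q^j$ into the one appearing in \eqref{eqn:NdeltaGen}.

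I expect no obstacle of genuine substance here, since all the real work is carried by the earlier theorems; the proof is essentially a cancellation of prefactors. The only points deserving care are bookkeeping: confirming that $(1-q)^2(1-q^2)$ cancels correctly against the denominator of \eqref{eqn:M&n}, and tracking that the identification $e(\cC^d(\PP^2)^{(j)}) = e_j(d)$ holds on the range $j \leq d+2$, which comfortably contains the range $\delta \leq d-2$ on which Theorem \ref{thm:Mdelta} guarantees the $n_\delta(d)$ are the genuine polynomial invariants. On this overlap the equality of coefficients is precisely the desired match with \eqref{eqn:KKVPred}.
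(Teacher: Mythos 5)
Your proposal is correct and follows essentially the same route as the paper: both arguments reduce to observing that the fibration claim $e(\cC^d(\PP^2)^{(j)}) = \bigl(\binom{d+2}{2}-j\bigr)e((\PP^2)^{(j)})$ together with G\"ottsche's formula (and the Lambert-series identity $\sum_{n\geq 1} nq^n/(1-q^n)=\sum_{i\geq 1}q^i/(1-q^i)^2$) turns the right-hand side of \eqref{eqn:NEC} into the right-hand side of \eqref{eqn:NdeltaGen}, the prefactors cancelling exactly as you describe. Your detour through \eqref{eqn:MinE} and the first identity \eqref{eqn:M&n} is only a cosmetic repackaging of the paper's direct comparison of \eqref{eqn:NdeltaGen} with \eqref{eqn:NEC}, since \eqref{eqn:MinE} is itself obtained by comparing Theorem \ref{thm:Mdelta} with the G\"ottsche computation.
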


\begin{proof}
By  \eqref{eqn:NdeltaGen},
\ben
\sum_{\delta \geq 0} n_{\delta}(d) t^\delta
& = & \frac{1}{(1-q)^{d^2-3d}} \sum_{j \geq 0} q^j \cdot  \biggl( \binom{d+2}{2} - j \biggr) \cdot e((\PP^2)^{(j)}) \\
& = & \frac{1}{(1-q)^{d^2-3d}} \frac{1}{[\infty]!^3}\biggl(\frac{(d+2)(d+1)}{2} - 3\sum_{n=1}^\infty \frac{nq^n}{1-q^n} \biggr),
\een
which is the right-hand side of \eqref{eqn:NdeltaGen}.
\end{proof}

\subsection{Another comparison with G\"ottsche-Yau-Zaslow formula for $\PP^2$}

Let us make a comparison with the work of Kool, Shende and Thomas \cite{KST}
on the number of $\delta$-nodal curves in a generic $\delta$-dimensional linear subsystem
$\PP^\delta_d \subset |\cO_{\PP^2}(d)|$.
Let $\cC^\delta_d \to \PP^\delta_d$ be the restriction of the universal family,
and let $(\cC^\delta_d)^{(j)} \to \PP^\delta_d$ be the relative Hilbert schemes.
Then by \cite[Theorem 3.4]{KST},
there is an expansion of the form:
\be
q^{1-g(d)} \sum_{j=0}^\infty e((\cC^\delta_d)^{(j)}) q^j = \sum_{r=g(d)-\delta}^{g(d)} n_r
t^{1-r},
\ee
where $t = \frac{q}{(1-q)^2}$ is exactly the variable $t$ we use in Theorem \ref{thm:Ndelta}.
Furthermore,
the leading coefficient $n_{g(d) - \delta}$ is the number of $\delta$-nodal curves in $\PP^\delta_d$.

\subsection{Three invariants associated to three bases}

We rewrite the identity \eqref{eqn:MinE} as follows.
First we have
\ben
\sum_{\delta \geq 0} M_\delta(x) q^{\delta+1}
=  \sum_{j \geq 0} e_j(x) \cdot (1-q)^2(1-q^2)q^{j+1},
\een
then change $q$ to $q^{-1}$ to get:
\ben
\sum_{\delta \geq 0} M_\delta(x) q^{-(\delta+1)}
=  \sum_{j \geq 0} e_j(x) \cdot (1-q^{-1})^2(1-q^{-2})q^{-j-1}.
\een
We then subtract the second identity from the first one and divide both sides by $q-q^{-1}$ to get
\ben
\sum_{\delta \geq 0} M_\delta(x) R_\delta
= - \sum_{j \geq 0} e_j(x) \cdot (q^{1/2}-q^{-1/2})^2(q^{j+3}+q^{-(j+3)}),
\een
This suggests to introduce a third basis of $SP(q)$ as follows:
\be
T_g: =\begin{cases}
1, & g= 0, \\
(q^{1/2}-q^{-1/2})^2, & g =1, \\
(q^{1/2}-q^{-1/2})^2 (q^{g-1}+q^{-(g-1)}), & g \geq 2.
\end{cases}
\ee

\begin{lem} \label{lm:SgTg}
The bases $\{S_g\}$ and $\{T_g\}$ are related as follows:
\bea
&& S_g = \sum_{j=1}^{g} (-1)^{g-j} \binom{2g-2}{g-j} T_j, \\
&& T_g = \sum_{j=1}^{g} \biggl(\binom{g+j-1}{g-j} - \binom{g+j-3}{g-2-j} \biggr) S_j.
\eea
\end{lem}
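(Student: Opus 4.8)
The plan is to follow the generating-function method already used for Lemma~\ref{lm:SgRg}, reducing both directions to one functional equation between the two generating series. I would first record the geometric series
\[
S(t):=\sum_{g\geq 0}S_g t^g=\frac{1}{1-S_1 t},\qquad S_1=(q^{1/2}-q^{-1/2})^2,
\]
and then compute the $T$-series: splitting off $T_1=S_1$ and summing the geometric tails $\sum_{m\geq 1}(q^m+q^{-m})t^m=\frac{qt}{1-qt}+\frac{q^{-1}t}{1-q^{-1}t}$ gives
\[
\mathcal{T}(t):=\sum_{g\geq 1}T_g t^g=\frac{S_1\,t\,(1-t^2)}{(1-qt)(1-q^{-1}t)}.
\]
The computation that makes everything fit is the elementary identity $(1-qt)(1-q^{-1}t)=(1-t)^2-S_1 t$ (using $q+q^{-1}=S_1+2$), which recasts the above as the functional equation $\mathcal{T}(t)=(1-t^2)\bigl(S(t/(1-t)^2)-1\bigr)$, the same substitution $t\mapsto t/(1-t)^2$ that governed Lemma~\ref{lm:SgRg}.

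Granting this functional equation, the second identity is a direct coefficient extraction. I would expand $S(t/(1-t)^2)-1=\sum_{j\geq 1}S_j\,t^j(1-t)^{-2j}$, multiply by $(1-t^2)$, and use $[t^n](1-t)^{-2j}=\binom{n+2j-1}{n}$; the coefficient of $t^g$ then equals $\sum_{j}S_j\bigl(\binom{g+j-1}{g-j}-\binom{g+j-3}{g-2-j}\bigr)$, which is exactly the claimed expression for $T_g$.

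For the first identity I would invert the substitution. Writing $\sigma=\sqrt{1+4t}$, I would check that $w^\ast:=(\sigma-1)/(\sigma+1)$ is the compositional inverse of $t\mapsto t/(1-t)^2$, i.e. $w^\ast/(1-w^\ast)^2=t$, which follows from $\sigma^2-1=4t$. I would then evaluate the alternating sum $\sum_{g\geq j}(-1)^{g-j}\binom{2g-2}{g-j}t^g$ using the standard generating function $\sum_{k\geq 0}\binom{a+2k}{k}x^k=(1-4x)^{-1/2}\bigl(\tfrac{1-\sqrt{1-4x}}{2x}\bigr)^{a}$ (of the same square-root type as \eqref{eqn:SumIdentity}) with $a=2j-2$ and $x=-t$, and recognize the outcome as $\frac{4t^2}{\sigma(\sigma-1)^2}(w^\ast)^j$ with a $j$-independent prefactor. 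Summing against $T_j$ therefore produces $\frac{4t^2}{\sigma(\sigma-1)^2}\mathcal{T}(w^\ast)$, and feeding $w=w^\ast$ into the functional equation gives $\mathcal{T}(w^\ast)=(1-(w^\ast)^2)(S(t)-1)$. Since $1-(w^\ast)^2=4\sigma/(\sigma+1)^2$ and $(\sigma^2-1)^2=16t^2$, the prefactors collapse to $1$, leaving $\sum_{g\geq 1}S_g t^g$; comparing coefficients of $t^g$ yields the first identity.

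The main obstacle is exactly this last direction: the first identity forces me through the square-root generating function and the inverse substitution $w^\ast$, and it succeeds only because of the two algebraic coincidences $w^\ast/(1-w^\ast)^2=t$ and $(\sigma^2-1)^2=16t^2$. A fallback I would keep in reserve is to prove only the second identity by coefficient extraction and then deduce the first by verifying that the two coefficient matrices are mutual inverses; but that merely trades the analytic collapse for an equally delicate binomial summation identity, so I expect the generating-function argument above to give the cleaner writeup.
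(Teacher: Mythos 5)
Your proof is correct, but it takes a genuinely different route from the paper's. The paper proves the first identity in one line: write $S_g = (q^{1/2}-q^{-1/2})^2\cdot(q^{1/2}-q^{-1/2})^{2g-2}$, expand the second factor by the binomial theorem, and pair the symmetric powers of $q$ --- each pair $q^{j-1}+q^{-(j-1)}$ times the prefactor is exactly $T_j$, so the coefficients $(-1)^{g-j}\binom{2g-2}{g-j}$ drop out immediately. For the second identity the paper writes $T_g=(q^{1/2}-q^{-1/2})^2(R_{g-1}-R_{g-3})$ and substitutes the already-proved expansion \eqref{eqn:RgInSg} of $R_g$ in the $S_j$, using $S_1S_j=S_{j+1}$ to shift indices. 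You instead derive both directions from the single functional equation $\mathcal{T}(t)=(1-t^2)\bigl(S(t/(1-t)^2)-1\bigr)$: your coefficient extraction for $T_g$ is a clean, self-contained alternative to the paper's reduction to Lemma \ref{lm:SgRg}, and your computations check out (in particular $(1-qt)(1-q^{-1}t)=(1-t)^2-S_1t$, $w^\ast/(1-w^\ast)^2=t$, and the final collapse $\tfrac{16t^2}{(\sigma^2-1)^2}=1$ are all correct, and I verified the small cases $S_2=T_2-2T_1$, $T_2=S_2+2S_1$). What your approach buys is uniformity --- both identities follow from one substitution of the same type as the one behind Lemma \ref{lm:SgRg} --- at the cost of making the first identity, which is nearly trivial by direct expansion, pass through the square-root generating function and the inverse substitution $w^\ast=(\sigma-1)/(\sigma+1)$. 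If you adopt the paper's one-line binomial argument for $S_g$ in terms of the $T_j$ and keep your coefficient extraction for the other direction, you get the shortest combined writeup.
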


\begin{proof}
The first identity follows easily from the definitions by writing
$S_g = (q^{1/2}-q^{-1/2})^2\cdot (q^{1/2}-q^{-1/2})^{2g-2}$ and applying the binomial expansion
to $(q^{1/2}-q^{-1/2})^{2g-2}$.
For the second identity we apply \eqref{eqn:RgInSg} to get
\ben
T_g & = & (q^{1/2}-q^{-1/2})^2\cdot (R_{g-1} - R_{g-3}) \\
& = & (q^{1/2}-q^{-1/2})^2\cdot \biggl(\sum_{j=0}^{g-1} \binom{g+j}{g-1-j} S_j
- \sum_{j=0}^{g-3} \binom{g+j-2}{g-3-j} S_j \biggr) \\
& = & \sum_{j=0}^{g-1} \biggl(\binom{g+j}{g-1-j} - \binom{g+j-2}{g-3-j} \biggr) S_{j+1} \\
& = & \sum_{j=1}^{g} \biggl(\binom{g+j-1}{g-j} - \binom{g+j-3}{g-2-j} \biggr) S_j.
\een
\end{proof}

Define the numbers $\{E^h_d:\; 0 \leq h \leq g(d)\}$ as follows:
\be
\sum_{g=0}^{g(d)} (-1)^g n^g_d S_g
= \sum_{h=0}^{g(d)} E^h_d T_h.
\ee
As a corollary to the above Lemma,
we have

\begin{lem}
The numbers $\{n^g_d:\; 0 \leq g \leq g(d)\}$ and
the numbers $\{E^h_d:\; 0 \leq h \leq g(d)\}$ are related as follows:
\bea
&& n^g_d = (-1)^g \sum_{h = g}^{g(d)} E^h_d \cdot \biggl(
\binom{h+g-1}{h-g} - \binom{h+g-3}{h-g-2} \biggr), \\
&& E^h_d = (-1)^h \sum_{g=h}^{g(d)} \binom{2g-2}{g-h} \cdot n^g_d.
\eea
\end{lem}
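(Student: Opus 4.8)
The plan is to treat this as a pure change-of-basis computation in the space $SP(q)$, using the two expansions in Lemma \ref{lm:SgTg} together with the defining relation $\sum_{g=0}^{g(d)}(-1)^g n^g_d S_g = \sum_{h=0}^{g(d)} E^h_d T_h$. The key observation is that $\{S_g\}$ and $\{T_g\}$ are both bases of $SP(q)$, so an identity among finite linear combinations of either basis may be verified by comparing coefficients, and the two formulas of Lemma \ref{lm:SgTg} furnish precisely the two mutually inverse transition matrices we need.

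First I would derive the formula for $E^h_d$. Substituting the first identity of Lemma \ref{lm:SgTg}, $S_g = \sum_{j=1}^{g}(-1)^{g-j}\binom{2g-2}{g-j}T_j$, into the left-hand side of the defining relation and interchanging the order of the (finite) summations gives $\sum_{g}(-1)^g n^g_d S_g = \sum_{h}\bigl(\sum_{g\geq h}(-1)^g(-1)^{g-h}\binom{2g-2}{g-h}n^g_d\bigr)T_h$. Since $(-1)^g(-1)^{g-h}=(-1)^{2g-h}=(-1)^h$, the inner sign is independent of $g$ and factors out, and comparing the coefficient of each $T_h$ against the right-hand side yields $E^h_d=(-1)^h\sum_{g=h}^{g(d)}\binom{2g-2}{g-h}n^g_d$, as claimed.

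The formula for $n^g_d$ is obtained symmetrically, now substituting the second identity of Lemma \ref{lm:SgTg}, $T_g=\sum_{j=1}^{g}\bigl(\binom{g+j-1}{g-j}-\binom{g+j-3}{g-2-j}\bigr)S_j$, into the right-hand side of the defining relation. After interchanging summations and comparing the coefficient of each $S_g$ (using that $\{S_g\}$ is a basis) one reads off $(-1)^g n^g_d=\sum_{h\geq g}E^h_d\bigl(\binom{h+g-1}{h-g}-\binom{h+g-3}{h-g-2}\bigr)$, which is exactly the first displayed formula after multiplying through by $(-1)^g$. Here I note that $h-2-g$ and $h-g-2$ denote the same integer, so the binomial coefficients match those appearing in Lemma \ref{lm:SgTg}.

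The computation is entirely mechanical; the only point requiring a moment of care is the bookkeeping at the boundary indices. The expansions in Lemma \ref{lm:SgTg} run over $j\geq 1$, whereas the defining sums include the $g=0$ (resp. $h=0$) term with $S_0=T_0=1$; I would check separately that the $g=0$ and $h=0$ contributions are consistent with the stated formulas, equivalently that the two triangular transition matrices of Lemma \ref{lm:SgTg} are genuine inverses so that no spurious cross terms survive. This boundary verification is the main—and only mild—obstacle; once it is dispatched, the corollary follows immediately from coefficient comparison.
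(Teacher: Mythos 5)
Your proof is correct and follows exactly the route the paper takes: the paper presents this lemma as an immediate corollary of Lemma \ref{lm:SgTg}, obtained by substituting each change-of-basis identity into the defining relation $\sum_g (-1)^g n^g_d S_g = \sum_h E^h_d T_h$ and comparing coefficients in the bases $\{S_g\}$ and $\{T_g\}$, which is precisely your computation, including the sign simplification $(-1)^g(-1)^{g-h}=(-1)^h$. The boundary check at $g=0$, $h=0$ that you defer is the only point the paper likewise leaves implicit, so nothing further is needed.
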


In particular,
if we set $g=g(d)-\delta$ and $h=g(d) -j$,
we get
\be
\begin{split}
& n^{g(d)-\delta}_d = (-1)^{g(d)-\delta} \sum_{j=0}^{\delta}  E^{g(d)-j}_d \\
& \cdot \biggl(
\binom{2g(d)-\delta-j-1}{\delta-j} - \binom{2g(d)-\delta-j-3}{\delta-j-2} \biggr).
\end{split}
\ee
It is not hard to check that
\be
\begin{split}
& \binom{2g(d)-\delta-j-1}{\delta-j} - \binom{2g(d)-\delta-j-3}{\delta-j-2} \\
=& \binom{2g(d)-2-j-\delta}{\delta-j} + \binom{2g(d)-3-j-\delta}{\delta-j-1}.
\end{split}
\ee
So we have
\be
\begin{split}
& n^{g(d)-\delta}_d = (-1)^{g(d)-\delta} \sum_{j=0}^{\delta}  E^{g(d)-\delta}_d \\
=& \binom{2g(d)-2-j-\delta}{\delta-j} + \binom{2g(d)-3-j-\delta}{\delta-j-1}.
\end{split}
\ee
By comparing with \eqref{eqn:KKVPred},
we get the following geometric interpretation of $E^{g(d)-j}_d$:
\be
E^{g(d)-j}_d = (-1)^{d+1} e(\cC^d(\PP^2)^{(j)}).
\ee
To summarize,
we have discussed three bases of $SP(q)$:
\ben
&& R_g = q^g +q^{g-2} + \cdots + q^{-g}, \\
&& S_g = (q^{1/2}-q^{-1/2})^{2g}, \\
&& T_g = \begin{cases}
1, & g= 0, \\
\frac{1}{1+\delta_{g,1}} (q^{1/2}-q^{-1/2})^2 (q^{g-1}+q^{-(g-1)}), & g \geq 1,
\end{cases}
\een
and their associated invariants $N^g_d$, $n^g_d$ and $E^g_d$.
From the geometric point view, $E^g_d$ is most natural,
but from the representation theoretical point of view,
the introduction of $T_g$ is artificial and $R_g$ is most natural.

\section{Going Beyond the Predictions of Katz-Klemm-Vafa}

In this section we will refine our method in last section to compute
the Gopakumar-Vafa invariants $n^{(d-1)(d-2)/2-\delta}_d$
of $\kappa_{\P^2}$ for $0 \leq \delta \leq 2d-5$.
We will go beyond  the predictions made by Katz-Klemm-Vafa \cite{KKV}
for the case when $\varphi^{(j)}$'s are fibrations.
We have the following stronger version of Theorem \ref{thm:Mdelta}:
\begin{thm} \label{thm:Mdelta2}
For $\delta \geq 0$,
when $d \geq (\delta +5)/2$,
the transformed GV invariants $M^\delta_d(\kappa_{\PP^2}):=(-1)^{d-1}  N_d^{g(d)-\delta} (\kappa_{\PP^2})$
is almost a quadratic polynomial $M_\delta(d)$ in $d$,
and they have the following generating series
\be
\begin{split}
&\sum_{\delta\geq 0} M_\delta(d) q^\delta= \frac{[1]^2[2]}{[\infty]!^3} \biggl(\binom{d+2}{2}
- 3 \sum_{i\geq 1} \frac{q^{i}}{(1-   q^{i})^2}\biggr) \\
&- \frac{3\cdot q^{d-1}[2][3]}{[\infty]!^3}\biggl(\binom{d+1}{2}
- 3 \sum_{i\geq 1} \frac{q^{i}}{(1-   q^{i})^2}
-3\frac{q^{3}}{1-q^{3}} \biggr).
\end{split}
\ee
\end{thm}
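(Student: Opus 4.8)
The plan is to run the same machine as in Theorem \ref{thm:Mdelta}, but to carry the expansion of the relevant $q$-series roughly twice as far, down to $q^{-(2d-5)}$ instead of $q^{-(d-2)}$. As before, everything is controlled by $\I(d)$ and by the passage $F(d)=\log Z$; by \eqref{F(d)=N2} the multi-cover terms $F^k(d)$ with $k>1$ still lie strictly below the enlarged window for $d>3$, so the transformed invariants $M^\delta_d$ with $\delta\le 2d-5$ are again read off from $[1]^2[2]$ times an effective series, now to the deeper precision. What changes is that three families of subleading terms, all negligible when $\delta\le d-2$, now enter: (i) the subleading parts of the single-row sum $W_d$, namely $W^3_d$ together with the correction of $W^2_d$ beyond $W^2_\infty$, whose top degrees ($-(d+2)$, resp. $\le-(d+1)$) fall inside the window; (ii) the leading \emph{non}-single-row part of $\I(d)$, coming from configurations in which one $\mu^i$ is a two-row partition $(d_i-1,1)$, which by the refined estimate in Lemma \ref{lm:WEst} sits as close as $d+2$ below the single-row leading term and therefore also contributes; and (iii) in $F=\log Z$ the single product $\I(1)\I(d-1)$, whose top degree $g(d)-d$ reaches the window, all other products and all triple products dropping off by more than $2d-5$.

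First I would refine Theorem \ref{thm:Wd}: re-running Propositions \ref{E1}--\ref{E2} and Lemma \ref{lm:leading} while retaining the next strata (only the $d_2=0$ slices survive at the top), I would obtain $W^3_d$ and $W^{2\prime}_d:=W^2_d-W^2_\infty$ modulo $q^{-(2d-5)}$ in closed hypergeometric form. Next, using the second and third inequalities of Lemma \ref{lm:WEst} to isolate the two-row shapes $\mu^i=(d_i-1,1)$ (and, where needed, $(d_i-2,2)$) together with the explicit formula \eqref{Wmn} for $\W_{(m),(n)}$, I would evaluate the non-single-row piece (ii) by the same $q$-binomial summation; this is where I expect the factor $[3]$ and the extra $\tfrac{q^3}{1-q^3}$ to be produced, since the hook-length data of $(d_i-1,1)$ introduces a $(1-q^{-3})$-type denominator absent from the single-row case. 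Finally (iii) is elementary: $\I(1)=-3q^{-1}/(1-q^{-1})^2$, so $-\I(1)\I(d-1)$ equals $(-1)^d q^{(d^2-3d)/2}$ times $-\tfrac{3q^{-(d-1)}}{(1-q^{-1})^2}W_{d-1}$, and $W_{d-1}$ is already known to the needed precision by Theorem \ref{thm:Wd}, its own corrections $W^3_{d-1},W^{2\prime}_{d-1}$ lying below the relevant degrees.

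The last step is to assemble (i)+(ii)+(iii), multiply by $[1]^2[2]=(1-q^{-1})^2(1-q^{-2})$, truncate at $q^{\ge-(2d-5)}$, and check that the result collapses to the two stated terms: the first term is unchanged from Theorem \ref{thm:Mdelta}, while the new contributions must repackage into $-\,3q^{d-1}[2][3]/[\infty]!^3\bigl(\binom{d+1}{2}-3\sum_{i\ge1}\tfrac{q^i}{(1-q^i)^2}-3\tfrac{q^3}{1-q^3}\bigr)$ after $q\mapsto q^{-1}$. The hard part will be exactly this collapse. Each piece is only \emph{almost} of the clean shape — the product (iii) alone carries a $[2]$ rather than $[2][3]$ and no $\tfrac{q^3}{1-q^3}$ — so the quadratic-in-$d$ terms at orders $q^{-(d+2)}$ and below must cancel delicately between the two-row partition sum and the $\log$-product before the surviving remainder organizes into the asserted quasimodular expression. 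Controlling these cancellations, and confirming that no further partition shapes or products intrude before $\delta=2d-5$ (so that the identity is exact, not merely asymptotic, on this range, explaining the qualifier ``almost a quadratic polynomial''), is the crux; the degree bookkeeping throughout is governed by Lemmas \ref{lm:NKappaEst} and \ref{lm:WEst}.
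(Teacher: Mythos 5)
Your plan reproduces the paper's proof of Theorem \ref{thm:Mdelta2} essentially step for step: the paper likewise isolates exactly your three new sources of contributions in the window $\delta\le 2d-5$ — the subleading single-row pieces $W_d^{2'}$ and $W_d^3$ (Proposition \ref{wbwc}), the two-row contributions $\I^{(2)}(d)$ from shapes $(d_i-1,1)$ controlled by the refined inequalities of Lemma \ref{lm:WEst} (Proposition \ref{degreeofId2}, Propositions \ref{I2b} and \ref{I2aa}), and the single surviving product $\I(1)\I(d-1)$ in $F=\log Z$ (Proposition \ref{I1Id}) — and assembles them in Theorems \ref{Id2} and \ref{F(d)2} exactly as you describe. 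One small correction to your heuristic: the factor $[3]$ and the term $3q^{3}/(1-q^{3})$ do not arise from the hook lengths of $(d_i-1,1)$, but from the cancellation $q^{-d-2}-q^{-d+1}=-q^{-d+1}(1-q^{-3})$ between the piece $\I^{(2a)}=\frac{3q^{-d-2}}{(1-q^{-1})^2}W_{d-1}$ of the two-row sum and the subtracted product $\I(1)\I(d-1)=\frac{3q^{-d+1}}{(1-q^{-1})^2}W_{d-1}$, both proportional to $W_{d-1}$ — which is precisely the ``delicate collapse'' you correctly flag as the crux.
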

This theorem is a corollary of Theorem \ref{F(d)2}.

Under the same conditions,
under the same condition,
the numbers $(-1)^{g(d)+d-1-\delta}n^{g(d)-\delta}_d(\kappa_{\PP^2})$ is almost a polynomial $n_\delta(d)$
given by:
\be
n_\delta(d): = \sum_{j=0}^\delta
M_j(d) \cdot \binom{d^2-3d+3-\delta-j}{\delta-j}.
\ee
As a consequence we then have
\begin{thm} \label{thm:Ndelta2}
Let $t$ and $q$ be related by
\be
t = \frac{q}{(1-q)^2}, \qquad q = \frac{1+2t-\sqrt{1+4t}}{2t}.
\ee
Then we have
\be \label{eqn:NdeltaGen2}
\begin{split}
& \sum_{\delta \geq 0} n_\delta(x) t^\delta
= \frac{1}{[1]^{x^2-3x} \cdot [\infty]!^3} \biggl(\binom{x+2}{2}
- 3 \sum_{i\geq 1} \frac{q^i}{(1-   q^i)^2} \biggr) \\
& - \frac{3\cdot q^{x-1}\cdot [3]}{[1]^{x^2-3x+2}[\infty]!^3}\biggl(\binom{x+1}{2}
- 3 \sum_{i\geq 1} \frac{q^{i}}{(1-   q^{i})^2}
-3\frac{q^{3}}{1-q^{3}} \biggr).
\end{split}
\ee
\end{thm}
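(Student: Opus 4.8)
The plan is to recycle the argument that established Theorem \ref{thm:Ndelta} essentially verbatim, changing only the input series for the $M_j$. The decisive point is that the identity \eqref{eqn:M&n}, namely
\be
\sum_{\delta \geq 0} n_\delta(x) t^\delta = \frac{1}{(1-q)^{x^2-3x+2}(1-q^2)} \cdot \sum_{j \geq 0} M_j(x) q^j ,
\ee
was derived purely from the combinatorial definition $n_\delta(x) = \sum_{j=0}^\delta M_j(x)\binom{x^2-3x+3-\delta-j}{\delta-j}$ together with the summation identity \eqref{eqn:SumIdentity}; that derivation never used the particular closed form of $\sum_j M_j(x) q^j$ supplied by the weaker Theorem \ref{thm:Mdelta}. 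Hence \eqref{eqn:M&n} continues to hold when $\sum_j M_j(x) q^j$ is taken to be the stronger two-term series furnished by Theorem \ref{thm:Mdelta2} (with $d$ renamed $x$). The whole proof is then a substitution followed by bookkeeping cancellation. Write $[1]=1-q$, $[2]=1-q^2$, $[3]=1-q^3$ throughout.

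First I would feed the two summands of Theorem \ref{thm:Mdelta2} into the right-hand side of \eqref{eqn:M&n} separately. The first summand carries the prefactor $[1]^2[2]=(1-q)^2(1-q^2)$: the factor $(1-q^2)$ cancels the $(1-q^2)$ in the denominator of \eqref{eqn:M&n}, and two powers of $(1-q)$ lower the exponent $x^2-3x+2$ by two, producing
\be
\frac{1}{(1-q)^{x^2-3x}[\infty]!^3}\biggl(\binom{x+2}{2} - 3 \sum_{i\geq 1} \frac{q^{i}}{(1-q^{i})^2}\biggr),
\ee
which is exactly the first term of \eqref{eqn:NdeltaGen2}; this step is identical to the corresponding computation in the proof of Theorem \ref{thm:Ndelta}.

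Next I would handle the second summand, whose prefactor is $-3\,q^{x-1}[2][3]=-3\,q^{x-1}(1-q^2)(1-q^3)$. Again $(1-q^2)$ cancels the denominator of \eqref{eqn:M&n} and two powers of $(1-q)$ reduce the exponent to $x^2-3x+2$, leaving $[3]=1-q^3$ in the numerator and giving
\be
- \frac{3\, q^{x-1}\,[3]}{(1-q)^{x^2-3x+2}[\infty]!^3}\biggl(\binom{x+1}{2} - 3 \sum_{i\geq 1} \frac{q^{i}}{(1-q^{i})^2} -3\frac{q^{3}}{1-q^{3}} \biggr),
\ee
precisely the second term of \eqref{eqn:NdeltaGen2}. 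Adding the two contributions yields the claimed formula.

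Since the computation is a direct corollary of already-established identities, there is no genuine analytic obstacle; the only point requiring care is the status of the word \emph{almost} in Theorem \ref{thm:Mdelta2}. The equality $M_\delta(d)=M_\delta(x)|_{x=d}$ holds only for $d\geq(\delta+5)/2$, and correspondingly $n_\delta(d)$ coincides with the polynomial $n_\delta(x)|_{x=d}$ only in that range. The clean resolution is to read both \eqref{eqn:M&n} and \eqref{eqn:NdeltaGen2} as formal identities between the generating series of the \emph{polynomials} $M_\delta(x)$ and $n_\delta(x)$ (formal in $x$, coefficientwise in $q$), rather than between their specializations. As formal identities the substitution is unconditionally valid, and the admissible range of the specialization $x=d$ is then inherited unchanged from Theorem \ref{thm:Mdelta2}.
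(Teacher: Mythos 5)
Your proposal is correct and is essentially the paper's own argument: the paper's proof consists of exactly the observation that \eqref{eqn:M&n} holds independently of the closed form of $\sum_j M_j(x)q^j$, so one substitutes Theorem \ref{thm:Mdelta2} and cancels the prefactors. (One small wording slip: in the second summand the prefactor $-3q^{x-1}[2][3]$ contains no $(1-q)^2$, so nothing lowers the exponent $x^2-3x+2$ — it simply remains — but the expression you write down is the correct one.)
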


\begin{proof}
Just recall \eqref{eqn:M&n}:
\ben
\sum_{\delta \geq 0} n_\delta(x) t^\delta
& = & \frac{1}{[1]^{x^2-3x+2}[2]} \cdot
\sum_{j \geq 0} M_j(x) q^j
\een
\end{proof}

Recall the prediction of \cite[(4.15)]{KKV} can be rephrased as follows
(cf. \eqref{eqn:NEC}):
\be
\sum_{\delta \geq 0} n_{\delta}(d) t^\delta \\
= \frac{1}{(1-q)^{d^2-3d}} \sum_{j \geq 0} q^j \cdot e(\cC^d(\PP^2)^{(j)}).
\ee
Hence by \eqref{eqn:NdeltaGen2} we get the following prediction for the Euler numbers of
relative Hilbert schemes:
\be
\begin{split}
&  \sum_{j \geq 0} q^j \cdot e(\cC^d(\PP^2)^{(j)})
= \frac{1}{[\infty]!^3} \biggl(\binom{d+2}{2}
- 3 \sum_{i\geq 1} \frac{q^i}{(1-   q^i)^2} \biggr) \\
& - \frac{3\cdot q^{d-1}\cdot [3]}{[1]^{2}[\infty]!^3}\biggl(\binom{d+1}{2}
- 3 \sum_{i\geq 1} \frac{q^{i}}{(1-   q^{i})^2}
-3\frac{q^{3}}{1-q^{3}} \biggr)
\end{split}
\ee
modulo $q^{2d-4}$.
It is interesting to establish this geometrically.

\subsection{A formula of $\I(d)$}
Recall that
by formula \eqref{Id}, we can write down the leading terms of $\I(d)$ in the following form:
\begin{align*}
\I(d)
 = & (-1)^d \cdot \sum_{\sum_i d_i=d}
q^{\frac{1}{2}\sum_i \kappa_{(d_i)}}
\W_{(d_1),(d_2)}(q)\W_{(d_2),(d_3)}(q)\W_{(d_3),(d_1)}(q) \nonumber\\
 & + (-1)^d \cdot 3 \sum_{\sum_i d_i=d}
q^{\frac{1}{2}( \kappa_{(d_1)}+\kappa_{(d_2-1,1)}+\kappa_{(d_3)})}\\
&\qquad\qquad \qquad \cdot\W_{(d_1),(d_2-1,1)}(q)\W_{(d_2-1,1),(d_3)}(q) \W_{(d_3),(d_1)}(q) \\
& +  \cdots\nonumber\\
= & (-1)^d q^{g(d)-1} \Big[ W_d(q) +  \I^{(2)}(q) + \cdots\Big],
\end{align*}
where
\begin{align*}
&\I^{(2)}(d)
=q^{-(g(d)-1)}\cdot 3 \sum_{\sum_i d_i=d}
q^{\frac{1}{2}( \kappa_{(d_1)}+\kappa_{(d_2-1,1)}+\kappa_{(d_3)})}\\
&\qquad\qquad \qquad \cdot\W_{(d_1),(d_2-1,1)}(q)\W_{(d_2-1,1),(d_3)}(q) \W_{(d_3),(d_1)}(q).
\end{align*}

\begin{prop}\label{degreeofId2} We have the following degree estimates:
\ben
\deg_q (\I(d)-(-1)^d q^{g(d)-1} \cdot W_d(q)  &=& (g(d)-1)- (d+2),
\\
\deg_q (\I(d)-(-1)^d q^{g(d)-1} [ W_d(q) +  \I^{(2)}(q) ]&=&
(g(d)-1)- (2d+1).
\een
\end{prop}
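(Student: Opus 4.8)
The plan is to return to the expansion \eqref{Id} of $\I(d)$ and organize the triple sum by the \emph{shapes} of $\mu^1,\mu^2,\mu^3$. The starting observation is that every all-single-row term has the \emph{same} degree: using $\kappa_{(m)}=m(m-1)$ and $\deg_q\W_{(a),(b)}=ab-(a+b)/2$ one finds
\[
\deg_q\Big(q^{\frac12\sum_i\kappa_{(d_i)}}\W_{(d_1),(d_2)}\W_{(d_2),(d_3)}\W_{(d_3),(d_1)}\Big)=\tfrac12 d^2-\tfrac32 d=g(d)-1
\]
independently of $(d_1,d_2,d_3)$. Hence for an arbitrary configuration I can record its degree through the total loss $\Delta=(g(d)-1)-\deg_q(\cdots)=\sum_i\sigma_i+\sum_{(i,j)}w(\mu^i,\mu^j)$, where $\sigma_i=\tfrac12(\kappa_{(d_i)}-\kappa_{\mu^i})\ge0$ and $w(\mu,\nu)=\deg_q\W_{(|\mu|),(|\nu|)}-\deg_q\W_{\mu,\nu}\ge0$. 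From the proof of Lemma~\ref{lm:WEst} I would extract the exact identity $w(\mu,\nu)=n_\mu+(|\mu|-\mu_1)|\nu|+D_\nu(\mu)$, where $D_\nu(\mu)=\mu_1|\nu|-|\nu|/2-\deg_q s_\nu(q^{\mu+\rho})\ge0$ and $D_\nu(\mu)=0$ iff $\nu=(|\nu|)$; together with Lemma~\ref{lm:NKappaEst} this reduces both estimates to a finite optimization of $\Delta$ over partition shapes.

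For the first estimate, Lemma~\ref{degreeofId} already gives $\Delta\ge d+2$ for every non-single-row configuration, hence the bound $\deg_q(\I(d)-(-1)^d q^{g(d)-1}W_d)\le (g(d)-1)-(d+2)$. The equality clauses of Lemmas~\ref{lm:NKappaEst} and~\ref{lm:WEst} identify the configurations with $\Delta=d+2$ as exactly those with one $\mu^i=(d_i-1,1)$ and the other two single rows; relabeling the summation index via the cyclic symmetry of $\W_{\mu^1,\mu^2}\W_{\mu^2,\mu^3}\W_{\mu^3,\mu^1}$ assembles these into $(-1)^d q^{g(d)-1}\I^{(2)}(d)$, the factor $3$ accounting for the three positions. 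So the first identity follows once $\deg_q\I^{(2)}(d)=-(d+2)$, i.e. once the top coefficient of $\I^{(2)}(d)$ is seen not to vanish; I would obtain this by the same $q$-hypergeometric method used to evaluate $W_d$ in Theorem~\ref{thm:Wd}, now applied to $\I^{(2)}(d)$.

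For the second estimate I would classify one shape deeper. For a single non-single-row shape $\lambda$ of $d_i$ the loss is $\Delta=\tfrac12(d_i^2-\sum_j\lambda_j^2)+3n_\lambda+(d_i-\lambda_1)(d-d_i)$, which jumps from $d+2$ (at $\lambda=(d_i-1,1)$) directly to $2d+2$ (at $\lambda=(d_i-2,2)$), all other $\lambda$ being larger. The genuinely new point is the two- and three-complicated configurations, whose shared edge $\W_{\lambda,\tau}$ has \emph{both} arguments non-single-row, a case Lemma~\ref{lm:WEst} does not control. Here I would prove the sharpened bound $D_\tau(\lambda)\ge \lambda_1-\lambda_2+1$ for $\tau\ne(|\tau|)$: when $\tau\ne(|\tau|)$ the leading coefficient $\sum_\eta\chi_\tau(\eta)/z_\eta=\delta_{\tau,(|\tau|)}$ of $s_\tau(q^{\lambda+\rho})$ vanishes by \eqref{eqn:SchurNewton1}, so its degree falls below $\lambda_1|\tau|-|\tau|/2$ by at least the minimal gap $\lambda_1-\lambda_2+1$ occurring in $p_m(q^{\lambda+\rho})$. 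Feeding this back, the cheapest two-complicated configuration ($\lambda=(d_j-1,1)$, $\tau=(d_k-1,1)$) again gives $\Delta=2d+2$, and three-complicated ones are larger, so a short optimization shows every configuration outside $W_d$ and $\I^{(2)}$ has $\Delta\ge 2d+2$. This yields $\deg_q(\I(d)-(-1)^d q^{g(d)-1}[W_d+\I^{(2)}])\le (g(d)-1)-(2d+1)$, which is the bound used for Theorem~\ref{thm:Mdelta2}; in fact the first loss occurs one degree lower, at $(g(d)-1)-(2d+2)$, the top layer being the sum of the $(d_i-2,2)$- and double-$(d_i-1,1)$-contributions.

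The main obstacle is the two-complicated edge estimate $D_\tau(\lambda)\ge\lambda_1-\lambda_2+1$: the refined parts of Lemma~\ref{lm:WEst} quantify only the degree loss coming from the \emph{first} argument, so allowing a second non-single-row argument forces a fresh subleading-term analysis of the specialization $s_\tau(q^{\lambda+\rho})$. The secondary, more clerical difficulty is the non-vanishing of the top coefficients (of $\I^{(2)}(d)$ for the first equality, and of the $2d+2$-layer for the sharp form of the second): each is a sum of leading $\W$-coefficients over all $d_1+d_2+d_3=d$, and I expect to confirm non-cancellation from the explicit evaluations of Section~\ref{sec:P2} rather than by tracking signs summand by summand.
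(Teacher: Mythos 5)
Your proposal is correct and follows the same overall strategy as the paper: write every single-row triple at the common top degree $g(d)-1$, and bound the remaining configurations by summing the losses $\tfrac12(\kappa_{(d_i)}-\kappa_{\mu^i})$ from Lemma \ref{lm:NKappaEst} and the edge losses from Lemma \ref{lm:WEst}, with the same two-case classification (two hooks $(d_i-1,1)$ versus some $\mu^i_1\le d_i-2$) for the second estimate. The one genuine divergence is the edge whose two arguments are both non-single-row: the paper disposes of $\W_{(d_1-1,1),(d_2-1,1)}$ by an explicit computation from $s_{(d-1,1)}=h_{d-1}h_1-h_d$ and \eqref{eqn:sn(qm+rho)}, obtaining its exact degree $(d_1-1)(d_2-1)-1-\tfrac12(d_1+d_2)$, whereas you propose the general subleading estimate $D_\tau(\lambda)\ge\lambda_1-\lambda_2+1$ for $\tau\ne(|\tau|)$, derived from the cancellation $\sum_\eta\chi_\tau(\eta)/z_\eta=0$ together with the minimal gap $\lambda_1-\lambda_2+1$ in $p_m(q^{\lambda+\rho})$; that argument is sound and gives the same loss $2d+2$ for the two-hook layer. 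Your version is more robust (it would control deeper layers and other surfaces uniformly), while the paper's explicit computation also delivers the top coefficient of that layer, which is needed later for $\I^{(2)}$. Two smaller points: (i) your observation that the first loss beyond $W_d+\I^{(2)}$ actually occurs at $2d+2$ rather than $2d+1$ is correct --- the paper's Case 2 arithmetic is slightly off (the $\kappa$-loss for $(d_1-2,2)$ is $2d_1-2$, giving total $2d+2$), though this only makes the stated inequality safer; (ii) like the paper, you do not verify the non-cancellation of the top coefficients needed to upgrade the inequalities to the stated equalities, deferring instead to the explicit evaluations of $W_d$ and $\I^{(2)}$, which is the same level of rigor as the original.
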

\begin{proof}
The first estimate is in fact a reformulation of Lemma \ref{degreeofId}.
For the second  one, note that for three partitions $\mu^1,\mu^2,\mu^3$ of $d_1, d_2, d_3$ respectively,
if they are not of the following forms:
\ben
&&(d_1),(d_2),(d_3)\\ &&(d_1-1,1),(d_2),(d_3) \quad \\ &&(d_1),(d_2-1,1),(d_3) \quad \\ && (d_1),(d_2),(d_3-1,1)
\een
then there are two cases: Case 1. They are of the forms
 \ben
&&(d_1-1,1),(d_2-1,1),(d_3) \quad \\ &&(d_1),(d_2-1,1),(d_3-1,1) \quad \\ && (d_1-1,1),(d_2),(d_3-1,1)
\een
Case 2. One of them say $\mu^i$ must have $\mu^i_1 \leq d_i-2$.

For the first case, recall
\ben
\W_{(d_1-1,1),(d_2-1,1)}(q)=q^{-\frac{1}{2}(d_1+d_2)}s_{(d_1-1,1)}(q^\varrho)
s_{(d_2-1,1)}(q^{(d_1-1,1)+\varrho}),
\een
where
$q^{\mu+\varrho}=(q^{\mu_1},q^{\mu_2-1},q^{\mu_3-2},\cdots)$.
By $s_{(d-1,1)} = h_{d-1}h_1-h_{d}$,
\begin{align*}
& s_{(d_2-1,1)}(q^{(d_1-1,1)+\varrho})\\
=&
 (q^{d_1-1}-q^{-1}+\frac{1}{1-q^{-1}}) ( q^{(d_1-1)(d_2-1)}+q^{(d_1-1)(d_2-2)}(1+q^{-2}+\cdots))\\&-
( q^{(d_1-1)(d_2)}+q^{(d_1-1)(d_2-1)}(1+q^{-2}+\cdots)).
\end{align*}
Hence
\begin{align*}
&\deg_q  \W_{(d_1-1,1),(d_2-1,1)}
= - \half(d_1+d_2)+(-1)+(d_2-1)(d_1-1),
\end{align*}
 we have
\begin{align*}
&\deg_q  \bigg[\sum_{\sum_i d_i=d}
q^{\frac{1}{2}( \kappa_{(d_1-1,1)}+\kappa_{(d_2-1,1)}+\kappa_{(d_3)})}\\
&\qquad \cdot\W_{(d_1-1,1),(d_2-1,1)}(q)\W_{(d_2-1,1),(d_3)}(q) \W_{(d_3),(d_1-1,1)}(q)\bigg]\\
=& (g(d)-1)- (d_1+d_2)- (d_1+d_2 +d_3+1+d_3+1) \\
=& (g(d)-1) -(2d+2).
\end{align*}
For the second case, without loss of generality suppose that $\mu^1_1\leq d_1-2$.  By Lemma \ref{lm:WEst},
$$
\deg_q \W_{\mu^1,\mu^2}(q) \leq d_1 \cdot d_2 - (d_1+d_2)/2-2(d_2+1),
$$
hence
\begin{align*}
&\deg_q  \bigg[ \sum_{\sum_i d_i=d}
q^{\frac{1}{2}( \kappa_{\mu^1}+\kappa_{\mu^2}+\kappa_{\mu^3})}
\W_{\mu^1,\mu^2}(q)\W_{\mu^2,\mu^3}(q) \W_{\mu^3,\mu^1}(q)\bigg]\\
\leq & (g(d)-1)- (2 d_1-1)-2(d_2+1+d_3+1) =(g(d)-1)- (2 d+1)
\end{align*}
The equality holds if $\mu_1 = (d_1-2,2)$ and $\mu_2=(d_2), \mu_3 = (d_3)$. This finishes the proof.
\end{proof}

Now we can write down the following formula for more leading terms
of $\I(d)$, which is in fact a generalization of Lemma \ref{Idwd} and Theorem \ref{thm:Wd}.
\begin{thm}\label{Id2} We have
\begin{align}
&\I(d)= \frac{1}{[\infty]_{q^{-1}}!^3}\biggl[\biggl(\binom{d+2}{2}
- 3 \sum_{i\geq 1} \frac{q^{-i}}{(1-   q^{-i})^2}\biggr)+\\
& \frac{3\cdot q^{-d-2}}{ (1-q^{-1})^2}\biggl(\binom{d+1}{2}
- 3 \sum_{i\geq 1} \frac{q^{-i}}{(1-   q^{-i})^2}
+3 \biggr)\biggr]
\biggl|_{q^{> -2d}}.
\nonumber
\end{align}
\end{thm}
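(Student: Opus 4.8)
The plan is to push the shape-by-shape regrouping of $\I(d)$ from \eqref{Id} one order beyond Lemma~\ref{Idwd}, and then to evaluate the single new contribution $\I^{(2)}(d)$ by the same hypergeometric summation that produced $W_d(q)$ in Theorem~\ref{thm:Wd}. The decomposition is already set up above: writing $\I(d) = (-1)^d q^{g(d)-1}\bigl[W_d(q) + \I^{(2)}(d) + \cdots\bigr]$, where $W_d(q)$ collects the triples with all $\mu^i = (d_i)$ and $\I^{(2)}(d)$ collects (with the cyclic factor $3$) the triples in which exactly one $\mu^i$ is the hook $(d_i-1,1)$ and the other two are rows. Proposition~\ref{degreeofId2} guarantees that every remaining triple --- two or more hooks, or a shape with $\mu^i_1 \le d_i - 2$ --- contributes only in degree $\le (g(d)-1)-(2d+1)$, hence below the truncation $q^{>-2d}$; this is exactly where Lemma~\ref{lm:WEst} does the work. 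Thus the first step is to record that, modulo $q^{>-2d}$, only $W_d(q)$ and $\I^{(2)}(d)$ survive, and to insert the closed form of $W_d(q)$ from Theorem~\ref{thm:Wd} for the first bracket. Everything then reduces to an explicit evaluation of $\I^{(2)}(d)$, which must equal $\frac{3\,q^{-d-2}}{[\infty]_{q^{-1}}!^{3}(1-q^{-1})^2}\bigl(\binom{d+1}{2} - 3\sum_{i\ge1}\frac{q^{-i}}{(1-q^{-i})^2} + 3\bigr)$ modulo the truncation.

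The second step is to make the two hook-type $\W$-factors in $\I^{(2)}(d)$ explicit. Using the Jacobi--Trudi identity $s_{(a,1)} = h_a h_1 - h_{a+1}$ with $a = d_2-1$, together with \eqref{w}, \eqref{sh1} and the principal specialization \eqref{eqn:sn(qrho)}, I would write $\W_{(d_1),(d_2-1,1)}$ and $\W_{(d_2-1,1),(d_3)}$ as explicit combinations of the $q$-factorial reciprocals $1/[k]_{q^{-1}}!$ (the remaining factor $\W_{(d_3),(d_1)}$ is already given by \eqref{Wmn}). Here $h_1$ specializes to $q^{-1/2}/(1-q^{-1})$ at $q^{\varrho}$ and to $q^{d_1-1/2} + q^{-3/2}/(1-q^{-1})$ at $q^{(d_1)+\varrho}$, so the hook merely adds one extra additive piece to each principal-specialization formula used for $W_d$. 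After collecting the overall powers of $q$ coming from $\kappa_{(d_1)}+\kappa_{(d_2-1,1)}+\kappa_{(d_3)}$ and from the half-integer shifts, $\I^{(2)}(d)$ takes the form of a triple sum over $d_1+d_2+d_3=d$ of products of such reciprocals, structurally identical to $W_d(q)$ in \eqref{wd} but with the middle row promoted to the hook.

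The third step is to carry out the summation over $d_1+d_2+d_3 = d$ exactly as in Propositions~\ref{E1} and~\ref{E2}: introduce the operators $T^x_m$, use the $q$-exponential \eqref{exp} and the $q$-binomial theorem \eqref{binomial} to resum the inner indices, and then perform the outer sum by Abel summation, taking the $t\to1$ limit of the generating series in the composition variable. Matching the result against the second bracket --- in particular producing the factor $\binom{d+1}{2}$ (the number of compositions of $d$ with the middle part $\ge 1$), the correction $-3\sum_{i\ge1}\frac{q^{-i}}{(1-q^{-i})^2}$, and the constant $+3$ --- is where I expect the main difficulty: the extra $h_1$ factor from the hook generates a boundary term under Abel summation that is absent in the pure-row computation, and the constant $+3$ must be traced precisely through this boundary term and through the truncation at $q^{>-2d}$. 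Once $\I^{(2)}(d)$ is identified in closed form, combining it with Theorem~\ref{thm:Wd} and the degree bounds of Proposition~\ref{degreeofId2} yields the stated formula for $\I(d)$.
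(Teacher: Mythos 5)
Your overall strategy is the paper's: regroup $\I(d)$ by shape, use Proposition~\ref{degreeofId2} to discard everything except the all-rows sum $W_d(q)$ and the one-hook sum $\I^{(2)}(d)$, express the hook factors via $s_{(n-1,1)}=h_{n-1}h_1-h_n$, and resum with the $T^x_m$/$q$-binomial/Abel-summation machinery. But there is a genuine gap in your accounting of where the second bracket comes from. Theorem~\ref{thm:Wd} only identifies $W_d(q)$ with its closed form $\frac{1}{[\infty]_{q^{-1}}!^3}\bigl(\binom{d+2}{2}-3\sum_{i}\frac{q^{-i}}{(1-q^{-i})^2}\bigr)$ \emph{modulo} $q^{\geq -d}$; the present theorem needs the bracket down to $q^{>-2d}$. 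In that extra range $W_d(q)$ is \emph{not} given by the first bracket: the pieces $W_d^{2'}=W_d^2-W_\infty^2$ and $W_d^3$ contribute at order $q^{-(d+2)}$, namely $\frac{6\,q^{-(d+2)}}{(1-q^{-1})^2[\infty]_{q^{-1}}!^3}$ and $\frac{3\,q^{-(d+2)}}{(1-q^{-1})^2[\infty]_{q^{-1}}!^3}\bigl(d+1-2\sum_{i\geq 2}\frac{q^{-i}}{1-q^{-i}}\bigr)$ respectively. Consequently your claim that $\I^{(2)}(d)$ alone "must equal" $\frac{3q^{-d-2}}{[\infty]_{q^{-1}}!^3(1-q^{-1})^2}\bigl(\binom{d+1}{2}-3\sum+3\bigr)$ is false: if you carry out your Steps 2--3 correctly you will find $\I^{(2)}(d)=\I^{(2a)}+\I^{(2a')}+\I^{(2b)}$ with $\I^{(2a)}=\frac{3q^{-d-2}}{(1-q^{-1})^2}W_{d-1}(q)$ producing the $\binom{d+1}{2}-3\sum$ part, but with residual terms $-\frac{3q^{-(d+2)}}{(1-q^{-1})^2[\infty]_{q^{-1}}!^3}\bigl(d-2\sum_{i\geq 1}\frac{q^{-i}}{1-q^{-i}}\bigr)-\frac{6q^{-(d+3)}}{(1-q^{-1})^3[\infty]_{q^{-1}}!^3}$ that are $d$-dependent and do not reduce to a constant. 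The books only balance when these are added to the $W_d^{2'}+W_d^3$ corrections from the row side: the $d$'s and the $\sum\frac{q^{-i}}{1-q^{-i}}$ terms cancel across the two groups and the constant $+3$ emerges from the combination $\frac{9q^{-(d+2)}}{(1-q^{-1})^2[\infty]_{q^{-1}}!^3}$.

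So the missing step is an extension of the analysis of $W_d(q)$ itself one order deeper (the analogue of Proposition~\ref{wbwc}), computed with the same $T^x_m$ and Abel-summation tools, before the matching in your Step 3 can succeed. You correctly flagged the $+3$ as the main difficulty, but located it entirely inside the hook contribution; in fact it is a cross-term cancellation between the subleading row corrections and the subleading hook corrections, and a proof that tries to match each bracket to a single shape class will not close.
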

We will finish the proof in the following two subsections.

\subsection{Contribution of $W_d$}

First we consider the $q^{> -2d}$ contributions from $W_d^2$ ,$W_d^3$ and $W_d^4$ . Since
$$
\deg_q W_d^4 \leq -(2d+3),
$$
it has no contribution.
Recall
\ben
 W_d^{2'}(q) &=&  W_d^2(q)-W_\infty^2(q) \\
&=& -3 \sum_{d_1+d_2>d} \frac{1}{([\infty]_{q^{-1}}!)^3}
\cdot T^x_{d_2}\left(\frac{(q^{-d_1-1};q^{-1})_{\infty}}{(xq^{-d_1-1};q^{-1})_{\infty}}-1\right), \\
 W_d^3(q)
&=& 3 \sum_{\sum d_i=d} \frac{1}{([\infty]_{q^{-1}}!)^3} \cdot T^x_{d_2}
\biggl(\frac{(q^{-d_1-1};q^{-1})_{\infty}}{(xq^{-d_1-1};q^{-1})_{\infty}}-1\biggr)\\
&& \qquad \qquad
\cdot T^x_{d_3}
\biggl(\frac{(q^{-d_2-1};q^{-1})_{\infty}}{(xq^{-d_2-1};q^{-1})_{\infty}}-1\biggr), \nonumber
\een
by Lemma \ref{lm:leading},
\begin{align*}
&\deg_q W_d^{2'}(q)=\max\{ -(d_1+1)(d_2+1) : d_1+d_2\geq d+1\}\\
&\deg_q W_d^{3}(q)=\max\{-(d_1+1)(d_2+1)-(d_2+1)(d_3+1): \sum_{i=1}^3 d_i=d\}
\end{align*}
If $d_1,d_2 \geq 1$, we have
\begin{align*}
&-(d_1+1)(d_2+1)=-(d_1-1)(d_2-1)-2(d_1+d_2)\leq - 2d.
\end{align*}
If $d_2 \geq 1$, we have
\begin{align*}
&-(d_1+1)(d_2+1)-(d_2+1)(d_3+1)\\
=& -(d+d_1d_2+d_2d_3+d_2+2)\leq - (2d+2) .
\end{align*}
Hence we only need to consider the terms of $d_1=0$ or $d_2=0$ (respectively $d_2=0$)
in the summation of $W_d^{2'}$ (respectively $W_d^3$).

Since
\begin{align*}
&T^x_{d_2}\left(\frac{(q^{-d_1-1};q^{-1})_{\infty}}{(xq^{-d_1-1};q^{-1})_{\infty}}-1\right)|_{d_2=0}\\
=&{(q^{-d_1-1};q^{-1})_{\infty}}-1
=-[\infty]_{q^{-1}}!(\frac{1}{[\infty]_{q^{-1}}!}-\frac{1}{[d_1]_{q^{-1}}!}), \\
&T^x_{d_3}
\biggl(\frac{(q^{-d_2-1};q^{-1})_{\infty}}{(xq^{-d_2-1};q^{-1})_{\infty}}-1\biggr)\biggr|_{d_2=0}
=T^x_{d_3}
\biggl(\frac{(q^{-1};q^{-1})_{\infty}}{(xq^{-1};q^{-1})_{\infty}}-1\biggr)\\
=&\sum_{k=0}^{d_3} \frac{q^{-k}\cdot[\infty]_{q^{-1}}!}{[k]_{q^{-1}}!}-1=-
[\infty]_{q^{-1}}!(\frac{1}{[\infty]_{q^{-1}}!}-\frac{1}{[d_3]_{q^{-1}}!}).
\end{align*}
We have the following formula of these terms:
\begin{align}
  W_d^{2'}(q)
=& \frac{2\cdot3}{([\infty]_{q^{-1}}!)^2}\sum_{d_1>d}
(\frac{1}{[\infty]_{q^{-1}}!}-\frac{1}{[d_1]_{q^{-1}}!})+q^{-2d}(\cdots),\label{wdb}\\
 W_d^3(q) =& \sum_{d_1+d_3=d} \frac{3}{[\infty]_{q^{-1}}!}
(\frac{1}{[\infty]_{q^{-1}}!}-\frac{1}{[d_3]_{q^{-1}}!})\label{wdc}
\\
& \qquad\quad\cdot(\frac{1}{[\infty]_{q^{-1}}!}-\frac{1}{[d_1]_{q^{-1}}!}) +q^{-2d-2}(\cdots).\nonumber
\end{align}
Moreover, we can take the summations to get the following:

\begin{prop}\label{wbwc}
We have
\begin{align}
  W_d^{2'}(q)
=& \frac{2\cdot 3 q^{-(d+2)}}{(1-q^{-1})^2([\infty]_{q^{-1}}!)^3}+  q^{-2d} \cdot(\cdots), \\
 W_d^3(q)
=& \frac{3 q^{-(d+2)}}{(1-q^{-1})^2([\infty]_{q^{-1}}!)^3}(d+1 -2\sum_{i\geq 2} \frac{q^{-i}}{1-q^{-i}}) \\
& +q^{-2d-2}(\cdots). \nonumber
\end{align}

\end{prop}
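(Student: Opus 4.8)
The plan is to start from the two reduction formulas \eqref{wdb} and \eqref{wdc} just established, which leave only the closed-form evaluation (modulo the stated errors) of the single sum $\sum_{d_1>d}\phi(d_1)$ and of the convolution $\sum_{d_1+d_3=d}\phi(d_1)\phi(d_3)$, where I abbreviate $\phi(d)=\tfrac{1}{[\infty]_{q^{-1}}!}-\tfrac{1}{[d]_{q^{-1}}!}$. The common input is the factorization $\tfrac{1}{[d]_{q^{-1}}!}=\tfrac{(q^{-d-1};q^{-1})_\infty}{[\infty]_{q^{-1}}!}$ already used in the proof of Lemma \ref{lm:leading}, which gives $\phi(d)=\tfrac{1}{[\infty]_{q^{-1}}!}\psi(d)$ with $\psi(d):=1-(q^{-d-1};q^{-1})_\infty$. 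Expanding the $q$-Pochhammer symbol by Euler's identity \eqref{eqn:EulerEProduct} (with $q\mapsto q^{-1}$ and $z=q^{-d-1}$) peels off the leading term, giving $\psi(d)=\tfrac{q^{-d-1}}{1-q^{-1}}+r(d)$ with $\deg_q r(d)\le -2d-3$.

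For $W_d^{2'}$ the evaluation is immediate: every remainder $r(d_1)$ with $d_1>d$ has degree at most $-2d-5$ and so lies inside $q^{-2d}(\cdots)$, whence $\sum_{d_1>d}\phi(d_1)$ equals $\tfrac{1}{[\infty]_{q^{-1}}!}\sum_{d_1>d}\tfrac{q^{-d_1-1}}{1-q^{-1}}$ up to the allowed error; the geometric series sums to $\tfrac{q^{-d-2}}{(1-q^{-1})^2}$, and multiplying by the prefactor $\tfrac{2\cdot3}{([\infty]_{q^{-1}}!)^2}$ from \eqref{wdb} gives the first asserted formula.

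For $W_d^3$ I would substitute $\psi=\tfrac{q^{-d-1}}{1-q^{-1}}+r$ into the exact identity $W_d^3=\tfrac{3}{([\infty]_{q^{-1}}!)^3}\sum_{d_1+d_3=d}\psi(d_1)\psi(d_3)+q^{-2d-2}(\cdots)$ coming from \eqref{wdc}, and expand the product into three pieces. The leading$\times$leading piece does not depend on how $d$ is split, so the $d+1$ admissible splittings contribute $(d+1)\tfrac{q^{-d-2}}{(1-q^{-1})^2}$, which is the source of the $d+1$ in the statement. The remainder$\times$remainder piece has degree $\le -2d-6$ and is discarded. The two cross pieces are equal by symmetry and total $\tfrac{2q^{-d-1}}{1-q^{-1}}\sum_{d_3=0}^{d}q^{d_3}r(d_3)$; since $q^{d_3}r(d_3)$ has degree $\le -d_3-3$, the upper limit may be raised to $\infty$ at the cost of a term inside $q^{-2d-2}(\cdots)$. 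Thus everything reduces to evaluating $S:=\sum_{d\ge0}q^{d}r(d)$, and matching the statement is equivalent to the closed form $S=\tfrac{1}{1-q}\sum_{i\ge2}\tfrac{q^{-i}}{1-q^{-i}}$.

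The main obstacle is precisely this last evaluation, which I would carry out by Abel summation in the style of Proposition \ref{E2}. Using \eqref{exp} one has $\sum_{d\ge0}\psi(d)t^d=\tfrac{1}{1-t}-\tfrac{(q^{-1};q^{-1})_\infty}{(t;q^{-1})_\infty}$; substituting $t=sq$ and using $(sq;q^{-1})_\infty=(1-sq)(s;q^{-1})_\infty$ yields $\sum_{d\ge0}(sq)^d\psi(d)=\tfrac{1}{1-sq}\bigl(1-\tfrac{(q^{-1};q^{-1})_\infty}{(s;q^{-1})_\infty}\bigr)$, where $s$ is a convergence parameter sent to $1$ at the end. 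Subtracting the divergent piece $\sum_{d\ge0}(sq)^d\tfrac{q^{-d-1}}{1-q^{-1}}=\tfrac{q^{-1}}{(1-q^{-1})(1-s)}$ and letting $s\to1$, the simple poles at $s=1$ cancel --- this is where the derivative $\tfrac{d}{ds}\log\tfrac{(q^{-1};q^{-1})_\infty}{(sq^{-1};q^{-1})_\infty}\big|_{s=1}=\sum_{i\ge1}\tfrac{q^{-i}}{1-q^{-i}}$ enters, exactly as in Proposition \ref{E2} --- leaving $S=\tfrac{1}{(1-q)^2}+\tfrac{1}{1-q}\sum_{i\ge1}\tfrac{q^{-i}}{1-q^{-i}}$. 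Peeling off the $i=1$ term via $\tfrac{q^{-1}}{1-q^{-1}}=-\tfrac{1}{1-q}$ collapses this to $S=\tfrac{1}{1-q}\sum_{i\ge2}\tfrac{q^{-i}}{1-q^{-i}}$, and feeding it back into the cross-term expression produces exactly the factor $-2\sum_{i\ge2}\tfrac{q^{-i}}{1-q^{-i}}$ in the statement. Beyond this identity, the only care needed is the degree bookkeeping used to truncate the various remainders and the finite-part extraction in the $s\to1$ limit.
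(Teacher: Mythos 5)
Your argument is correct and reaches both formulas with the stated error bounds. The overall skeleton coincides with the paper's: both proofs take \eqref{wdb} and \eqref{wdc} as the starting point, extract the leading geometric piece of $\frac{1}{[\infty]_{q^{-1}}!}-\frac{1}{[d]_{q^{-1}}!}$, sum it in closed form to produce the $\frac{q^{-(d+2)}}{(1-q^{-1})^2}$ factors and the $d+1$ from the number of splittings, and discard the remainder-times-remainder piece by degree counting. Where you genuinely diverge is in the cross term. The paper keeps the expansion $\frac{1}{[\infty]_{q^{-1}}!}-\frac{1}{[d_1]_{q^{-1}}!}=\sum_{k>d_1}\frac{q^{-k}}{[k]_{q^{-1}}!}$ and is led to the double sum $\sum_{d_1\ge 0}\sum_{k>d_1}q^{d_1+1-k}\bigl(\frac{1}{[\infty]_{q^{-1}}!}-\frac{1}{[k]_{q^{-1}}!}\bigr)$, evaluated in a separate lemma by interchanging the summations and invoking $\sum_{d\ge 0}\frac{dq^{-d}}{[d]_{q^{-1}}!}=\frac{1}{[\infty]_{q^{-1}}!}\sum_{i\ge 1}\frac{q^{-i}}{1-q^{-i}}$. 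You instead peel off the leading term of $1-(q^{-d-1};q^{-1})_\infty$ via Euler's product formula, which reduces the cross term to the single sum $S=\sum_{d\ge 0}q^{d}r(d)$, and you evaluate $S$ by the same Abel-summation-with-pole-cancellation device as Proposition \ref{E2}. I checked the endgame: the $s\to 1$ limit does give $\frac{1}{(1-q)^2}+\frac{1}{1-q}\sum_{i\ge 1}\frac{q^{-i}}{1-q^{-i}}$, the $i=1$ term exactly cancels $\frac{1}{(1-q)^2}$, and the resulting $S=\frac{1}{1-q}\sum_{i\ge 2}\frac{q^{-i}}{1-q^{-i}}$ is precisely what the stated coefficient $-2\sum_{i\ge 2}\frac{q^{-i}}{1-q^{-i}}$ requires. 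Your route buys a cleaner reduction (one sum instead of two, with the $i\ge 2$ truncation arising from an explicit cancellation rather than a telescoping sum), at the cost of justifying the $s\to 1$ regularization; that justification is no more delicate than what the paper already does in Propositions \ref{E1} and \ref{E2}.
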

\begin{proof}
By formula \eqref{wdb} and
\begin{align*}
& \frac{-3}{([\infty]_{q^{-1}}!)^2}\sum_{d_1>d}
(\frac{1}{[\infty]_{q^{-1}}!}-\frac{1}{[d_1]_{q^{-1}}!})\\
=&  \frac{-3}{([\infty]_{q^{-1}}!)^2} \sum_{d_1>d}
(\sum_{k>d_1} \frac{q^{-k}}{[k]_{q^{-1}}!})\\
=& \frac{-3}{([\infty]_{q^{-1}}!)^2} \sum_{d_1>d}
(\sum_{k>d_1} \frac{q^{-k}}{[\infty]_{q^{-1}}!}-(\sum_{k>d_1} q^{-k}(\frac{1}{[\infty]_{q^{-1}}!}-\frac{1}{[k]_{q^{-1}}!}))\\
=& \frac{-3}{([\infty]_{q^{-1}}!)^3}
\frac{q^{-d-2}}{(1-q^{-1})^2} + \frac{3 q^{-2d-4}}{([\infty]_{q^{-1}}!)^2} \cdot(1+a_1q^{-1} +\cdots)
\end{align*}
we get the first formula. For the second one, by formula \eqref{wdc} we need to calculate:

\begin{align*}
& \sum_{d_1+d_3=d} \frac{3}{[\infty]_{q^{-1}}!}
(\frac{1}{[\infty]_{q^{-1}}!}-\frac{1}{[d_3]_{q^{-1}}!})
(\frac{1}{[\infty]_{q^{-1}}!}-\frac{1}{[d_1]_{q^{-1}}!})\\
=& \sum_{d_1+d_3=d} \frac{3}{[\infty]_{q^{-1}}!}
(\sum_{k>d_1} \frac{q^{-k}}{[k]_{q^{-1}}!})(\sum_{l>d_2} \frac{q^{-l}}{[l]_{q^{-1}}!})\\
=& \sum_{d_1+d_3=d} \frac{3}{[\infty]_{q^{-1}}!} \bigg[
(\sum_{k>d_1} \frac{q^{-k}}{[k]_{q^{-1}}!}-\frac{q^{-k}}{[\infty]_{q^{-1}}!})
 (\sum_{l>d_3} \frac{q^{-l}}{[l]_{q^{-1}}!}-\frac{q^{-l}}{[\infty]_{q^{-1}}!})+\\
 &2(\sum_{k>d_1} \frac{q^{-k}}{[k]_{q^{-l}}!}-\frac{q^{-k}}{[\infty]_{q^{-1}}!})
 (\sum_{l>d_3}\frac{q^{-l}}{[\infty]_{q^{-1}}!})+ (\sum_{k>d_1}\frac{q^{-l}}{[\infty]_{q^{-k}}!}) (\sum_{l>d_3}\frac{q^{-l}}{[\infty]_{q^{-1}}!}) \bigg]
 \\
=&  \sum_{d_1+d_3=d} q^{-(d_1+1+d_1+2+d_3+1+d_3+2)}(\cdots) +
 \frac{3 q^{-(d_1+1+d_3+1)}}{(1-q^{-1})^2([\infty]_{q^{-1}}!)^3}
\\
&\qquad +\sum_{0 \leq d_1 \leq d} 2(\sum_{k>d_1} \frac{q^{-k}}{[k]_{q^{-l}}!}-\frac{q^{-k}}{[\infty]_{q^{-1}}!})
 (\frac{q^{-(d-d_1)-1}}{(1-q^{-1})[\infty]_{q^{-1}}!})
 \\
=&  q^{-2d-6}(\cdots)+\frac{3 (d+1) q^{-(d+2)}}{(1-q^{-1})^2([\infty]_{q^{-1}}!)^3}+ \frac{6q^{-(d+2)}}{(1-q^{-1})([\infty]_{q^{-1}}!)^2}
\\
& \qquad\qquad \cdot
\sum_{0 \leq d_1 } q^{d_1+1}(\sum_{k>d_1} \frac{q^{-k}}{[k]_{q^{-l}}!}-\frac{q^{-k}}{[\infty]_{q^{-1}}!})
 +  q^{-2d-4}(\cdots).
\end{align*}
We have used Lemma \ref{lm:leading} repeatedly.
The proof is completed by using the following Lemma.
\end{proof}
\begin{lem}
The following identity holds:
$$
\sum_{d_1\geq 0} \sum_{k>d_1} q^{d_1+1-k}(\frac{1}{[\infty]_{q^{-1}}!}-\frac{1}{[k]_{q^{-1}}!})=\frac{1}{(1-q^{-1})[\infty]_{q^{-1}}!} \sum_{i\geq 2} \frac{q^{-i}}{1-q^{-i}}.
$$
\end{lem}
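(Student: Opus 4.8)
The plan is to interchange the order of summation, evaluate the resulting inner finite geometric sum, and then reduce everything to Proposition \ref{E1}. Throughout, abbreviate $c_k := \frac{1}{[\infty]_{q^{-1}}!}-\frac{1}{[k]_{q^{-1}}!}$, and regard both sides as formal Laurent series in $q^{-1}$. By Lemma \ref{lm:leading} the top $q$-degree of $c_k$ is $q^{-(k+1)}$, so the term indexed by $(d_1,k)$ has top degree $q^{d_1+1-k}\cdot q^{-(k+1)}=q^{d_1-2k}$; since $d_1<k$, any fixed power $q^{-N}$ receives contributions from only finitely many $(d_1,k)$ (one needs $k<N$). This is the one genuinely technical point, and it justifies the interchange of the two summations that follows.

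First I would swap the sums: for fixed $k\geq 1$ the index $d_1$ runs over $0\leq d_1\leq k-1$, so
\[
\sum_{d_1\geq 0}\sum_{k>d_1} q^{d_1+1-k}c_k
= \sum_{k\geq 1} c_k\, q^{-k}\sum_{d_1=0}^{k-1} q^{d_1+1}
= \sum_{k\geq 1} c_k\cdot\frac{1-q^{-k}}{1-q^{-1}},
\]
where the last equality is the elementary geometric-sum identity $q^{-k}\sum_{d_1=0}^{k-1}q^{d_1+1}=\frac{1-q^{-k}}{1-q^{-1}}$. Thus the whole expression equals $\frac{1}{1-q^{-1}}\sum_{k\geq 1}c_k(1-q^{-k})$, and it remains to identify this last sum with $\frac{1}{[\infty]_{q^{-1}}!}\sum_{i\geq 2}\frac{q^{-i}}{1-q^{-i}}$.

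The key algebraic simplification is that, because $[k]_{q^{-1}}!=[k-1]_{q^{-1}}!\cdot(1-q^{-k})$, one has $\frac{1-q^{-k}}{[k]_{q^{-1}}!}=\frac{1}{[k-1]_{q^{-1}}!}$, and hence
\[
c_k(1-q^{-k})=\frac{1-q^{-k}}{[\infty]_{q^{-1}}!}-\frac{1}{[k-1]_{q^{-1}}!}.
\]
Reindexing by $j=k-1$ and splitting gives
\[
\sum_{k\geq 1}c_k(1-q^{-k})
=\sum_{j\geq 0}\Bigl(\frac{1}{[\infty]_{q^{-1}}!}-\frac{1}{[j]_{q^{-1}}!}\Bigr)
-\frac{1}{[\infty]_{q^{-1}}!}\sum_{j\geq 0}q^{-j-1}.
\]

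Finally I would apply Proposition \ref{E1} to the first sum, which contributes $\frac{1}{[\infty]_{q^{-1}}!}\sum_{i\geq 1}\frac{q^{-i}}{1-q^{-i}}$, and evaluate the second geometric sum as $\sum_{j\geq 0}q^{-j-1}=\frac{q^{-1}}{1-q^{-1}}=\frac{q^{-1}}{1-q^{-1}}$, which is precisely the $i=1$ term of the first sum. The two cancel, leaving $\frac{1}{[\infty]_{q^{-1}}!}\sum_{i\geq 2}\frac{q^{-i}}{1-q^{-i}}$. Dividing by the prefactor $(1-q^{-1})$ produced in the second paragraph yields the claimed identity. I expect no serious obstacle beyond the bookkeeping of the formal-series interchange noted above; everything else is a direct reduction to Proposition \ref{E1} together with the factorial identity for $[k]_{q^{-1}}!$.
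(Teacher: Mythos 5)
Your proof is correct, and it takes a genuinely different route from the one in the paper. The paper first expands the difference $\frac{1}{[\infty]_{q^{-1}}!}-\frac{1}{[k]_{q^{-1}}!}$ as the tail $\sum_{l>k}\frac{q^{-l}}{[l]_{q^{-1}}!}$, producing a triple sum over $(d_1,k,l)$; after carrying out the geometric sums over $k$ and $d_1$ it is left with $\sum_{l\geq 0}\frac{1}{[l]_{q^{-1}}!}\bigl(\frac{lq^{-l}}{1-q^{-1}}-\frac{q^{-l}-q^{-2l}}{(1-q^{-1})^2}\bigr)$, which it evaluates by a separate computation of $\sum_{d\geq 0}\frac{dq^{-d}}{[d]_{q^{-1}}!}$ via $t\frac{d}{dt}$ applied to the $q$-exponential. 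You instead sum over $d_1$ first, reducing everything to $\frac{1}{1-q^{-1}}\sum_{k\geq 1}c_k(1-q^{-k})$, and then exploit the elementary cancellation $\frac{1-q^{-k}}{[k]_{q^{-1}}!}=\frac{1}{[k-1]_{q^{-1}}!}$ so that the whole sum collapses onto Proposition \ref{E1}, with the $i=1$ term removed by an ordinary geometric series. This buys a shorter argument that recycles an identity already proved in the paper rather than requiring the additional derivative identity; the paper's version, on the other hand, makes the weighted sum $\sum_l \frac{l q^{-l}}{[l]_{q^{-1}}!}$ explicit, which is in the same spirit as its other manipulations in that section. Your justification of the interchange of summation (finitely many $(d_1,k)$ contributing to each power of $q^{-1}$, since $d_1<k$ forces $d_1-2k\leq -k-1$) and your splitting of $\sum_k c_k(1-q^{-k})$ into two separately convergent formal series are both sound, so there is no gap.
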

\begin{proof}
\begin{align*}
&\sum_{d_1\geq 0} q^{d_1+1} \sum_{k>d_1} q^{-k}(\frac{1}{[\infty]_{q^{-1}}!}-\frac{1}{[k]_{q^{-1}}!})=
\sum_{d_1\geq 0}q^{d_1+1}\sum_{k>d_1} (\sum_{l>k} \frac{q^{-k-l}}{[l]_{q^{-1}}!})\\=&
\sum_{d_1\geq 0}q^{d_1+1}\sum_{l>d_1+1} (\sum_{l>k>d_1} \frac{q^{-k-l}}{[l]_{q^{-1}}!})
=\sum_{d_1\geq 0}\sum_{l>d_1+1} \frac{q^{-l}}{[l]_{q^{-1}}!} \cdot \frac{1-q^{-(l-d_1-1)}}{1-q^{-1}}\\
=& \frac{1}{1-{q^{-1}}} \sum_{l > 1} \sum_{0\leq d_1 <l-1} \frac{q^{-l}}{[l]_{q^{-1}}!} - \frac{q^{-(2l-d_1-1)}}{[l]_{q^{-1}}!}  \\
=& \sum_{l > 1} \frac{1}{[l]_{q^{-1}}!} (\frac{(l-1) q^{-l}}{(1-q^{-1})}-
\frac{(q^{-l-1}-q^{-2l})}{(1-q^{-1})^2})\\
=& \sum_{l \geq 0} \frac{1}{[l]_{q^{-1}}!} \biggl(\frac{l q^{-l}}{1-q^{-1}}-
\frac{q^{-l}-q^{-2l}}{(1-q^{-1})^2}\biggr).
\end{align*}
Since
\ben
\sum_{d\geq 0}\frac{d q^{-d}}{[d]_{q^{-1}}!} &=& t\frac{d}{dt}\sum_{d\geq 0}\frac{t^d q^{-d}}{[d]_{q^{-1}}!}\biggr|_{t=1}
\\&=& t\frac{d}{dt} \frac{1}{(tq^{-1},q^{-1})_{\infty}} \biggr|_{t=1}
= \frac{1}{[\infty]_{q^{-1}}!} \sum_{i\geq 1} \frac{q^{-i}}{1-q^{-i}}.
\een
We have
\begin{align*}
 &\sum_{l \geq 0} \frac{1}{[l]_{q^{-1}}!} (\frac{l q^{-l}}{1-q^{-1}}-
\frac{q^{-l}-q^{-2l}}{(1-q^{-1})^2})\\
=& \frac{1}{(1-q^{-1})[\infty]_{q^{-1}}!} \sum_{i\geq 1} \frac{q^{-i}}{1-q^{-i}}
-
\frac{1}{(1-q^{-1})^2}\biggl(\frac{1}{[\infty]_{q^{-1}}!}-\frac{1-q^{-1}}{[\infty]_{q^{-1}}!} \biggr)\\
=& \frac{1}{(1-q^{-1})[\infty]_{q^{-1}}!} \sum_{i\geq 2}\frac{q^{-i}}{1-q^{-i}}.
\end{align*}
\end{proof}

\subsection{Contribution of $\I^{(2)}_d(q)$}

Introduce
\ben
&& \widetilde \W_{(m),(n)}(q)
= \sum^{n}_{k=0} \frac{q^{-k(m+1)}}{[m]_{q^{-1}}![k]_{q^{-1}}!}=\frac{1}{[\infty]_{q^{-1}}!}T^x_n\left(\frac{(q^{-m-1};q^{-1})_\infty}{(xq^{-m-1};q^{-1})_\infty}\right).
\een
By \eqref{Wmn},
\be
\widetilde \W_{(m),(n)}(q) = q^{-mn+(m+n)/2} \W_{(m),(n)}(q).
\ee
Because it has been proved in \cite{zhou2} that $\W_{\mu,\nu}(q)= \W_{\nu, \mu}(q)$,
it follows that $\widetilde \W_{(m),(n)}(q) = \widetilde \W_{(n),(m)}(q)$, i.e.
\be
\sum^{m}_{k=0} \frac{q^{-k(n+1)}}{[n]_{q^{-1}}![k]_{q^{-1}}!}
= \sum^{n}_{k=0} \frac{q^{-k(m+1)}}{[m]_{q^{-1}}![k]_{q^{-1}}!}.
\ee

\begin{lem}
The following identities hold:
\ben
&&\W_{(m),(n-1,1)}(q) \\
=&&  q^{m(n-1)-1-\frac{1}{2}(m+n)}\cdot \bigg[\frac{1}{1-q^{-1}}
\widetilde \W_{(m),(n-1)}(q) -
\frac{q^{-(m+1)(n-1)}}{[m]_{q^{-1}}![n]_{q^{-1}}!}\bigg].
\een
\end{lem}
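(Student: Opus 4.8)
The plan is to start directly from the definition \eqref{w}, which gives
$\W_{(m),(n-1,1)}(q) = s_{(m)}(q^\varrho)\, s_{(n-1,1)}(q^{(m)+\varrho})$,
and to reduce everything to the one-row Schur functions already evaluated in the previous subsection. For the first factor I would invoke \eqref{eqn:sn(qrho)}, so that $s_{(m)}(q^\varrho) = q^{-m/2}/[m]_{q^{-1}}!$. For the second factor the essential input is the Jacobi--Trudi (Pieri) expansion $s_{(n-1,1)} = h_{n-1}h_1 - h_n$ recorded in the statement, which turns $s_{(n-1,1)}(q^{(m)+\varrho})$ into a polynomial expression in the complete symmetric functions $h_j$ evaluated at the specialization $q^{(m)+\varrho}$. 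Each of these is given explicitly by \eqref{eqn:sn(qm+rho)}, namely $h_j(q^{(m)+\varrho}) = q^{-j/2}\sum_{k=0}^{j} q^{mj-(m+1)k}/[k]_{q^{-1}}!$.

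Next I would carry out the subtraction $h_{n-1}(q^{(m)+\varrho})\,h_1(q^{(m)+\varrho}) - h_n(q^{(m)+\varrho})$ by splitting $h_1(q^{(m)+\varrho}) = q^{-1/2}\bigl(q^m + \tfrac{q^{-1}}{1-q^{-1}}\bigr)$ into its leading part $q^{m-1/2}$ and its tail $\tfrac{q^{-3/2}}{1-q^{-1}}$. Multiplying $h_{n-1}$ by $q^{m-1/2}$ promotes each exponent $q^{m(n-1)-(m+1)k}$ to $q^{mn-(m+1)k}$, producing exactly $\sum_{k=0}^{n-1}$ of the same summand occurring in $h_n$; subtracting $h_n$ then cancels all terms except the $k=n$ one, which telescopes to the single correction $-q^{-n}/[n]_{q^{-1}}!$. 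The remaining contribution, coming from the tail of $h_1$ times $h_{n-1}$, is proportional to $\sum_{k=0}^{n-1} q^{-(m+1)k}/[k]_{q^{-1}}!$, which is precisely $[m]_{q^{-1}}!\,\widetilde\W_{(m),(n-1)}(q)$ by the definition of $\widetilde\W$.

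Finally I would reassemble the two factors, extract the common prefactor $q^{m(n-1)-1-\frac12(m+n)}$, and verify that the leftover exponent on the correction term equals $-(m+1)(n-1)$; this is the one place where a sign or exponent slip is easy, so I would check it explicitly via $-n - m(n-1) + 1 = -mn+m-n+1 = -(m+1)(n-1)$. After this the bracket reads $\tfrac{1}{1-q^{-1}}\widetilde\W_{(m),(n-1)}(q) - q^{-(m+1)(n-1)}/\bigl([m]_{q^{-1}}![n]_{q^{-1}}!\bigr)$, matching the claimed right-hand side. The only genuine subtlety, and the main thing to get right, is the telescoping cancellation between the $q^m$-part of $h_1$ times $h_{n-1}$ and the single summand $h_n$; everything else is routine $q$-power bookkeeping, so I expect no serious obstacle.
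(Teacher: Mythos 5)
Your proposal is correct and follows essentially the same route as the paper's proof: both factor $\W_{(m),(n-1,1)}$ via \eqref{w}, expand $s_{(n-1,1)}=h_{n-1}h_1-h_n$, specialize each $h_j$ using \eqref{eqn:sn(qrho)} and \eqref{eqn:sn(qm+rho)}, split $h_1(q^{(m)+\varrho})$ into $q^{m-1/2}$ plus its tail, and let the $q^{m-1/2}$ part telescope against $h_n$ to leave the single $k=n$ correction. The exponent check $-n-m(n-1)+1=-(m+1)(n-1)$ is exactly the bookkeeping the paper performs implicitly, so there is nothing to add.
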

\begin{proof}
Recall
\ben
\W_{(m),(n-1,1)}(q) & = & s_{(m)}(q^\varrho)
s_{(n-1,1)}(q^{(m)+\varrho}) \\
& = & \frac{q^{-m/2}}{[m]_{q^{-1}}!}\cdot
s_{(n-1,1)}(q^{(m)+\varrho}).
\een
Because $s_{(n-1,1)} = h_{n-1}h_1-h_{n}=s_{(n-1)} s_{(1)} - s_{(n)}$,
\ben
& & s_{(n-1,1)}(q^{(m)+\varrho})
= s_{(n-1)}(q^{(m)+\varrho})\cdot s_{(1)}(q^{(m)+\varrho}) - s_{(n)}(q^{(m)+\varrho})\\
&=&
q^{-(n-1)/2} \sum_{k=0}^{n-1} \frac{q^{m(n-1)-(m+1)k}}{[k]_{q^{-1}}!} \cdot q^{-1/2}(q^m+\frac{q^{-1}}{1-q^{-1}}) \\
& - & q^{-n/2} \sum_{k=0}^{n} \frac{q^{mn-(m+1)k}}{[k]_{q^{-1}}!}\\
&=& \frac{q^{mn-n/2-1}}{1-q^{-1}} \sum_{k=0}^{n-1} \frac{q^{-(m+1)k}}{[k]_{q^{-1}}!} -
\frac{q^{-3n/2}}{[n]_{q^{-1}}!}.
\een
Here we have used \eqref{eqn:sn(qm+rho)}.
\be
s_{(n)}(q^{(m)+\varrho})
= q^{-n/2} \sum_{k=0}^n \frac{q^{mn-(m+1)k}}{[k]_{q^{-1}}!}.
\ee
\end{proof}

By the above Lemma
we can write down the  contribution of $\I_d(q)$ as follow:
\begin{align*}
&  \I^{(2)}_d(q)=3 q^{-(g(d)-1)} \sum_{\sum_i d_i=d, d_2-1\geq 1} q^{\half (\kappa_{(d_1)}+\kappa_{(d_2-1,1)}+\kappa_{(d_3)})} \cdot \label{I2d}\\
& \quad\quad\quad\quad\quad\quad\quad\quad\quad\quad\quad\quad \W_{(d_1),(d_2-1,1)}\W_{(d_2-1,1),(d_3)}\W_{(d_3),(d_1)}\nonumber\\
=& 3 \sum_{\sum_i d_i=d, d_2\geq 2} q^{-(d+2)}
  \cdot (\frac{\widetilde\W_{(d_1),(d_2-1)}}{1-q^{-1}} -
\frac{q^{-(d_1+1)(d_2-1)}}{[d_1]_{q^{-1}}![d_2]_{q^{-1}}!})\nonumber\\
&\quad\quad\quad\quad\quad\quad\quad
\cdot (\frac{\widetilde\W_{(d_2-1),(d_3)}}{1-q^{-1}}-
\frac{q^{-(d_3+1)(d_2-1)}}{[d_3]_{q^{-1}}![d_2]_{q^{-1}}!})\cdot \W_{(d_3),(d_1)}\\
= & \frac{3q^{-d-2}}{(1-q^{-1})^2} \sum_{\sum_i d_i=d-1, d_2\geq 1} \widetilde\W_{(d_1),(d_2)}\widetilde\W_{(d_2),(d_3)}\widetilde\W_{(d_3),(d_1)}\\
& -3q^{-d-2}\sum_{\sum_i d_i=d, d_2\geq 2} \frac{2q^{-(d_3+1)(d_2-1)}\widetilde\W_{(d_1),(d_2-1)}\widetilde\W_{(d_3),(d_1)}}{(1-q^{-1})[d_1]_{q^{-1}}![d_2]_{q^{-1}}!}
+ q^{-d-1}(\cdots) .
\end{align*}
We can rewrite it as
\begin{align*}
& \I^{(2)}_d(q)=\I^{(2a)}_d(q)
+\I^{(2a')}_d(q)+\I^{(2b)}_d(q)+ q^{-d-1}(\cdots) .
\end{align*}
where

\begin{align*}
\I^{(2a)}_d(q)&= \frac{3q^{-d-2}}{(1-q^{-1})^2} \sum_{\sum_i d_i=d-1} \widetilde\W_{(d_1),(d_2)}\widetilde\W_{(d_2),(d_3)}\widetilde\W_{(d_3),(d_1)}\\
 \I^{(2a')}_d(q)&=  -\frac{3q^{-d-2}}{(1-q^{-1})^2}  \sum_{d_1+d_3=d-1} \widetilde\W_{(d_1),(0)}\widetilde\W_{(0),(d_3)}\widetilde\W_{(d_3),(d_1)}\\
\I^{(2b)}_d(q)&= -6q^{ -(d+2)}\sum_{\sum_i d_i=d, d_2\geq 2} \frac{q^{-(d_3+1)(d_2-1)}\widetilde\W_{(d_1),(d_2-1)}\widetilde\W_{(d_3),(d_1)}}
{(1-q^{-1})[d_3]_{q^{-1}}![d_2]_{q^{-1}}!}\\
\end{align*}

\begin{prop}\label{I2b}
We have
\begin{align*}
\I^{(2b)}_d(q)=&\frac{-6 q^{ -(d+2)} \cdot q^{-1} }{(1-q^{-1})^3 ([\infty]_{q^{-1}}!)^3} + q^{-2d}(\cdots)
\end{align*}
\end{prop}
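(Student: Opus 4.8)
The plan is to factor out the common prefactor and reduce to a single clean sum. Writing
\[
\I^{(2b)}_d(q) = \frac{-6\,q^{-(d+2)}}{1-q^{-1}}\cdot S_d(q), \qquad
S_d(q) = \sum_{\substack{d_1+d_2+d_3=d\\ d_2\geq 2}} \frac{q^{-(d_3+1)(d_2-1)}\,\widetilde\W_{(d_1),(d_2-1)}\,\widetilde\W_{(d_3),(d_1)}}{[d_3]_{q^{-1}}![d_2]_{q^{-1}}!},
\]
it suffices to prove that $S_d(q) = \frac{q^{-1}}{(1-q^{-1})^2[\infty]_{q^{-1}}!^3} + q^{-(d-2)}(\cdots)$, since multiplying by $\frac{-6q^{-(d+2)}}{1-q^{-1}}$ then yields the asserted formula modulo $q^{-2d}$. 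The overall strategy mirrors that of Propositions~\ref{E2} and~\ref{wbwc}: isolate the dominant subfamily of index triples by a degree count, replace the finite-size data by its $d_1\to\infty$ limit, and evaluate the resulting sum in closed form.

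First I would carry out the degree analysis. Since $\deg_q \widetilde\W_{(m),(n)} = 0$ (the $k=0$ term $1/[m]_{q^{-1}}!$ is the top-degree piece) and $\deg_q \frac{1}{[d_i]_{q^{-1}}!} = 0$, every summand of $S_d$ has $\deg_q = -(d_3+1)(d_2-1)\leq -1$, the bound coming from $d_2\geq 2$. A summand can therefore affect the coefficient of $q^{-\delta}$ only for $\delta \geq (d_3+1)(d_2-1)$; hence the triples with $(d_3+1)(d_2-1)\geq d-2$ contribute solely to $q^{-(d-2)}(\cdots)$ and may be discarded. The surviving triples have $d_2$ and $d_3$ bounded and $d_1 = d-d_2-d_3$ large.

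Next I would pass to the limit $d_1\to\infty$. Using the symmetry $\widetilde\W_{(d_3),(d_1)} = \widetilde\W_{(d_1),(d_3)}$ and the fact that $\widetilde\W_{(d_1),(n)} = \sum_{k=0}^n \frac{q^{-k(d_1+1)}}{[d_1]_{q^{-1}}![k]_{q^{-1}}!} \to \frac{1}{[\infty]_{q^{-1}}!}$ as $d_1\to\infty$ (only the $k=0$ term survives, with error of $\deg_q\leq -(d_1+1)$ controlled by Lemma~\ref{lm:leading}), both $\widetilde\W$ factors may be replaced by $1/[\infty]_{q^{-1}}!$. Because $d_1 = d-d_2-d_3$, these replacement errors, together with the error from extending the $d_3$- and $d_2$-summations to all of $\Z_{\geq 0}$ (the added terms carry $q^{-(d_3+1)(d_2-1)}$ with either $d_3>d-d_2$ or $d_2-1>d-1$), all have $\deg_q\leq -d$ in $S_d$, hence after the prefactor $q^{-(d+2)}$ fall below $q^{-2d}$. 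This reduces the computation to
\[
S_d(q) = \frac{1}{[\infty]_{q^{-1}}!^2}\sum_{\substack{d_2\geq 2\\ d_3\geq 0}} \frac{q^{-(d_3+1)(d_2-1)}}{[d_3]_{q^{-1}}![d_2]_{q^{-1}}!} + q^{-(d-2)}(\cdots).
\]
This uniform error bookkeeping, rather than the final evaluation, is the real obstacle; the decay factor $q^{-(d_3+1)(d_2-1)}$ is precisely what forces any configuration with $d_1$ not large to be deeply subleading, and assembling all the estimates carefully is where the work lies.

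Finally I would evaluate the double sum. Setting $m=d_2-1$, summing over $d_3$ by the $q$-exponential~\eqref{exp} gives $\sum_{d_3\geq 0} \frac{q^{-(d_3+1)m}}{[d_3]_{q^{-1}}!} = \frac{q^{-m}}{(q^{-m};q^{-1})_\infty}$, and the identity $[m+1]_{q^{-1}}!\,(q^{-m};q^{-1})_\infty = (1-q^{-m})(1-q^{-m-1})\,[\infty]_{q^{-1}}!$ (which follows from $(q^{-m};q^{-1})_\infty = [\infty]_{q^{-1}}!/[m-1]_{q^{-1}}!$) collapses the sum to $\frac{1}{[\infty]_{q^{-1}}!^3}\sum_{m\geq 1}\frac{q^{-m}}{(1-q^{-m})(1-q^{-m-1})}$. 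The last series telescopes: with $x=q^{-1}$ one has $\frac{x^m}{(1-x^m)(1-x^{m+1})} = \frac{1}{1-x}\bigl(\frac{1}{1-x^m}-\frac{1}{1-x^{m+1}}\bigr)$, so the sum equals $\frac{1}{1-x}\cdot\frac{x}{1-x} = \frac{q^{-1}}{(1-q^{-1})^2}$. This gives $S_d(q) = \frac{q^{-1}}{(1-q^{-1})^2[\infty]_{q^{-1}}!^3} + q^{-(d-2)}(\cdots)$, and multiplying by the prefactor completes the proof.
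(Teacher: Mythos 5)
Your proposal is correct and follows essentially the same route as the paper's proof: replace the two $\widetilde\W$ factors by $1/[\infty]_{q^{-1}}!$ up to errors of degree $\leq -d$, extend the $(d_2,d_3)$-sum to infinity, sum over $d_3$ with the $q$-exponential identity \eqref{exp}, and telescope over $d_2$ to get $q^{-1}/(1-q^{-1})^2$. The only (harmless) blemish is the remark that surviving triples necessarily have $d_1$ large — they need not (e.g.\ $d_2=2$, $d_3=d-4$, $d_1=2$) — but your uniform error estimate $\deg_q\leq -(d_3+1)(d_2-1)-(d_1+1)\leq -d$ covers all cases anyway, so the argument stands.
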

\begin{proof}

\begin{align*}
&\sum_{\sum_i d_i=d, d_2\geq 2} \frac{q^{-(d_3+1)(d_2-1)}\widetilde\W_{(d_1),(d_2-1)}\widetilde\W_{(d_3),(d_1)}}
{(1-q^{-1})[d_3]_{q^{-1}}![d_2]_{q^{-1}}!}
\\
=&\sum_{\sum_i d_i=d, d_2\geq 2} \frac{q^{-(d_3+1)(d_2-1)}}{(1-q^{-1})[d_3]_{q^{-1}}![d_2]_{q^{-1}}!}
\\ &\qquad \cdot \frac{1}{[\infty]_{q^{-1}}!}T^x_{d_2}\left(\frac{(q^{-d_1-1};q^{-1})_\infty}{(xq^{-d_1-1};q^{-1})_\infty}\right).
\frac{1}{[\infty]_{q^{-1}}!}T^x_{d_3}\left(\frac{(q^{-d_2-1};q^{-1})_\infty}{(xq^{-d_2-1};q^{-1})_\infty}\right),
\end{align*}
and
$$
T^x_{d_2}\left(\frac{(q^{-d_1-1};q^{-1})_\infty}{(xq^{-d_1-1};q^{-1})_\infty}\right)
-1 = q^{(d_2+1)(d_1+1)}(\cdots),
$$
we have
$$
\I^{(2b)}_d(q) = -6q^{-d-2} \sum_{d_2+d_3\leq d, d_2\geq 2} \frac{q^{-(d_3+1)(d_2-1)}}{(1-q^{-1})[d_3]_{q^{-1}}![d_2]_{q^{-1}}!
([\infty]_{q^{-1}}!)^2}+ q^{-2d-2}(\cdots) .
$$
We now take the summations.
By the q-analog of binomial theorem:
\begin{align*}
&\sum_{d_3\geq 0} \frac{q^{-(d_3+1)(d_2-1)}}{[d_3]_{q^{-1}}!
} = q^{-(d_2-1)}\frac{1}{(q^{-(d_2-1)};q^{-1})_{\infty}}
=q^{-(d_2-1)}\frac{[d_2-2]_{q^{-1}}!}{[\infty]_{q^{-1}}!},
\end{align*}
we have
\begin{align*}
&\sum_{d_2\geq 2, d_3\geq 0} \frac{q^{-(d_3+1)(d_2-1)}}{(1-q^{-1})[d_3]_{q^{-1}}![d_2]_{q^{-1}}!
} \\
=&
\frac{1}{(1-q^{-1}) ([\infty]_{q^{-1}}!)}\sum_{d_2\geq 2} \frac{q^{-(d_2-1)}}{(1-q^{-(d_2-1)})(1-q^{-d_2})
}\\
=& \frac{1}{(1-q^{-1})^2 ([\infty]_{q^{-1}}!)}\sum_{d_2\geq 2} ( \frac{1}{1-q^{-(d_2-1)}}
-\frac{1}{1-q^{-d_2}})\\
=& \frac{1}{(1-q^{-1})^2 ([\infty]_{q^{-1}}!)} (\frac{1}{1-q^{-1}} -1)
= \frac{q^{-1}}{(1-q^{-1})^3 ([\infty]_{q^{-1}}!)}.
\end{align*}
Noting the fact that
$$
 \sum_{d_2+d_3>d, d_2\geq 2} \frac{q^{-(d_3+1)(d_2-1)}}{(1-q^{-1})[d_3]_{q^{-1}}![d_2]_{q^{-1}}!
([\infty]_{q^{-1}}!)^2
}= q^{-d+1}(\cdots),
$$
we finish the proof.
\end{proof}
\begin{prop}\label{I2aa} We have
\ben
\I^{(2a')}_d(q)= -\frac{3q^{-(d+2)}}{(1-q^{-1})^2 ([\infty]_{q^{-1}}!)^3}
\biggl(d-2 \sum_{i\geq 1}\frac{q^{-i}}{1-q^{-i}}\biggr) \biggl|_{q^{\geq -2d-1}}.
\een
\end{prop}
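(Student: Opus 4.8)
The plan is to reduce $\I^{(2a')}_d(q)$ to an explicit sum over compositions $d_1+d_3=d-1$ and then extract its top-degree behaviour by expanding each factor around its ``$1/[\infty]_{q^{-1}}!$'' leading value. First I would evaluate the two boundary factors. Since the $n=0$ case of the defining sum gives $\widetilde\W_{(m),(0)}(q)=1/[m]_{q^{-1}}!$, the symmetry $\widetilde\W_{(m),(n)}=\widetilde\W_{(n),(m)}$ yields $\widetilde\W_{(d_1),(0)}=1/[d_1]_{q^{-1}}!$ and $\widetilde\W_{(0),(d_3)}=1/[d_3]_{q^{-1}}!$, so that
\be
\I^{(2a')}_d(q)= -\frac{3q^{-d-2}}{(1-q^{-1})^2}\sum_{d_1+d_3=d-1}\frac{\widetilde\W_{(d_3),(d_1)}(q)}{[d_1]_{q^{-1}}!\,[d_3]_{q^{-1}}!}.
\ee
This is the same structural type of sum already treated for $W_d^{2'}$ and $W_d^3$ in Proposition \ref{wbwc}, so I expect the same bookkeeping to apply.

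Next I would write each of the three factors as a leading value plus a correction whose degree is controlled by Lemma \ref{lm:leading}. Put $A_m:=\frac{1}{[\infty]_{q^{-1}}!}-\frac{1}{[m]_{q^{-1}}!}$, so that $\deg_q A_m\le-(m+1)$, and $B_{d_1,d_3}:=\widetilde\W_{(d_3),(d_1)}-\frac{1}{[\infty]_{q^{-1}}!}$, so that $\deg_q B_{d_1,d_3}\le-(d_1+1)(d_3+1)$. Expanding the product $(\tfrac{1}{[\infty]_{q^{-1}}!}-A_{d_1})(\tfrac{1}{[\infty]_{q^{-1}}!}-A_{d_3})(\tfrac{1}{[\infty]_{q^{-1}}!}+B_{d_1,d_3})$ and summing over the $d$ compositions $d_1+d_3=d-1$ splits the sum into: the main term $d/[\infty]_{q^{-1}}!^3$; the single-$A$ terms $-\frac{1}{[\infty]_{q^{-1}}!^2}\sum_{d_1+d_3=d-1}(A_{d_1}+A_{d_3})$; the single-$B$ term $\frac{1}{[\infty]_{q^{-1}}!^2}\sum B_{d_1,d_3}$; and all products of at least two corrections.

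The one genuine identity needed is the evaluation of the single-$A$ contribution. By symmetry $\sum_{d_1+d_3=d-1}(A_{d_1}+A_{d_3})=2\sum_{m=0}^{d-1}A_m$, and Proposition \ref{E1} gives $\sum_{m\ge0}A_m=\frac{1}{[\infty]_{q^{-1}}!}\sum_{i\ge1}\frac{q^{-i}}{1-q^{-i}}$, whose omitted tail $\sum_{m\ge d}A_m$ has $\deg_q\le-(d+1)$. This produces precisely the predicted correction $-\frac{2}{[\infty]_{q^{-1}}!^3}\sum_{i\ge1}\frac{q^{-i}}{1-q^{-i}}$. Everything else is a degree count: I would check that the $A$-tail, the entire $B$-sum (using $(d_1+1)(d_3+1)\ge d_1+d_3+1=d$), and all multi-correction products each have $\deg_q\le-d$; since multiplication by $q^{-d-2}/(1-q^{-1})^2$ does not raise the top degree, each such term lands in degree $\le-2d-2$ and is killed by the truncation $|_{q^{\ge-2d-1}}$.

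The main obstacle is not any single algebraic step but the calibration of this degree count: one must confirm that the threshold $-2d-1$ sits exactly above every discarded contribution, and the tight case is the $B$-sum at the boundary $d_1=0$ (or $d_3=0$), where $(d_1+1)(d_3+1)=d$ is sharp and the corresponding term is annihilated with no margin to spare. Once this is verified, assembling the surviving main part $\frac{1}{[\infty]_{q^{-1}}!^3}\bigl(d-2\sum_{i\ge1}\frac{q^{-i}}{1-q^{-i}}\bigr)$ with the outer factor $-3q^{-d-2}/(1-q^{-1})^2$ and applying the truncation gives the claimed formula.
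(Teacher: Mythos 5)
Your proposal is correct and follows essentially the same route as the paper: the paper also reduces to $\sum_{d_1+d_3=d-1}\widetilde\W_{(d_3),(d_1)}/([d_1]_{q^{-1}}![d_3]_{q^{-1}}!)$, peels off the $\widetilde\W$-correction by Lemma \ref{lm:leading}, and evaluates $\sum_{d_1+d_3=d-1}1/([d_1]_{q^{-1}}![d_3]_{q^{-1}}!)$ via exactly your $A_m$-expansion together with Proposition \ref{E1}. The only cosmetic difference is that the paper organizes this as a two-stage argument with a separate sub-lemma for the latter sum, whereas you expand all three factors simultaneously; the degree bookkeeping, including the tight boundary case $(d_1+1)(d_3+1)=d$, is the same.
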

\begin{proof}
\begin{align*}
&\sum_{d_1+d_3=d-1} \widetilde\W_{(d_1),(0)}\widetilde\W_{(0),(d_3)}\widetilde\W_{(d_3),(d_1)}\\
=&\sum_{d_1+d_3=d-1}
\sum^{d_3}_{k=0} \frac{q^{-k(d_1+1)}}{([d_1]_{q^{-1}}!)^2[d_3]_{q^{-1}}![k]_{q^{-1}}!}\\
=&\sum_{d_1+d_3=d-1} \frac{1}{[d_1]_{q^{-1}}![d_3]_{q^{-1}}!{[\infty]_{q^{-1}}!}}
(T^x_n\left(\frac{(q^{-m-1};q^{-1})_\infty}{(xq^{-m-1};q^{-1})_\infty}\right)
-1)
\\
&\qquad +\sum_{d_1+d_3=d-1} \frac{1}{[d_1]_{q^{-1}}![d_3]_{q^{-1}}![\infty]_{q^{-1}}!}.
\end{align*}
By Lemma \ref{lm:leading} and the following lemma, we finish the proof.
\end{proof}
\begin{lem} We have
$$\sum_{d_1+d_3=d} \frac{1}{[d_1]_{q^{-1}}![d_3]_{q^{-1}}!}
=
\frac{1}{([\infty]_{q^{-1}}!)^2} \biggl((d+1)-2 \sum_{i\geq 1}\frac{q^{-i}}{1-q^{-i}}\biggr) \biggl|_{q^{\geq -d}}
$$
\end{lem}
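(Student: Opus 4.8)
The plan is to reduce everything to the single-variable estimate of Lemma \ref{lm:leading} together with the summation formula of Proposition \ref{E1}, and to control all discarded terms by degree bookkeeping against the truncation $|_{q^{\geq -d}}$. First I would abbreviate $[\infty]! := [\infty]_{q^{-1}}!$ and introduce the error term $E_d := \frac{1}{[\infty]_{q^{-1}}!} - \frac{1}{[d]_{q^{-1}}!}$, so that $\frac{1}{[d]_{q^{-1}}!} = \frac{1}{[\infty]!} - E_d$. By Lemma \ref{lm:leading} one has $\deg_q E_d = -(d+1)$. Expanding the product for each pair with $d_1 + d_3 = d$ then gives
$$\frac{1}{[d_1]_{q^{-1}}![d_3]_{q^{-1}}!} = \frac{1}{[\infty]!^2} - \frac{E_{d_1} + E_{d_3}}{[\infty]!} + E_{d_1}E_{d_3}.$$

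Summing over $d_1 = 0, 1, \dots, d$ (with $d_3 = d - d_1$) produces three contributions. The constant piece yields $\frac{d+1}{[\infty]!^2}$, which accounts for the $(d+1)$ term in the claimed formula. For the quadratic piece I would observe that $\deg_q (E_{d_1}E_{d_3}) = -(d_1+1) - (d_3+1) = -(d+2) \leq -(d+1)$, so each such product lies entirely below the truncation level; hence $\sum_{d_1+d_3=d} E_{d_1}E_{d_3}$ contributes nothing under $|_{q^{\geq -d}}$. For the linear piece, symmetry in $d_1 \leftrightarrow d_3$ collapses the sum to $-\frac{2}{[\infty]!}\sum_{d_1=0}^{d} E_{d_1}$, and here I would compare the finite sum with the full sum computed in Proposition \ref{E1}: the tail $\sum_{d_1 > d} E_{d_1}$ has leading degree $\deg_q E_{d+1} = -(d+2) \leq -(d+1)$, so it too is killed by the truncation. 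Thus, modulo terms of degree $< -d$,
$$\sum_{d_1=0}^d E_{d_1} \equiv \sum_{d_1 \geq 0} E_{d_1} = \frac{1}{[\infty]!}\sum_{i\geq 1}\frac{q^{-i}}{1-q^{-i}}.$$

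Assembling the three pieces, modulo $q^{<-d}$ the left-hand side equals $\frac{d+1}{[\infty]!^2} - \frac{2}{[\infty]!^2}\sum_{i\geq 1}\frac{q^{-i}}{1-q^{-i}}$, which is precisely $\frac{1}{[\infty]!^2}\bigl((d+1) - 2\sum_{i\geq 1}\frac{q^{-i}}{1-q^{-i}}\bigr)\big|_{q^{\geq -d}}$, as asserted. The computation is essentially routine once these reductions are set up; the only point requiring care, and the step I would treat as the main obstacle, is the degree accounting, namely verifying that both the cross term $E_{d_1}E_{d_3}$ and the tail $\sum_{d_1>d}E_{d_1}$ have top degree at most $-(d+1)$ and therefore disappear under $|_{q^{\geq -d}}$. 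It is exactly this observation that lets the finite truncated sum be replaced by the infinite one evaluated in Proposition \ref{E1}.
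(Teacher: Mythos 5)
Your proof is correct and follows essentially the same route as the paper: the same decomposition $\frac{1}{[d_1]![d_3]!}=\frac{1}{[\infty]!^2}-\frac{E_{d_1}+E_{d_3}}{[\infty]!}+E_{d_1}E_{d_3}$ with $E_d=\frac{1}{[\infty]_{q^{-1}}!}-\frac{1}{[d]_{q^{-1}}!}$, the evaluation of the linear piece via Proposition \ref{E1}, and the elimination of the cross term and the tail by the degree bound $\deg_q E_d=-(d+1)$ from Lemma \ref{lm:leading}. The degree bookkeeping you single out as the key point is exactly what the paper relies on as well.
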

\begin{proof}
\ben
\sum_{d_1+d_3=d} \frac{1}{[d_1]_{q^{-1}}![d_3]_{q^{-1}}!}
=
\sum_{d_1+d_3=d} \bigg[ -\frac{2}{[\infty]_{q^{-1}}!}
(\frac{1}{[\infty]_{q^{-1}}!}-\frac{1}{[d_3]_{q^{-1}}!})\\
\qquad\qquad +\frac{1}{([\infty]_{q^{-1}}!)^2} +
(\frac{1}{[\infty]_{q^{-1}}!}-\frac{1}{[d_1]_{q^{-1}}!})
(\frac{1}{[\infty]_{q^{-1}}!}-\frac{1}{[d_3]_{q^{-1}}!})
\bigg]
\een
By Proposition \ref{E1} and Lemma \ref{lm:leading},
$$
\sum_{d\geq d_1\geq 0}  \left(\frac{1}{[\infty]_{q^{-1}}!}-\frac{1}{[d_3]_{q^{-1}}!} \right)
= \frac{1}{[\infty]_{q^{-1}}!} \sum_{i\geq 1} \frac{q^{-i}}{1-q^{-i}}+ q^{-(d+1)}(\cdots),
$$
hence
\ben
\sum_{d_1+d_3=d} \frac{1}{[d_1]_{q^{-1}}![d_3]_{q^{-1}}!}
&=&
\frac{1}{([\infty]_{q^{-1}}!)^2} \biggl((d+1)-2 \sum_{i\geq 1}\frac{q^{-i}}{1-q^{-i}}\biggr)\\
&&+ q^{-(d+1)}(\cdots)+ q^{-(d_1+1+d_2+1)}(\cdots).
\een
\end{proof}

\noindent
\textit{Proof of Theorem \ref{Id2} .}
By Proposition \ref{degreeofId2},
we need to consider:
\begin{align*}
\I(d) = & (-1)^d q^{g(d)-1} \Big[ [W_d^1(q) +W_\infty^2(q)] +  \I^{(2a)}(q)\\
& +[W_d^{2'}(q) +W_d^3(q)  +  \I^{(2a')}(q) +  \I^{(2b)}(q) ]+ q^{-2d}(\cdots)\Big].
\end{align*}
Recall we have shown that (cf. \eqref{eqn:W1} and \eqref{eqn:W2inf}):
$$
 W_d^1(q) +W_\infty^2(q) =
 \frac{1}{[\infty]_{q^{-1}}!^3}  \biggl(\binom{d+2}{2}
- 3 \sum_{i\geq 1} \frac{q^{-i}}{(1- q^{-i})^2}\biggr).
$$
Also by Theorem \ref{thm:Wd} we have
\ben
 &&\I^{(2a)}(q) = \frac{3q^{-d-2}}{(1-q^{-1})^2} \cdot W_{d-1}(q)\\
 & =&
  \frac{1}{[\infty]_{q^{-1}}!^3} \frac{3q^{-d-2}}{(1-q^{-1})^2} \biggl(\binom{d+1}{2}
- 3 \sum_{i\geq 1} \frac{q^{-i}}{(1- q^{-i})^2}\biggr)\biggl|_{q^{\geq -2d-2}}
\een
By Proposition \ref{wbwc}, Proposition \ref{I2aa} and Proposition \ref{I2b} proved in the last two subsections, we have
\begin{align*}
&W_d^{2'}(q) +W_d^3(q)  +  \I^{(2a')}(q) +  \I^{(2b)}(q)
\\=&
\frac{2\cdot 3 q^{-(d+2)}}{(1-q^{-1})^2([\infty]_{q^{-1}}!)^3}+ \frac{3 q^{-(d+2)}}{(1-q^{-1})^2([\infty]_{q^{-1}}!)^3}(d+1 -2\sum_{i\geq 2} \frac{q^{-i}}{1-q^{-i}})\\
&-\frac{3q^{-(d+2)}}{(1-q^{-1})^2 ([\infty]_{q^{-1}}!)^3} (d-2 \sum_{i\geq 1}\left(\frac{q^{-i}}{1-q^{-i}}\right))+\frac{-6 q^{ -(d+2) \cdot q^{-1}} }{(1-q^{-1})^3 ([\infty]_{q^{-1}}!)^3}\\
&+ q^{-2d}(\cdots)\\
=& \frac{3 q^{-(d+2)}}{(1-q^{-1})^2([\infty]_{q^{-1}}!)^3}(2+1 )+\frac{-6 q^{ -(d+2)} (q^{-1}-q^{-1}) }{(1-q^{-1})^3 ([\infty]_{q^{-1}}!)^3}+ q^{-2d}(\cdots)\\
=& \frac{9 q^{ -(d+2)} }{(1-q^{-1})^2 ([\infty]_{q^{-1}}!)^3}+ q^{-2d}(\cdots)
\end{align*}
We finish the proof by the above three terms.
\qed
\subsection{Contribution of $\I(1)\I(d-1)$}
\begin{prop}\label{I1Id}
 We have
\ben
&&\I(1)\I(d-1) \\
&=& \frac{3q^{g(d)-1-(d-1)}}{(1-q^{-1})^2 [\infty]_{q^{-1}}!^3}  \biggl(\binom{d+2}{2}
- 3 \sum_{i\geq 1} \frac{q^{-i}}{(1-   q^{-i})^2}\biggr)\biggl|_{q^{\geq -2d+1}}.
\een
\end{prop}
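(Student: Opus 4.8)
The plan is to reduce the product to two ingredients that are already available: an exact closed form for $\I(1)$, and the description of $\I(d-1)$ furnished by Lemma \ref{Idwd} together with Theorem \ref{thm:Wd}. First I would evaluate $\I(1)$ directly from \eqref{Id}. Only partitions of $1$ occur, so exactly one of $\mu^1,\mu^2,\mu^3$ equals $(1)$ while the other two are empty; the three cyclic placements contribute equally and yield an overall factor $3$. Using $\W_{(1),\emptyset}=\W_{\emptyset,(1)}=s_{(1)}(q^\varrho)$, $\W_{\emptyset,\emptyset}=1$, and $\kappa_{(1)}=\kappa_{\emptyset}=0$, each placement contributes $s_{(1)}(q^\varrho)^2$. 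Since $s_{(1)}(q^\varrho)=h_1(q^\varrho)=q^{-1/2}/(1-q^{-1})$, this gives the exact identity
\be
\I(1) = -\,3\,s_{(1)}(q^\varrho)^2 = -\frac{3q^{-1}}{(1-q^{-1})^2},
\ee
equivalently the tail $\I(1)=-3\sum_{k\geq 1}k\,q^{-k}$.

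Next I would insert the known form of $\I(d-1)$. By Lemma \ref{Idwd} and Theorem \ref{thm:Wd}, $\I(d-1)$ agrees, down to its leading $\sim d$ coefficients, with $A:=(-1)^{d-1}q^{g(d-1)-1}W_{d-1}(q)$, where $W_{d-1}(q)=\frac{1}{[\infty]_{q^{-1}}!^3}\bigl(\binom{(d-1)+2}{2}-3\sum_{i\geq 1}q^{-i}/(1-q^{-i})^2\bigr)$ up to its own truncation. Multiplying $\I(1)$ by $A$ and collecting the explicit prefactors then produces the asserted closed form: the numerical factor $3$ and the pole $(1-q^{-1})^{-2}$ come from $\I(1)$; the factor $[\infty]_{q^{-1}}!^{-3}$ together with the bracket $\bigl(\binom{\cdot}{2}-3\sum\cdots\bigr)$ are inherited from $W_{d-1}$; and the power of $q$ assembles as $q^{-1}\cdot q^{g(d-1)-1}=q^{g(d-1)-2}=q^{g(d)-1-(d-1)}$, via the elementary identity $g(d-1)-2=g(d)-d$ for $g(d)=(d-1)(d-2)/2$. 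The overall sign is $(-1)\cdot(-1)^{d-1}=(-1)^d$, which should be tracked through the statement, and the binomial content entering is that of $W_{d-1}$, namely $\binom{d+1}{2}$.

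The genuinely delicate point — and the step I expect to be the main obstacle — is the truncation bookkeeping: certifying down to which power of $q$ the product actually equals the displayed closed form, since $\I(d-1)$ is reliable only on its leading window while $\I(1)$ is an honest infinite tail. I would control this with the elementary truncation lemma stated after \eqref{eqn:nW} (if $f=g|_{q^{\geq m}}$ then $q^{\pm 1}f$ and $(1-q^{-j})f$ are governed by the corresponding truncations of $g$). Writing $\I(d-1)=A|_{q^{\geq m_0}}$ with $m_0$ the reliability degree supplied by Lemma \ref{Idwd}, the error $A-\I(d-1)$ has all terms below degree $m_0$, so multiplying by $\I(1)$ (top degree $-1$) shows that $\I(1)\I(d-1)$ and $\I(1)A$ agree down to degree $g(d)-2d$. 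A short computation of $m_0$ then pins the truncation exponent appearing in the statement, and one checks that this reliability window comfortably covers the range $\delta\leq 2d-5$ targeted in Theorem \ref{thm:Mdelta2}. The remaining work — expanding $W_{d-1}$ and re-indexing the geometric sums — is routine.
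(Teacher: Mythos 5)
Your proposal is correct and follows essentially the same route as the paper: an exact evaluation of $\I(1)$, the identification of $\I(d-1)$ with $(-1)^{d-1}q^{g(d-1)-1}W_{d-1}(q)$ up to its truncation window, and the exponent identity $g(d-1)-2=g(d)-d$, followed by Theorem \ref{thm:Wd} applied to $W_{d-1}$. Your more careful bookkeeping in fact corrects two slips in the printed statement and proof: the overall sign should carry the factor $(-1)^d$ (the paper writes $\I(1)=q^{-1}W_1(q)$, dropping the $(-1)^1$ from \eqref{Id}), and the binomial inherited from $W_{d-1}$ is $\binom{d+1}{2}$ rather than the $\binom{d+2}{2}$ appearing in the proposition --- both of which are needed for consistency with Theorem \ref{F(d)2}.
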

\begin{proof}
By definition,
$$
\widetilde \W_{(1),(0)}(q)
= \frac{1}{(1-q^{-1})}, \quad \widetilde \W_{(0),(0)}(q)
= 1,
$$
we have
$$
W_1(q)=\sum_{d_1+d_2+d_3=1 } \widetilde \W_{(d_1),(d_2)}\widetilde \W_{(d_2),(d_3)}\widetilde \W_{(d_3),(d_1)}= \frac{3}{(1-q^{-1})^2},
$$
and
$$
\I(1) = q^{-1} W_1(q) = \frac{3q^{-1}}{(1-q^{-1})^2}.
$$
Hence
\ben
\I(1)\I(d-1) = \frac{3q^{g(d-1)-1-1}}{(1-q^{-1})^2} W_{d-1}(q) =
\frac{3q^{g(d)-1-(d-1)}}{(1-q^{-1})^2} W_{d-1}(q),
\een
together with Theorem \ref{thm:Wd} we finish the proof.
\end{proof}

\begin{thm}\label{F(d)2} If $d>3$, we have
\begin{align*}
&F^{1}(d)=F(d)= \frac{1}{[\infty]_{q^{-1}}!^3}\biggl[\biggl(\binom{d+2}{2}
- 3 \sum_{i\geq 1} \frac{q^{-i}}{(1-   q^{-i})^2}\biggr)-\\
& \frac{3\cdot q^{-d+1}(1-q^{-3})}{ (1-q^{-1})^2}\biggl(\binom{d+1}{2}
- \sum_{i\geq 1} \frac{3q^{-i}}{(1-   q^{-i})^2}
- \frac{3q^{-3}}{1-q^{-3}} \biggr)\biggr]
\biggl|_{q^{> -(2d-4)}}\nonumber
\end{align*}
\end{thm}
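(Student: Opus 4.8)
The plan is to expand $F(d)$ through the logarithm formula \eqref{eqn:F(d)inI(d)}, pin down the finitely many products $\I(d_1)\cdots\I(d_k)$ whose top terms reach into the target window, and then substitute the closed forms for those products that were established in Theorem \ref{Id2} and Proposition \ref{I1Id}. The starting point is the degree bookkeeping: since $\deg_q\I(d_i)=g(d_i)-1$ and $\prod_i\I(d_i)$ carries the common factor $(-1)^dq^{g(d)-1}$, the reduced series of $\I(d_1)\cdots\I(d_k)$ begins at relative $q$-degree
\[
\sum_{i}(g(d_i)-1)-(g(d)-1)=-\sum_{1\le i<j\le k}d_id_j=-\tfrac12\Bigl(d^2-\sum_i d_i^2\Bigr).
\]
Maximizing $\sum_i d_i^2$ over compositions of $d$ into $k\ge 2$ parts, one finds that $\sum_{i<j}d_id_j$ equals $d-1$ only for the two compositions $(1,d-1)$ and $(d-1,1)$, and is at least $2d-4$ for every other composition (the extremal competitors being $(2,d-2)$ with value $2d-4$ and $(1,1,d-2)$ with value $2d-3$). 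Hence, for $d>3$, the only contributions of relative degree strictly above $-(2d-4)$ come from the single-part term and from $(1,d-1),(d-1,1)$.

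Consequently, modulo terms of relative $q$-degree $\le -(2d-4)$, the coefficient $\tfrac{(-1)^{2-1}}{2}$ in \eqref{eqn:F(d)inI(d)} together with the multiplicity two of the pair $(1,d-1),(d-1,1)$ gives
\[
F(d)=\I(d)-\I(1)\I(d-1).
\]
I would then insert the expansion of $\I(d)$ from Theorem \ref{Id2}, valid down to $q^{>-2d}$, and of $\I(1)\I(d-1)$ from Proposition \ref{I1Id}, valid down to $q^{\ge -2d+1}$. Both windows are strictly finer than the required cutoff $-(2d-4)$, so truncating their difference at $q^{>-(2d-4)}$ is legitimate and loses none of the surviving terms identified above.

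The remaining step is purely algebraic. Writing $\Sigma=\sum_{i\ge1}\frac{q^{-i}}{(1-q^{-i})^2}$ and $X=\binom{d+1}{2}-3\Sigma$, and extracting the common factor $1/[\infty]_{q^{-1}}!^3$, the leading block $\binom{d+2}{2}-3\Sigma$ passes unchanged from $\I(d)$, while the two subleading blocks share the prefactor $\frac{3}{(1-q^{-1})^2}$ and differ only in the powers $q^{-d-2}$ (from $\I(d)$) versus $q^{-d+1}$ (from $\I(1)\I(d-1)$). Using $q^{-d-2}=q^{-d+1}\cdot q^{-3}$ and the identity
\[
q^{-3}\bigl(X+3\bigr)-X=-(1-q^{-3})X+3q^{-3},
\]
the two subleading blocks collapse to the single term $-\frac{3q^{-d+1}(1-q^{-3})}{(1-q^{-1})^2}\bigl(X-\frac{3q^{-3}}{1-q^{-3}}\bigr)$, which is exactly the asserted subleading block once one rewrites $3\Sigma=\sum_{i\ge1}\frac{3q^{-i}}{(1-q^{-i})^2}$. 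This reproduces the formula of Theorem \ref{F(d)2}.

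The main obstacle is not the final algebra but the truncation bookkeeping: one must verify that the window $q^{>-(2d-4)}$ is genuinely coarser than the windows in which Theorem \ref{Id2} and Proposition \ref{I1Id} hold, and that no product with $k\ge 3$ parts, nor the $k=2$ composition $(2,d-2)$, intrudes into the window. This is precisely where the hypothesis $d>3$ enters, since for $d=3$ the composition $(1,2)$ already sits on the boundary $\sum_{i<j}d_id_j=2d-4$, so the clean separation of $(1,d-1)$ from the discarded terms fails and the formula must be stated only for $d>3$.
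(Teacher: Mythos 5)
Your argument is correct and is essentially the paper's own proof: the same degree count $\deg_q(\I(d_1)\cdots\I(d_k))=\tfrac12(d^2-3d)-\sum_{i<j}d_id_j$ together with the observation that only $(d)$ and $(1,d-1),(d-1,1)$ have $\sum_{i<j}d_id_j<2d-4$ reduces $F(d)$ to $\I(d)-\I(1)\I(d-1)$ modulo $q^{-(2d-4)}(\cdots)$, after which Theorem \ref{Id2} and Proposition \ref{I1Id} (whose $\binom{d+2}{2}$ is a typo for $\binom{d+1}{2}$, as your computation implicitly corrects) give the stated formula by exactly the algebraic collapse you describe. The only point you leave unaddressed is the first equality $F^{1}(d)=F(d)$ on the window $q^{>-(2d-4)}$, which is precisely \eqref{F(d)=N2} for $d>3$ and is the other place the hypothesis $d>3$ is used.
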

\begin{proof} Recall
\ben
&& \deg_q(\I(d_1) \cdots \I(d_k))
= \half [(d_1^2-3d_1) + \cdots + (d_k^2-3d_k)] \\
& = &  \half(d^2-3d) - \sum_{1 \leq i < j \leq k} d_id_j
.
\een
If $\{d_1,\cdots,d_k\}\neq \{1,d-1\}$ or $\{d\}$, there must be some $d_s\leq d-2$,
$$
\sum_{1 \leq i < j \leq k} d_id_j \geq d_s\cdot (\sum_{i \neq s}d_i) = d_s(d-d_s)\geq 2d-4 .
$$
Hence we have
\ben
F(d) = \I(d) - \I(1)\I(d-1) + q^{-(2d-4)} (\cdots),
\een
together with formula \eqref{F(d)=N2}, Theorem \ref{Id2} and Proposition \ref{I1Id} we finish the proof.
\end{proof}

\subsection{Further observations on quadratic properties}
Our results in this and last sections have suggested
the following form of the generating series for the transformed GV invariants:
\be
\begin{split}
\sum_{\delta\geq 0} M^\delta_d q^\delta& = \sum_{ \delta \geq 0} C_{d,0}^\delta q^\delta
+ q^{d-1} \cdot \sum_{ \delta \geq 0} C_{d,1}^\delta q^\delta  \\
& + q^{2d-4} \cdot \sum_{ \delta \geq 0} C_{d,2}^\delta q^\delta
+ q^{3d-9} \cdot \sum_{ \delta \geq 0} C_{d,3}^\delta  q^\delta
+ \cdots,
\end{split}
\ee
where  for sufficiently large $d$,
\be
C_{d,j}^\delta
= a_j(\delta) \binom{d+2-j}{2}
+ b_j(\delta).
\ee
I.e. we have
$$
 M^\delta_d = C_{d,0}^{\delta} + C_{d,1}^{\delta-(d-1)} + C_{d,2}^{\delta-(2d-4)} + C_{d,3}^{\delta-(3d-9)} +\cdots  .
 $$
In the above we have established this for $C^\delta_{d,0}$ and $C^\delta_{d,1}$.
Using the table of $n^g_d$ for local $\PP^2$ in \cite{HKR}
we have checked that
\ben
C^0_{d,2} & = & 6 \cdot \binom{d}{2}, \qquad(d \geq 5), \\
C^1_{d,2} & = & 12\cdot \binom{d}{2} - 18, \qquad (d \geq 6), \\
C^2_{d,2} &= & 24\cdot \binom{d}{2} - 90, \qquad (d \geq 6), \\
C^3_{d,2} & = & 30 \cdot \binom{d}{2} -252, \qquad(d \geq 7), \\
C^4_{d,2} & = & 33\cdot \binom{d}{2} - 549, \qquad (d \geq 7), \\
C^5_{d,2} &= & -15\cdot \binom{d}{2} - 882, \qquad (d \geq 8).
\een
It will be interesting to have geometric interpretation
of such phenomenon.

\end{document}